\newcommand*{\affaddr}[1]{#1}
\newcommand*{\affmark}[1][*]{\textsuperscript{#1}}
\spnewtheorem{assumption}{Assumption}{\bfseries}{\itshape}
\def\R{{\rm I\!R}}
\def\argmin{\mathop{\rm arg\,min}}
\def\Argmin{\mathop{\rm Arg\,min}}
\def\Argmax{\mathop{\rm Arg\,max}}
\def\bd{{{\rm bdry}\,}}
\def\xfeas{x^\odot}
\def\d{{\rm dist}}
\def\X{{\mathbb{X}}}
\def\sgn{{\rm Sgn}}
\def\LO{{\cal LO}}
\def\AWO{{\cal AWO}}
\definecolor{blue}{rgb}{0,0,0}
\definecolor{blueblue}{rgb}{0,0,0}
\begin{document}
	
	\title{{\color{blue}Frank-Wolfe-type} methods for {\color{blue}a class of} nonconvex inequality-constrained problems \thanks{{\color{blueblue}Liaoyuan Zeng was supported partly by the National Natural Science Foundation of China (12201389).} Yongle Zhang was supported partly by the National Natural Science Foundation of China (11901414) and (11871359). Guoyin Li  was partially supported by  discovery projects from Australian Research Council (DP190100555 and DP210101025) and a UNSW-SJTU Collaborative Research Seed Grant (RG200965). Ting Kei Pong was supported in part by the Hong Kong Research Grants Council PolyU153004/18p.
}
	}
	
	
	\author{Liaoyuan Zeng \protect\affmark[1]  \and
		Yongle Zhang \protect\affmark[2] \and Guoyin Li \protect\affmark[3] \and Ting Kei Pong \protect\affmark[4] \and {\color{blueblue}Xiaozhou Wang} \protect\affmark[4]  
	}
	
	
	
	\institute{Liaoyuan Zeng \\  \email{zengly@zjut.edu.cn} \\\\
		\Letter $\;$   Yongle Zhang \\  \email{yongle-zhang@163.com} \\\\  Guoyin Li \\   \email{g.li@unsw.edu.au} \\\\
		Ting Kei Pong \\    \email{tk.pong@polyu.edu.hk} \\\\
{\color{blueblue} Xiaozhou Wang \\    \email{xzhou.wang@connect.polyu.hk} }\\\\
		\affaddr{ \affmark[1]College of Science, Zhejiang University of Technology, Hangzhou, Zhejiang, People's Republic of China.\\
			\affmark[2]School of Mathematical Sciences, Visual Computing and Virtual Reality Key Laboratory of Sichuan Province, Sichuan Normal University, Chengdu, Sichuan, People's Republic of China. \\
			\affmark[3]Department of Applied Mathematics, University of New South Wales, Sydney, Australia.\\
			\affmark[4]Department of Applied Mathematics, The Hong Kong Polytechnic University, Hong Kong, People's Republic of China.
		}\\
	}

	\date{Received: date / Accepted: date}

	\maketitle
	
	\begin{abstract}
		The Frank-Wolfe (FW) method, which implements efficient linear oracles that minimize linear approximations of the objective function over a {\it fixed} compact convex set, has recently received much attention in the optimization and machine learning literature. In this paper, we propose a new FW-type method for minimizing a smooth function over a compact set defined {\color{blue}as the level set of a single {\em difference-of-convex} function}, based on new {\em generalized} linear-optimization oracles (LO). We show that these LOs can be computed efficiently with {\it closed-form solutions} in some important optimization models that arise in compressed sensing and machine learning. In addition, under a mild strict feasibility condition, we establish the subsequential convergence of our nonconvex {\color{blue}FW-type} method. Since the feasible region of our generalized LO typically changes from iteration to iteration, our convergence analysis is {\it completely different} from those existing works in the literature on FW-type methods that deal with {\em fixed} feasible regions among subproblems. Finally, motivated by the away steps for accelerating FW-type {\color{blue}methods} for convex problems, we further design an {\it away-step oracle} to supplement our nonconvex {\color{blue}FW-type} method, and establish subsequential convergence of this variant. Numerical results on {\color{blue}the matrix completion problem {\color{blueblue}with standard datasets} are presented to demonstrate the efficiency of the proposed FW-type method and its away-step variant.}

		\keywords{Nonconvex constraint sets \and Frank-Wolfe variants \and generalized linear-optimization oracles \and away-step oracles}
	\end{abstract}

\section{Introduction}

Many optimization problems that arise in application fields such as statistics, computer science and data science can be cast into constrained optimization problems that minimize smooth functions over compact sets:
\begin{equation}\label{P00}
  \begin{array}{rl}
\min\limits_{x\in\X} & f(x) \ \ \ \ {\rm s.t.} \ \ x \in C,
  \end{array}
\end{equation}
where $\X$ is a finite-dimensional Euclidean space, $f:\X\rightarrow\R$ is continuously differentiable and $C$ is a nonempty compact set in $\X$.
When projections onto $C$ can be efficiently computed, the classical gradient projection method \cite{Bertsekas99,Goldstein64,LevitinPolyak66} and its variants are usually the prominent choices of algorithms for solving \eqref{P00}, due to their ease of implementation and scalability.

Projections onto $C$, however, are not necessarily easy to compute; see for example \cite{FrGM17,HaJN15} for some concrete instances that arise in applications.
In this case, scalable first-order methods that do not involve projections may be employed. When the $C$ in \eqref{P00} is \emph{convex}, one popular class of such algorithms is the Frank-Wolfe (FW) method (also called the conditional gradient method) \cite{FrankWolfe56} and its variants. Unlike the
gradient projection methods which require efficient projections onto $C$, FW method, in each iteration, makes use of a linear approximation of $f$, and moves towards a minimizer of this linear function over $C$ along a straight line to generate {\color{blue}the next iterate} in $C$. In particular,
FW method uses a {\em linear-optimization oracle} in each iteration; these {\color{blue}kinds} of oracles can be
much less computationally expensive than projecting onto $C$ in many applications \cite{FrGM17,GarberHazan16,Jaggi13}. Due to their low iteration costs and ease of implementations,  FW method
and its variants have found applications in machine learning and have received much renewed interest in recent years \cite{FrGM17,GarberHazan16,Jaggi13,LaZh16,Lacoste15,Pedregosa20}. For example, when $f$ is also convex and satisfies certain curvature conditions, \cite{Jaggi13} established
the $O(1/k)$ complexity of FW-type methods for \eqref{P00} and presented their powerful applications for
solving sparse optimization models. In \cite{GarberHazan16,Lacoste15,Pedregosa20}, linear convergence results of some FW-type methods
were established under suitable conditions such as strong convexity of
$f$ or $C$ in \eqref{P00}. 
More recently, refinements on the FW method were presented by incorporating
the so-called ``in-face" directions \cite{FrGM17}, which extended the idea of ``{\color{blue}away steps}" proposed earlier in \cite{Guelat86,Wolfe}.
{\color{blue}For the recent development} of FW-type methods, we refer the readers to \cite{BRZ21} for a {\color{blue}nice} survey. 

The previous discussions were on FW-type methods for \eqref{P00} when $C$ is convex.
In the case when $C$ in \eqref{P00} is not convex, the study of FW-type methods are much more limited. Indeed, when $C$ is not convex, a notable difficulty is that one may move \emph{outside of $C$} when moving towards a minimizer of the linear-optimization oracle in an iteration of the FW method.
Despite this difficulty, some important contributions along this direction were given in \cite{RT13,BalashovPolyak20}. Specifically, in \cite{RT13},  FW-type methods have been extended to some optimization models for sparse principal component analysis, whose $C=\{x \in \mathbb{R}^n: \|x\|=1, \, \|x\|_0 \le r\}$, where $\|x\|_0$ denotes the cardinality of $x$ and $r>0$. More recently, \cite{BalashovPolyak20} further discussed how FW-type methods can be developed when $C$ is a sphere or more generally a smooth manifold. Interestingly, the feasible region $C$ considered in \cite{RT13,BalashovPolyak20} {\color{blue}has an} empty interior.

In this paper, different from \cite{RT13,BalashovPolyak20}, we are interested in developing {\color{blue}FW-type} methods for \eqref{P00} when $C$ is nonconvex and has (possibly) nonempty interior. Specifically, we consider the following nonconvex optimization problem:
 \begin{equation}\label{P0}
  \begin{array}{rl}
\min\limits_{x\in\X} & f(x) \ \ \ \ {\rm s.t.} \ \ P_1(x) - P_2(x) \leq \sigma,
  \end{array}
\end{equation}
 where $f:\X\rightarrow\R$ is continuously differentiable,
 $P_1:\X\to \R$ and $P_2:\X\rightarrow\R$ are {\color{blue} real-valued} convex functions, $\sigma >0$ and the feasible set $\mathcal{F} :=\{x\in \X:\; P_1(x) - P_2(x) \leq \sigma\}$ is nonempty and compact.

It is easy to see that \eqref{P0} is a special case of \eqref{P00} with $C=\{x: P_1(x)-P_2(x) \le \sigma\}$.
Notice that, the model \eqref{P0} covers, for example, sparse optimization models whose feasible region can be described as $\{x \in \mathbb{R}^n: P(x) \le r\}$, with $P$ being a weighted difference of $\ell_1$ and $\ell_2$ norms, and $r>0$. For more optimization models of this form, see Section~\ref{sec:LO} below. {\color{blue}Our method} is an FW-type method in the sense that we make use of a (generalized) linear-optimization oracle (see Definition~\ref{LOdef}) where we linearize \emph{both} the objective function $f$ and the concave part, $-P_2$, of the constraint function. We invoke this linear-optimization oracle to obtain a search direction. {\color{blue} We then }use a line search procedure to construct the next iterate and {\color{blue} \emph{maintain its feasibility}}. We would like to point out that, when $P_2\equiv 0$, our (generalized) linear-optimization oracle reduces to the classical linear-optimization oracle used in the classical FW method. As a result, similar to {\color{blue}the classical FW method} for \eqref{P00} with convex $C$, {\color{blue}our method} also does not require projections onto $C$.
On the other hand, a notable difference is that, the feasible regions of the linear-optimization oracles in {\color{blue}our method} \emph{evolve} as the algorithm progresses. This is opposed to the existing {\color{blue}FW-type} methods in the literature where the feasible regions of the linear-optimization oracles are fixed as the feasible region of the original problem. As a result, our convergence analysis is \emph{completely different} from those in the existing literature on FW-type methods.

{\color{blue} It is also worth noting that our model problem is closely related to the difference-of-convex (DC) optimization problems  with DC constraints. In particular, note that any twice continuously differentiable function over a compact set is a DC function. In this case, our model problem can be regarded as a special form of  DC optimization problems with DC constraints considered, for example, in \cite{PRA17}.  DC optimization problems form a large class of nonconvex optimization problems, and have been  studied extensively in the literature \cite{LeDinh14,LeDinh18,PRA17,R2014}. One of the most prominent and widely used algorithms for solving the DC optimization problems is the so-called difference-of-convex algorithm (DCA) and its variants \cite{LeDinh18}. Unlike {\color{blue}FW-type} methods, these algorithms usually utilize the majorization-minimization numerical strategy and rely on efficient computability of convex subproblems with \emph{nonlinear} objective functions. As a result, the analysis and targeted applications of the proposed methods here are also {\it drastically different} from the DCA and its variants in the literature.}

The organization and contribution of this paper are as follows:
\begin{itemize}
\item[{\rm (1)}] After {\color{blue}a quick review of} {\color{blue}notation, basic mathematical tools and the classic FW algorithm for \eqref{P00} with convex sets $C $ in Section~\ref{sec2}, we discuss our motivations and ideas for our {\color{blue}FW-type} algorithms for \eqref{P0}.}
\item[{{\color{blue}\rm (2)}}]{\color{blue}We} formulate the (new) linear-optimization oracle of {\color{blue}the proposed method} in Section~\ref{sec:LO}. We also discuss how these linear-optimization oracle problems can be efficiently solved for several important nonconvex optimization models of the form \eqref{P0} arising from compressed sensing and matrix completion.
\item[{\rm (3)}] In Section~\ref{sec4}, we discuss optimality conditions and define a stationarity measure for \eqref{P0}, which paves the way for the convergence analysis of our new {\color{blue}FW-type} methods later.
\item[{\rm (4)}] In Section~\ref{sec:convergence}, we present a new {\color{blue}FW-type} method for solving \eqref{P0}. Under a suitable strict feasibility condition, we establish that the sequence generated by the proposed method is bounded and any accumulation point is a stationary point of \eqref{P0}. In the case where, additionally, the convex part $P_1$ in the constraint is strongly convex and $f$ is Lipschitz differentiable with nonvanishing gradients on ${\cal F}$, we obtain a $o(1/k)$ complexity in terms of the stationarity measure defined in Section~\ref{sec4}.
\item[{\rm (5)}] In Section~\ref{sec6}, motivated by the {\color{blue}improved empirical performances upon incorporating ``away steps" in} FW method in the convex setting, we further introduce a new {\color{blue}FW-type} method with  ``away steps" for \eqref{P0}, and establish its subsequential convergence under suitable conditions.
\item[{\rm (6)}] {\color{blue} Finally, in Section ~\ref{sec7}, we apply the proposed FW-type method and its ``away-step" variant to solve large matrix completion (MC) {\color{blueblue}problems on standard datasets}. We compare our approaches with the In-face extended Frank-Wolfe method in \cite{FrGM17}, which solves  a convex MC model.}
\end{itemize}

\section{Notation and preliminaries}\label{sec2}
In this paper, we use $ \X $ to denote a finite-dimensional Euclidean space. We denote by $ \langle \cdot, \cdot\rangle $ the inner product on $ \X $ and $ \|\cdot\| $ the associated norm. Next, we let $ \R^n $ denote the Euclidean space of dimension $ n $, and $ \R^{m\times n} $ denote the space of all $ m\times n $ matrices. Moreover, the space of $ n\times n $ symmetric matrices will be denoted by $ S^{n} $ and the cone of $ n\times n $ positive semidefinite matrices will be denoted by $ S_+^{n} $. Finally, we let $ \mathbb N_+ $ denote the set of nonnegative integers.

For a set $ S\subseteq\X $ and an $ x\in\X $, we define the distance from $ x $ to $ S $ as $ {\rm dist}(x, S):= \inf\{\|x-y\|:\,y\in S\} $. The convex hull of $ S $ is denoted by $ {\rm conv}(S) $, and $ \bd S $ is the boundary of $ S $. If $ S $ is a finite set, we denote by $ \lvert S\rvert $ its cardinal number. We use $ B(x, r) $ to denote the closed ball with center $ x $ and radius $ r >0 $, i.e., $  B(x, r) = \{y\in\X: \|x-y\| \le r\} $.

We say that an extended-real-valued function $ f:\X\rightarrow [-\infty, \infty] $ is proper if its effective domain $ {\rm dom}\, f:=\{x\in\X:\, f(x)<\infty \} $ is not empty and $ f(x)>-\infty $ for every $ x\in\X $. A proper function is said to be closed if it is lower semicontinuous. The limiting subdifferential of a proper closed function $ h $ at $ \bar x\in{\rm dom}\, h $ is given as
\[
\partial h(\bar x):= \left\{\upsilon\in\X:\; \exists x^k\overset{h}\rightarrow \bar x \text{ and } \upsilon^k \in \widehat{\partial} h(x^k) \text{ with } \upsilon^k \rightarrow \upsilon\right\},
\]
where $ x^k\overset{h}\rightarrow \bar x $ means $ x^k\rightarrow \bar x $ and $ h(x^k)\rightarrow h(\bar x) $; here, $\widehat{\partial} h$ is the so-called regular (or Fr\'{e}chet) subdifferential, which, for any $x\in {\rm dom\,} h$, is given by
\[
\widehat{\partial} h(x):=\left\{\upsilon\in\X :\; \liminf\limits_{z\rightarrow x,z\neq x}\frac{h(z)-h(x)-\langle \upsilon,z-x\rangle }{\|z-x\|}\ge 0 \right\}.
\]
By convention, we set $\widehat{\partial}h(x)= \partial h(x) = \emptyset$ if $ x\notin {\rm dom}\, h $. It is known that $\partial h(x) = \{\nabla h(x)\}$ if $h$ is continuously differentiable at $x$; see \cite[Exercise~8.8(b)]{RoWe98}. Moreover, when $h$ is proper closed and convex, $\partial h$ reduces to the classical subdifferential in convex analysis; see \cite[Proposition~8.12]{RoWe98}.
Next, for a locally Lipschitz continuous function $ h $, we define its Clarke subdifferential at $ \bar x$ as
\[
\partial^\circ h(\bar x):= \left\{\upsilon\in\X :\; \limsup_{x\rightarrow \bar x, t\downarrow 0} \frac{{\color{blue}h}(x+tw)-{\color{blue}h}(x)}{t}\geq \langle \upsilon, w\rangle \text{ for all } w\in\X\right\};
\]
it holds that $\widehat{\partial}h(\bar x) \subseteq \partial h(\bar x)\subseteq \partial^\circ h(\bar x)$ {\color{blue} (see \cite[Theorem~5.2.22]{BoZh04}); such an $ h $ is said to be \emph{regular} at $ \bar x $ if $ \widehat{\partial} h(\bar x) = \partial^\circ h(\bar x) $.}

We now recall a suitable constraint qualification on the constraint set of \eqref{P0} and present the notion of stationary points of \eqref{P0}.
\begin{definition}[{gMFCQ}]
For \eqref{P0}, we say that the generalized Mangasarian-Fromovitz constraint qualification (gMFCQ) holds at an $x^*\in\mathcal{F}$ if the following implication holds:
\[
{\rm If }\; P_1(x^*) - P_2(x^*) = \sigma,  \;{\rm then }\; 0\not\in\partial^\circ(P_1 - P_2)(x^*).
\]
\end{definition}
Note that gMFCQ reduces to the standard MFCQ when $P_1-P_2$ is smooth.

\begin{definition}[{Stationary point}]\label{Stationary}
We say that an $x^*\in\X$ is a stationary point of \eqref{P0} if there exists $\lambda_*\in\R_+$ such that $(x^*,\lambda_*)$ satisfies
\begin{enumerate}[{\rm (i)}]
	\item $0\in \nabla f(x^*) + \lambda_*(\partial P_1(x^*) - \partial P_2(x^*))$;
	\item $\lambda_*(P_1(x^*) - P_2(x^*) - \sigma)=0$, and $P_1(x^*) - P_2(x^*) \leq \sigma$.
\end{enumerate}
\end{definition}
We next deduce that any local minimizer of \eqref{P0} is a stationary point under gMFCQ.
\begin{proposition}\label{loc-sta}
Consider \eqref{P0}. If the gMFCQ holds at every point in $\mathcal{F}$, then any local minimizer of \eqref{P0} is a stationary point of \eqref{P0}.
\end{proposition}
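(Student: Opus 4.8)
The plan is to argue by cases according to whether the constraint is active at the local minimizer, treating the inactive case by a direct interior argument and the active case via a nonsmooth multiplier rule together with gMFCQ. Throughout, I would write $g:=P_1-P_2$; since $P_1,P_2$ are real-valued convex functions, $g$ is locally Lipschitz (indeed continuous) on $\X$, and $\partial^\circ f(x)=\{\nabla f(x)\}$ because $f$ is $C^1$.

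Let $x^*$ be a local minimizer of \eqref{P0}, so $x^*\in\mathcal{F}$ and $g(x^*)\le\sigma$. If $g(x^*)<\sigma$, then by continuity of $g$ there is a neighbourhood of $x^*$ contained in $\mathcal{F}$, so $x^*$ is an unconstrained local minimizer of the smooth $f$ near $x^*$; hence $\nabla f(x^*)=0$, and then $(x^*,\lambda_*)$ with $\lambda_*=0$ satisfies (i)--(ii) of Definition~\ref{Stationary}.

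The main case is $g(x^*)=\sigma$. Here I would apply the nonsmooth Fritz--John multiplier rule for the locally Lipschitz problem $\min f$ s.t.\ $g\le\sigma$: there exist $\mu_0,\mu_1\ge0$, not both zero, with $0\in\mu_0\nabla f(x^*)+\mu_1\partial^\circ g(x^*)$ and $\mu_1(g(x^*)-\sigma)=0$. One then checks $\mu_0>0$: if $\mu_0=0$, then $\mu_1>0$, so $0\in\partial^\circ g(x^*)=\partial^\circ(P_1-P_2)(x^*)$; but $P_1(x^*)-P_2(x^*)=\sigma$, so this contradicts gMFCQ (which holds at $x^*$ by hypothesis). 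Thus $\mu_0>0$, and setting $\lambda_*:=\mu_1/\mu_0\ge0$ gives $0\in\nabla f(x^*)+\lambda_*\,\partial^\circ(P_1-P_2)(x^*)$ together with $\lambda_*(P_1(x^*)-P_2(x^*)-\sigma)=0$.

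It then remains to pass from $\partial^\circ(P_1-P_2)$ to $\partial P_1-\partial P_2$: by the Clarke sum rule $\partial^\circ(P_1-P_2)(x^*)\subseteq\partial^\circ P_1(x^*)+\partial^\circ(-P_2)(x^*)$, while $\partial^\circ P_1(x^*)=\partial P_1(x^*)$ and $\partial^\circ(-P_2)(x^*)=-\partial^\circ P_2(x^*)=-\partial P_2(x^*)$, since the Clarke subdifferential of a convex function is its convex subdifferential and $\partial^\circ(-h)=-\partial^\circ h$. Hence $0\in\nabla f(x^*)+\lambda_*(\partial P_1(x^*)-\partial P_2(x^*))$, which is (i), and together with $g(x^*)\le\sigma$ this gives (ii). I expect the only delicate points to be bookkeeping: invoking the correct form of the multiplier rule for a single Lipschitz inequality constraint, and noting that, because $-P_2$ is concave and need not be Clarke regular, only the inclusion $\partial^\circ(P_1-P_2)\subseteq\partial P_1-\partial P_2$ is available — which is, however, exactly what Definition~\ref{Stationary} requires.
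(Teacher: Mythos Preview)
Your proof is correct and follows the same overall architecture as the paper: split into the inactive case (where $\nabla f(x^*)=0$ and $\lambda_*=0$) and the active case, use gMFCQ to rule out a degenerate multiplier, and then apply Clarke calculus to pass from $\partial^\circ(P_1-P_2)$ to $\partial P_1-\partial P_2$. The one genuine difference is the entry point in the active case. You invoke the nonsmooth Fritz--John multiplier rule directly and then use gMFCQ to force $\mu_0>0$; the paper instead starts from the Fermat condition $0\in\nabla f(x^*)+\mathcal{N}_{\mathcal F}(x^*)$ and computes the normal cone of the sublevel set $\{g\le\sigma\}$ as $\bigcup_{\lambda\ge0}\lambda\,\partial^\circ g(x^*)$, which is exactly where gMFCQ ($0\notin\partial^\circ g(x^*)$) is needed for that normal-cone formula to hold. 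Both routes are standard and equivalent in strength; yours is slightly more self-contained because it avoids the separate normal-cone representation lemma, while the paper's version makes the role of gMFCQ as a constraint qualification for normal-cone calculus more explicit. Your closing remark that only the inclusion $\partial^\circ(P_1-P_2)\subseteq\partial P_1-\partial P_2$ is available (and sufficient) is exactly right and matches the paper's use of the Clarke sum rule.
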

\begin{proof}
For any local minimizer $x^*$ of \eqref{P0}, we can deduce from \cite[Theorem~10.1]{RoWe98} that
\begin{equation}\label{subdif}
0\in\partial(f + \delta_{[P_1 - P_2\leq\sigma]})(x^*),
\end{equation}
where $ \delta_S $ is the indicator function of the set $ S $. Below, we consider two cases.

{\bf Case 1:} $P_1(x^*) - P_2(x^*) < \sigma$. As $P_1$ and $P_2$ are real-valued convex functions, they are also continuous. By the continuity of $P_1$ and $P_2$ and \eqref{subdif}, we have that $\nabla f(x^*) = 0$. Thus $x^*$ is a stationary point of \eqref{P0} (with $\lambda_* = 0$ in Definition~\ref{Stationary}).

{\bf Case 2:} $P_1(x^*) - P_2(x^*) = \sigma$. Then we have that
\[
\begin{aligned}
0&\in\partial(f + \delta_{[P_1 - P_2\leq\sigma]})(x^*) \overset{\rm (a)} = \nabla f(x^*) + \mathcal{N}_{[P_1 - P_2\leq\sigma]}(x^*)\\
&\overset{\rm (b)}\subseteq \nabla f(x^*) + \bigcup\limits_{\lambda\ge0}\lambda\partial^\circ(P_1 - P_2)(x^*)\overset{\rm (c)}\subseteq \nabla f(x^*) + \bigcup\limits_{\lambda\ge0}\lambda(\partial P_1(x^*) - \partial P_2(x^*)).
\end{aligned}
\]
where $ \mathcal{N}_S(x):= \partial \delta_S(x) $ is the limiting normal cone of the set $ S $ at $ x $, and (a) holds in view of \cite[Exercise~8.8]{RoWe98} and the smoothness of $ f $, (b) follows from \cite[Theorem~5.2.22]{BoZh04}, the first corollary to \cite[Theorem~2.4.7]{Cl90} and the fact that $0\not\in\partial^\circ(P_1 - P_2)(x^*)$ (thanks to the gMFCQ), (c) holds because of \cite[Proposition~2.3.3]{Cl90}, \cite[Proposition~2.3.1]{Cl90} and \cite[Theorem~6.2.2]{BoLe06}. Then, there exists $\lambda_*\ge0$ such that $0\in \nabla f(x^*) + \lambda_*(\partial P_1(x^*) - \partial P_2(x^*))$. Noticing that $P_1(x^*) - P_2(x^*) = \sigma$, we also have $\lambda_*(P_1(x^*) - P_2(x^*) - \sigma)=0$. Therefore, $x^*$ is a stationary point of \eqref{P0}.
\qed \end{proof}

{\color{blue} Next,} we state two lemmas that will be used subsequently in our discussion of stationarity measure in Section~\ref{sec4} and our convergence analysis in Sections~\ref{sec:convergence} and \ref{sec6}. The first lemma is due to Robinson \cite{Rob75} and concerns error bounds for the so-called ${\cal K}-$convex functions: Given a closed convex cone ${\cal K}\subseteq \R^m$, we say that $g: \X\rightarrow \R^m$ is $\mathcal{K}-$convex if
\[
\lambda g(x) + (1 - \lambda)g(y)\in g(\lambda x + (1 - \lambda)y) +\mathcal{K},\ \ \forall  x, y\in \X, \ \lambda\in [0,1].
\]
\begin{lemma}\label{ErrorBounded}
Let $\mathcal{K}\subseteq \R^m$ be a closed convex cone and $g:\X\to \R^m$
be a ${\cal K}-$convex function. Let $\Omega := \{x\in \X:\; 0 \in g(x) + {\cal K}\}$ and suppose there exist $x^s\in \Omega$ and $\delta > 0$ such that $B(0,\delta)\subseteq g(x^s) + \mathcal{K}$.
Then
\[
\d(x,\Omega)\leq \frac{\|x - x^s\|}{\delta}\d(0, g(x) + \mathcal{K}), \ \forall x\in \X.
\]
\end{lemma}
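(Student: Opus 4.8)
The plan is to produce, for an arbitrary $x\in\X$, an explicit point of $\Omega$ near $x$ by moving from $x$ towards the ``Slater point'' $x^s$ along a straight segment, using $\mathcal{K}$-convexity to carry the quantitative interiority $B(0,\delta)\subseteq g(x^s)+\mathcal{K}$ over to the image of that segment. Concretely, for $t\in[0,1]$ I set $x_t := (1-t)x + t x^s$. Applying the definition of $\mathcal{K}$-convexity with weights $1-t$ and $t$ gives $(1-t)g(x) + t g(x^s)\in g(x_t)+\mathcal{K}$; since $\mathcal{K}$ is a convex cone (so that $\mathcal{K}+\mathcal{K}=\mathcal{K}$ and $s\mathcal{K}=\mathcal{K}$ for every $s>0$), this upgrades to the set inclusion
\[
(1-t)\bigl(g(x)+\mathcal{K}\bigr) + t\bigl(g(x^s)+\mathcal{K}\bigr)\ \subseteq\ g(x_t)+\mathcal{K}.
\]

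Next I would let $y$ be the metric projection of $0$ onto $g(x)+\mathcal{K}$ --- a nonempty closed convex set --- so that $y\in g(x)+\mathcal{K}$ and $\|y\|=\d(0,g(x)+\mathcal{K})=:d$. Scaling the inclusion $B(0,\delta)\subseteq g(x^s)+\mathcal{K}$ by $t$ gives $B(0,t\delta)\subseteq t\bigl(g(x^s)+\mathcal{K}\bigr)$, so whenever $(1-t)d\le t\delta$ one has $-(1-t)y\in B(0,t\delta)\subseteq t\bigl(g(x^s)+\mathcal{K}\bigr)$, hence
\[
0 = (1-t)y + \bigl(-(1-t)y\bigr)\ \in\ (1-t)\bigl(g(x)+\mathcal{K}\bigr) + t\bigl(g(x^s)+\mathcal{K}\bigr)\ \subseteq\ g(x_t)+\mathcal{K},
\]
that is, $x_t\in\Omega$. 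The constraint $(1-t)d\le t\delta$ is equivalent to $t\ge d/(\delta+d)$, so I would take the smallest admissible step $t_*:=d/(\delta+d)\in[0,1)$; then $x_{t_*}\in\Omega$ and
\[
\d(x,\Omega)\ \le\ \|x-x_{t_*}\|\ =\ t_*\|x-x^s\|\ =\ \frac{d}{\delta+d}\,\|x-x^s\|\ \le\ \frac{\|x-x^s\|}{\delta}\,d,
\]
which is exactly the claimed bound (the last inequality just uses $d/(\delta+d)\le d/\delta$); the case $x\in\Omega$ is subsumed because then $d=0$ and $t_*=0$.

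There is no deep obstacle here; the work is essentially careful bookkeeping with cones. The points that have to be handled correctly are: $g(x)+\mathcal{K}$ is closed (a translate of the closed set $\mathcal{K}$), so the projection $y$ is attained and $\|y\|=d$; the passage from $\mathcal{K}$-convexity of $g$ to the displayed set inclusion genuinely uses $\mathcal{K}+\mathcal{K}=\mathcal{K}$ and positive homogeneity of $\mathcal{K}$; and the value $t_*=d/(\delta+d)$ is precisely the least feasible step, which is what yields the sharp constant $1/\delta$ rather than something weaker. If one prefers not to invoke attainment of the projection, the argument runs verbatim with any $y\in g(x)+\mathcal{K}$ satisfying $\|y\|\le d+\varepsilon$ followed by a passage $\varepsilon\downarrow 0$.
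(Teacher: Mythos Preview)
Your argument is correct and is essentially the classical Robinson argument: interpolate along the segment from $x$ to the Slater point $x^s$, use $\mathcal{K}$-convexity plus the cone identities $\mathcal{K}+\mathcal{K}=\mathcal{K}$ and $t\mathcal{K}=\mathcal{K}$ (for $t>0$) to show $0\in g(x_t)+\mathcal{K}$ once $(1-t)d\le t\delta$, and then read off the distance bound from $t_*=d/(\delta+d)\le d/\delta$. The paper itself does not supply a proof of this lemma; it simply attributes the result to Robinson~\cite{Rob75} and states it for later use, so there is no ``paper's own proof'' to compare against beyond the citation. Your write-up is exactly the standard proof one finds behind that citation.
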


Our next lemma concerns the convergence of descent algorithms with line-search based on the Armijo rule, which will be used in our convergence analysis. The proof is standard and follows a similar idea as in \cite[Proposition~1.2.1]{Bertsekas99}. Here we include the proof for the ease of readers.
\begin{lemma}\label{Armijolemma}
Let $ \Gamma \subseteq \X $ be a compact set. Suppose that $ f:\; \X\rightarrow \R $ is continuously differentiable on an open set containing $\Gamma$. Let $ c\in(0, 1) $, $ \eta\in(0, 1) $ and $ \{\alpha_k^{0}\} $ satisfy $ 0<\inf_k \alpha_k^0\leq \sup_k \alpha_k^0< \infty $. Let $ x^0\in\Gamma $, $\{\alpha_k\}$ be a positive sequence, and $\{x^k\}$ and $ \{d^k\} $ be bounded sequences such that the following conditions hold for each $ k\in\mathbb N_+ $:
	\begin{enumerate}[{\rm (i)}]
		\item $\langle \nabla f(x^k), d^k\rangle < 0$.
		\item $ \alpha_k $ is computed via Armijo line search with backtracking from $\alpha^0_k$, i.e., $\alpha_k = \alpha^0_k\eta^{j_k}$ with
		\begin{equation}\label{Armijols}
		\!\!\!\!\!\!j_k\!:=\! \argmin\left\{j\!\in\! \mathbb{N}_+\!\!:\! \; f(x^k\!+\!\alpha_k^0\eta^jd^k)\!\leq\! f(x^k) \!+\! c\alpha_k^0\eta^j\langle\nabla f(x^k),d^k\rangle\right\}\!.\!\!\!\!  \ \footnote{Here and throughout this paper, we use $\Argmin$ to denote the set of minimizers, and write $\argmin$ when the set of minimizers is a singleton.}
		\end{equation}
		\item $ x^{k+1}\in\Gamma $ and satisfies $ f(x^{k+1})\leq f(x^k+\alpha_kd^k) $.
	\end{enumerate}
   Then, it holds that $ \lim_{k\to\infty} \langle\nabla f(x^k),d^k\rangle =0 $.
\end{lemma}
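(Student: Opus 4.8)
The plan is to follow the classical argument for convergence of Armijo-based descent methods as in \cite[Proposition~1.2.1]{Bertsekas99}, adapted to the present setting in which the stepsizes, iterates and directions are allowed to come from different sequences linked only through (i)--(iii), and in which smoothness of $f$ is only assumed on an open set containing the compact $\Gamma$.

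First I would establish summability of the achieved decrease. Combining (ii) and (iii), each iterate satisfies
\[
f(x^{k+1}) \le f(x^k) + c\,\alpha_k\langle\nabla f(x^k),d^k\rangle,
\]
so, using (i), $\{f(x^k)\}$ is nonincreasing; since $x^k\in\Gamma$ for all $k\ge 1$ by (iii), $\Gamma$ is compact and $f$ is continuous on it, the sequence $\{f(x^k)\}$ is bounded below, hence convergent. Telescoping the displayed inequality then gives $\sum_{k}\alpha_k\bigl(-\langle\nabla f(x^k),d^k\rangle\bigr)<\infty$, and in particular $\alpha_k\langle\nabla f(x^k),d^k\rangle\to 0$.

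Next I would argue by contradiction: suppose $\langle\nabla f(x^k),d^k\rangle\not\to 0$, so that there exist $\varepsilon>0$ and a subsequence $\{k_j\}$ with $\langle\nabla f(x^{k_j}),d^{k_j}\rangle\le -\varepsilon$ for all $j$. From $\alpha_{k_j}\langle\nabla f(x^{k_j}),d^{k_j}\rangle\to 0$ together with $-\langle\nabla f(x^{k_j}),d^{k_j}\rangle\ge \varepsilon$ we get $\alpha_{k_j}\to 0$. Since $\inf_k\alpha_k^0>0$, for $j$ large we have $\alpha_{k_j}<\alpha_{k_j}^0$, i.e. $j_{k_j}\ge 1$; by the minimality in \eqref{Armijols} the Armijo inequality must fail at exponent $j_{k_j}-1$, and dividing that failed inequality by $\beta_j:=\alpha_{k_j}^0\eta^{j_{k_j}-1}=\alpha_{k_j}/\eta\ (\to 0)$ yields
\[
\frac{f(x^{k_j}+\beta_j d^{k_j})-f(x^{k_j})}{\beta_j} > c\,\langle\nabla f(x^{k_j}),d^{k_j}\rangle .
\]

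Finally I would pass to the limit. Using boundedness of $\{x^{k_j}\}$ (it lies in the compact $\Gamma$) and of $\{d^{k_j}\}$, I extract a further subsequence along which $x^{k_j}\to\bar x\in\Gamma$ and $d^{k_j}\to\bar d$. Because $\beta_j\to 0$ and $\{d^{k_j}\}$ is bounded, the segment $[x^{k_j},\,x^{k_j}+\beta_j d^{k_j}]$ eventually lies in the open set on which $f$ is continuously differentiable, so the mean value theorem gives $t_j\in(0,1)$ with the left-hand side above equal to $\langle\nabla f(x^{k_j}+t_j\beta_j d^{k_j}),d^{k_j}\rangle$, and $x^{k_j}+t_j\beta_j d^{k_j}\to\bar x$. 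Continuity of $\nabla f$ at $\bar x$ then gives, in the limit, $\langle\nabla f(\bar x),\bar d\rangle\ge c\,\langle\nabla f(\bar x),\bar d\rangle$, hence $(1-c)\langle\nabla f(\bar x),\bar d\rangle\ge 0$, i.e. $\langle\nabla f(\bar x),\bar d\rangle\ge 0$; whereas taking limits in $\langle\nabla f(x^{k_j}),d^{k_j}\rangle\le-\varepsilon$ gives $\langle\nabla f(\bar x),\bar d\rangle\le-\varepsilon<0$, a contradiction. The argument is essentially routine; the only points needing care are the bookkeeping ones — that compactness of $\Gamma$ (entering via (iii)) is exactly what bounds $\{f(x^k)\}$ from below, and that $\beta_j\to 0$ keeps the mean-value segments inside the region of smoothness so that $\nabla f$ is defined and continuous along them.
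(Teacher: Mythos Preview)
Your proof is correct and follows essentially the same approach as the paper's: both first establish $\alpha_k\langle\nabla f(x^k),d^k\rangle\to 0$ via telescoping and compactness of $\Gamma$, then use the failed Armijo inequality at the previous backtracking step along a subsequence with $\alpha_{k_j}\to 0$ to obtain $(1-c)\langle\nabla f(\bar x),\bar d\rangle\ge 0$ in the limit. The only cosmetic differences are that the paper frames the second half as ``every convergent subsequence of $\{\langle\nabla f(x^k),d^k\rangle\}$ has limit zero'' with a case split on $\alpha_*>0$ versus $\alpha_*=0$, whereas you argue by contradiction and make the mean-value-theorem step explicit.
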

\begin{proof}
According to \eqref{Armijols} and item (iii), we have for every $k\in\mathbb{N}_+ $,
\[
-c\alpha_k\langle \nabla f(x^k), d^k\rangle\leq f(x^{k}) - f(x^k+\alpha_kd^k) \leq f(x^k) - f(x^{k+1}).
\]
Summing from $ k=0 $ to $ \infty $ on both sides of the above display, we obtain
\[
-c\sum\limits_{k=0}^\infty \alpha_k \langle \nabla f(x^k), d^k\rangle \leq f(x^0) - \inf\limits_{x\in\Gamma} f(x) < \infty;
\]
here, $\inf_\Gamma f$ is finite because $\Gamma$ is compact and  $f$ is continuously differentiable on an open set containing $\Gamma$.  Using this together with item (i), we can deduce that
\begin{equation}\label{sddlim}
	\lim_{k\to\infty} \alpha_k\langle \nabla f(x^k), d^k\rangle = 0.
\end{equation}

Next, notice that $\{x^{k}\}\subseteq \Gamma$ and $\Gamma$ is compact. This together with the boundedness of $\{d^k\} $ and the continuity of $ \nabla f $ implies that $ \{\langle\nabla f(x^k), d^k\rangle\} $ is bounded. Therefore, to prove the desired conclusion, it suffices to show that the limit of any convergent subsequence of $ \{\langle\nabla f(x^k), d^k\rangle\} $ is zero.

To this end, fix any convergent subsequence $ \{ \langle\nabla f(x^{k_t}), d^{k_t}\rangle \} $. Since $ \{\alpha_k\} $ is also bounded, by passing to a further subsequence if necessary, we have $\lim_{t\to\infty} (x^{k_t}, d^{k_t}, \alpha_{k_t}) = (x^*, d^*, \alpha_*)$ for some $ x^*\in\Gamma $, $ d^*\in\X $ and $ \alpha_*\geq 0 $. We consider two cases:

{\bf Case 1}: $ \alpha_*>0 $. Then $ \lim_{t\to\infty} \langle\nabla f(x^{k_t}), d^{k_t}\rangle = 0 $ follows directly from \eqref{sddlim}.

{\bf Case 2}: $\alpha_*=0$. In this case, using the fact that $ \inf \alpha_k^0 > 0 $ and the definition of $\alpha_k$ in \eqref{Armijols}, we see that backtracking must have been invoked for all large $t$, i.e., there exists $T $ such that $j_{k_t} \ge 1$ whenever $t\ge T$. Then $j = j_{k_t}-1$ violates the inequality in \eqref{Armijols}, i.e.,
\[
f(x^{k_t}+\eta^{-1}\alpha_{k_t}d^{k_t}) > f(x^{k_t}) + c\eta^{-1}\alpha_{k_t}\langle\nabla f(x^{k_t}), d^{k_t}\rangle \; \text{for every}\; t\geq T.
\]
Dividing both sides of the above inequality by $ \alpha_{k_t}/\eta $ and rearranging terms, we obtain
\[
\frac{f(x^{k_t}+\eta^{-1}\alpha_{k_t}d^{k_t}) - f(x^{k_t})}{\eta^{-1}\alpha_{k_t}} > c\langle\nabla f(x^{k_t}), d^{k_t}\rangle  \; \text{for every}\; t\geq T.
\]
Passing to the limit and rearranging terms in the above display, we have
\[
(1-c)\langle\nabla f(x^*), d^*\rangle \geq 0.
\]
Since $ c\in(0, 1) $, we see further that
\[
\lim_{t\to\infty} \langle\nabla f(x^{k_t}), d^{k_t}\rangle = \langle\nabla f(x^*), d^*\rangle \geq 0.
\]
On the other hand, we have $\lim_{t\to\infty} \langle\nabla f(x^{k_t}), d^{k_t}\rangle  \leq 0$
because $ \langle\nabla f(x^k), d^k\rangle < 0$ for every $ k $. Thus, we conclude that $ \lim_{t\to\infty} \langle\nabla f(x^{k_t}), d^{k_t}\rangle = 0 $. 
\qed \end{proof}

{\color{blue}Before ending this section, we briefly review the classic FW algorithm (i.e., for \eqref{P00} with convex $ C $) and its variants, discuss some recent extensions of it in nonconvex setting, and describe the basic idea of our approach.
	
	The FW algorithm with Armijo line search is presented in Algorithm~\ref{FW_convex}. In each iteration, one solves the \emph{linear-optimization oracle} \eqref{convexLO} and searches for the next iterate along the \emph{feasible direction} $ u^k-x^k $. The algorithm terminates early when the so-called \emph{duality gap} $ \langle \nabla f(x^k), u^k-x^k\rangle $ becomes zero, meaning that $x^k$ is a stationary point of \eqref{P00} with convex $ C $; otherwise, an infinite sequence is generated whose subsequential convergence can be deduced from Lemma~\ref{Armijolemma} as discussed in \cite[Proposition~2.2.1 and Section~2.2.2]{Bertsekas99}. Efficient implementations of the FW algorithm for applications such as matrix completion with {\color{blue} $ C=\{x\in\R^{m\times n}:\,\|x\|_*\leq \sigma \}$, where $\|x\|_*$ denotes the nuclear norm of the matrix $x \in \R^{m \times n}$,} can be found in \cite{Pedregosa20,Jaggi10,FrGM17}. An important variant of FW algorithm, the so-called FW algorithm with \emph{away {\color{blue}steps}} was proposed in \cite{Wolfe} and further studied in \cite{Guelat86} to tackle the possible zigzag behavior of the original FW iterates.
	For other variants of FW algorithm for \eqref{P00} with convex $ C $, we refer the readers to \cite{Jaggi13,Lacoste15,FrGM17} and the recent comprehensive survey \cite{BRZ21}.
}

\begin{algorithm}[H]
	\captionsetup{labelfont={color=blue},font={color=blue}}
		\caption{\ Frank-Wolfe algorithm for \eqref{P00} with convex $ C $}\label{FW_convex}
		{\color{blue}
	\begin{algorithmic}
		\State \vspace{-0.15 cm}
		\begin{description}
			\item[\bf Step 0.] Choose $x^0\in C$, $c\in (0, 1)$. Set $k = 0$.  \vspace{0.1 cm}
			\item[\bf Step 1.] Compute
			\begin{equation}\label{convexLO}
				u^k\in\Argmin_{x\in C} \langle\nabla f(x^k), x\rangle.
			\end{equation}	
				
			\noindent If $\langle\nabla f(x^k), u^k-x^k\rangle = 0$, terminate.	 \vspace{0.1 cm}
			\item[\bf Step 2.] Find $ \alpha_k\in(0, 1] $ {\color{blue} via} backtracking (starting from $1$) to satisfy the Armijo rule
			\[
			f(x^k + \alpha_k (u^k-x^k)) \le f(x^k) + c\alpha_k\langle\nabla f(x^k),  u^k-x^k \rangle.
			\]	
			\item[\bf Step 3.] Set $ x^{k+1} = x^k + \alpha_k  (u^k-x^k)$. Update $k \leftarrow k+1$ and go to Step 1.
		\end{description}
	\end{algorithmic}
}
\end{algorithm}

{\color{blue}
Compared with the vast literature on the FW algorithm and its variants for \eqref{P00} with $ C $ being convex, there are only few works extending this kind of algorithm for \emph{nonconvex} {\color{blue} sets} $ C $. As mentioned in the introduction, a notable difficulty of extending the FW algorithm for nonconvex $ C $ is that the update rule in Step 3 may lead to an infeasible $x^{k+1}$. Despite this, the FW algorithm has been extended in \cite{RT13,BalashovPolyak20} to some nonconvex settings; these works identified conditions on $f$ and $C$ to guarantee that $\alpha_k = 1$ in Step 2. However, they required $ C $ to be (a certain subset of) the boundary of a strongly convex set, which can be restrictive for applications.

In this paper, we take a different approach to extend the FW algorithm to some nonconvex settings. In particular, we focus on a broad class of nonconvex {\color{blue} feasible sets} $ C $ {\color{blue} which is given as the} level set of a difference-of-convex function, i.e., $ \mathcal{F} $ in \eqref{P0}. For this class of sets, we develop in the next section a \emph{new} linear-optimization oracle (Definition~\ref{LOdef}). Specifically, at a $y\in {\cal F}$ {\color{blue} and $\xi \in \partial P_2(y)$}, we replace $ \mathcal{F} $ by its {\em subset} $ \mathcal{F}(y,\xi)=\{x\in \X:\; P_1(x) - P_2(y)-\langle\xi, x-y\rangle\leq \sigma\} $. {\color{blue} We then define} the linear-optimization oracle as minimizing a suitable linear objective function over $\mathcal{F}(y,\xi)$. If $u$ is an output of such an oracle,
then $u\in\mathcal{F}(y,\xi)$.
Since we also have $ \mathcal{F}(y,\xi)$ being convex and $y\in\mathcal{F}(y,\xi)\subseteq \mathcal F$, we deduce that $ y + \alpha(u-y)\in\mathcal{F}(y,\xi)\subseteq \mathcal F $ for any $ \alpha\in[0,1] $, hence overcoming the \emph{infeasiblity issue}. We showcase scenarios when our new linear-optimization oracles can be efficiently {\color{blue} computed}; see Sections~\ref{gl} and \ref{mc}. We impose further regularity conditions on the representing functions $P_1$ and $P_2$ in \eqref{P0} in Section~\ref{sec4} so that the ``finite termination criterion" $\langle\nabla f(x^k), u^k-x^k\rangle = 0$, with the $u^k$ obtained from our new linear-optimization oracle, can still be connected to the notion of stationarity of \eqref{P0} in Definition~\ref{Stationary}. Equipped with the new linear-optimization oracle and the regularity conditions, we then present our FW-type algorithm for solving \eqref{P0} (with its $P_1$ and $P_2$ satisfying the aforementioned regularity conditions) in Section~\ref{sec:convergence}, where we also establish its well-definedness and subsequential convergence. Finally, we construct in Section~\ref{sec6} a new away-step oracle (Definition~\ref{awdef}) for \eqref{P0} by mimicking the away steps used in the atomic version of the FW algorithm with away steps \cite{Lacoste13}.
}



\section{Linear-optimization oracles: Examples}\label{sec:LO}

Recall that the FW method \cite{FrankWolfe56,DeRu70,Jaggi13,GarberHazan16} can be efficiently employed in some instances of \eqref{P00} when $C$ is \emph{convex} but the projections onto $C$ are difficult {\color{blue}or too expensive to compute}. Concrete examples of such $C$ include the $\ell_p$ norm ball constraint when $p\in(1, \infty)\setminus\{2\}$, the nuclear norm ball constraint that arises in matrix completion problems for recommender systems \cite{FrGM17}, and the total-variation norm ball adopted in image reconstruction tasks \cite{HaJN15}; for these examples, the so-called linear-optimization oracle can be carried out efficiently. Such oracle is used in each iteration of the FW method to generate a test point, and the next iterate of the FW method is obtained as a suitable \emph{convex combination} of the current iterate and the test point. Note that {\color{blue}the \emph{convexity} of $C$} in \eqref{P00} is crucial here so that the next iterate stays feasible. Since the constraint set of \eqref{P0} can be nonconvex in general, it appears that the classic FW method described above cannot be directly applied to solve \eqref{P0}.

%

As a first step towards developing {\color{blue}FW-type methods} for \eqref{P0}, let us define a (new) notion of linear-optimization oracle for the possibly nonconvex constraint set in \eqref{P0}. We will then discuss how to solve the linear-optimization oracles that correspond to some concrete applications. Our new FW-type methods for \eqref{P0} based on this new notion of linear-optimization oracles will be presented as Algorithms~\ref{FW_nonconvex} and \ref{alg:aw-fw} in Sections~\ref{sec:convergence} and \ref{sec6}, respectively.
\begin{definition}[{Linear-optimization oracle}]\label{LOdef}
Let $P_1$, $P_2$ and $\sigma$ be defined in \eqref{P0}, $y\in \mathcal{F}$, $\xi\in\partial P_2(y)$, and define
\begin{equation}\label{Fyxi}
{\cal F}(y,\xi) := \{x\in \X:\; P_1(x) - \langle \xi, x - y\rangle - P_2(y) \leq \sigma\}.
\end{equation}
Let $a\in \X$. A linear-optimization oracle for $(a, y, \xi)$ (denoted by $\LO(a, y, \xi)$ or $\LO$ for brevity) computes a solution of the following problem
\begin{equation}\label{LO}
  \begin{array}{rl}
\min\limits_{x\in\X} & \langle a, x\rangle\ \ \ \ {\rm s.t.}\ \  x\in {\cal F}(y,\xi).
  \end{array}
\end{equation}
\end{definition}
\begin{remark}[{Well-definedness of $\LO$}]\label{welldefined}
Notice that problem \eqref{LO} is well-defined. Indeed, given $y\in \mathcal{F}$ and $\xi\in\partial P_2(y)$, we have
\begin{equation}\label{constrain}
\begin{aligned}
y\in {\cal F}(y,\xi) &= \{x\in \X:\; P_1(x) - P_2(y) - \langle \xi, x- y\rangle \leq \sigma\}\\
&\subseteq \{x\in \X:\; P_1(x) - P_2(x) \leq \sigma\} = {\cal F}\subseteq B(0,M)
\end{aligned}
\end{equation}
{\color{blue} for some $M>0$}, where the first set inclusion follows from the convexity of $P_2$ and the fact that $\xi\in\partial P_2(y)$, and the second set inclusion holds due to the compactness of $\cal F$. Thus, problem \eqref{LO} is minimizing a linear objective function over a compact nonempty constraint set. Hence, its set of optimal solutions is nonempty.
\end{remark}

We now present in the following subsections some concrete examples of $\LO$ (in the sense of Definition~\ref{LOdef}) that can be carried out efficiently. Our first two examples arise from sparsity inducing problems (group sparsity) and the matrix completion problem, respectively. Our third example concerns the case when $P_1$ is strongly convex, where the $\LO$ can be shown to be related to the computation of the proximal mapping of a suitable function.

\subsection{Group sparsity}\label{gl}
In this subsection, we let $ \X =\R^n $ and let $x_J$ denote the subvector of $ x \in \R^n$ indexed by $ J $, where $ J\subseteq \{1,\ldots n\} $. We consider $P_1$ and $P_2$ as in the following assumption and discuss the corresponding $\LO$.
\begin{assumption}\label{assump31}
Let $ \X =\R^n $ and $\mathcal{J}$ be a partition of $\{1,2,\ldots, n\}$. Let $P_1(x) = \sum_{J\in\mathcal{J}}\|x_J\|$ and $P_2$ be a norm such that $P_2\le \mu P_1$ for some $\mu\in[0,1)$.
\end{assumption}
Notice that the choice of $P_1$ and $P_2$ in Assumption~\ref{assump31} (together with $\sigma > 0$) ensures that the constraint set $\{x\in \X:\; P_1(x) - P_2(x)\le \sigma\}$ is compact and nonempty. Thus, in view of Remark~\ref{welldefined}, the corresponding $\LO$ is well-defined.
The choice of $P_1$ in Assumption~\ref{assump31} is known as the group LASSO regularizer \cite{YuLi06}. As in \cite{YinLouHeXin15}, here we consider a natural extension that subtracts a norm $P_2$ from the group LASSO regularizer. An example of $P_2$ satisfying Assumption~\ref{assump31} is $P_2(x)=\mu\|x\|$ with $\mu \in [0,1)$.

With $P_1$ and $P_2$ in \eqref{P0} chosen as in Assumption~\ref{assump31}, for any given $a\in\R^n$, any $y\in {\cal F} $ and any $\xi\in\partial P_2(y)$, it holds that $P_2(y) = \langle\xi,y\rangle$ and hence the corresponding $\LO(a, y, \xi)$ solves a problem of the following form:
\begin{equation}\label{sgl}
\begin{array}{rl}
\min\limits_{x\in\R^n} & \displaystyle \langle a, x\rangle \ \ \ \
{\rm s.t.} \ \ \displaystyle \sum\limits_{J\in\mathcal{J}}\|x_J\| - \langle\xi, x\rangle\leq \sigma.
  \end{array}
\end{equation}
We next derive a closed form formula that describes an output of $\LO(a, y, \xi)$. To proceed with our derivation, we first establish the following lemma.
\begin{lemma}\label{gruopsub}
Let $b\in \R^p\backslash \{0\}$ and $c\in \R^p$ with $\|c\|<1$. Consider 
\begin{equation}\label{sgleq33}
\kappa: = \min\limits_{\|x\| = 1, \, {\color{blue} x \in \R^p}}  \frac{\langle b, x\rangle}{1 - \langle c, x\rangle}.
\end{equation}
Then $\kappa < 0$ and it holds that
\begin{equation*}
c - \frac{\langle b, c\rangle + \sqrt{\langle b, c\rangle^2 - \|b\|^2(\|c\|^2 - 1)}}{\|b\|^2}b \ \in\  \Argmin\limits_{\|x\| = 1}  \frac{\langle b, x\rangle}{1 - \langle c, x\rangle}.
\end{equation*}
\end{lemma}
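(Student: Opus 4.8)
The plan is to turn the fractional minimization in \eqref{sgleq33} into a threshold (level-set) problem and then use elementary geometry of a hyperplane versus the unit sphere. First I would record that \eqref{sgleq33} is well posed: for $\|x\|=1$ the Cauchy--Schwarz inequality together with $\|c\|<1$ gives $1-\langle c,x\rangle\ge 1-\|c\|>0$, so the objective is continuous on the compact unit sphere and the minimum $\kappa$ is attained.

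The key step is the following equivalence: a real number $t$ is a value taken by the objective in \eqref{sgleq33} if and only if the hyperplane $H_t:=\{x\in\R^p:\ \langle b+tc,\,x\rangle=t\}$ meets the unit sphere. Indeed, for $\|x\|=1$ one has $\langle b,x\rangle/(1-\langle c,x\rangle)=t$ if and only if $\langle b+tc,x\rangle=t$, since $1-\langle c,x\rangle>0$ lets us clear denominators. Now $H_t$ meets the unit sphere exactly when the distance from the origin to $H_t$ is at most $1$, i.e.\ exactly when $t^2\le\|b+tc\|^2$ (this formulation also correctly handles the degenerate case $b+tc=0$, which would force $t=0$ and hence $b=0$, impossible here). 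Therefore the set of values of the objective equals $\{t\in\R:\ g(t)\le 0\}$, where
\[
g(t):=t^2-\|b+tc\|^2=(1-\|c\|^2)\,t^2-2\langle b,c\rangle\,t-\|b\|^2 .
\]
Since $1-\|c\|^2>0$ and $g(0)=-\|b\|^2<0$, the parabola $g$ has two real roots $t_-<0<t_+$ with $\{g\le 0\}=[t_-,t_+]$; hence $\kappa=t_-<0$ (which yields the first assertion of the lemma) and
\[
\kappa=\frac{\langle b,c\rangle-\sqrt{D}}{1-\|c\|^2},\qquad D:=\langle b,c\rangle^2-\|b\|^2(\|c\|^2-1)>0 .
\]

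Next I would extract an explicit minimizer from the tangency at $t=\kappa$. From $g(\kappa)=0$ we get $\|b+\kappa c\|=|\kappa|=-\kappa$, so $H_\kappa$ is tangent to the unit sphere and touches it at the single point
\[
x^\star:=\frac{\kappa}{\|b+\kappa c\|^2}\,(b+\kappa c)=\frac{b+\kappa c}{\kappa}=c+\kappa^{-1}b ;
\]
one verifies directly that $\|x^\star\|=1$ and $\langle b+\kappa c,x^\star\rangle=\kappa$, so by the equivalence above the objective value at $x^\star$ is $\kappa$, i.e.\ $x^\star$ is a minimizer in \eqref{sgleq33}. Finally, since $D>\langle b,c\rangle^2$ we have $\langle b,c\rangle-\sqrt{D}<0$, and rationalizing gives
\[
\kappa^{-1}=\frac{1-\|c\|^2}{\langle b,c\rangle-\sqrt{D}}=\frac{(1-\|c\|^2)(\langle b,c\rangle+\sqrt{D})}{\langle b,c\rangle^2-D}=-\frac{\langle b,c\rangle+\sqrt{D}}{\|b\|^2},
\]
so that $x^\star=c+\kappa^{-1}b$ becomes exactly the claimed formula $c-\frac{\langle b,c\rangle+\sqrt{\langle b,c\rangle^2-\|b\|^2(\|c\|^2-1)}}{\|b\|^2}\,b$.

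The only step I expect to need care is the equivalence between objective values and hyperplane--sphere intersections — in particular tracking that the denominator $1-\langle c,x\rangle$ is strictly positive and that $b+tc=0$ does not occur — after which the parabola analysis, the identification of the tangency point, and the algebraic simplification are all routine. An alternative, less self-contained route via Lagrange multipliers also works: the stationarity condition forces the minimizer to be a multiple of $b+\kappa c$, and then $\langle b+\kappa c,x\rangle=\kappa<0$ pins down both its sign and its scaling.
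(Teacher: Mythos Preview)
Your proof is correct and takes a genuinely different route from the paper's. The paper argues via $0$-homogeneity of the objective to reduce \eqref{sgleq33} to the equality-constrained problem $\min\{\langle b,x_2\rangle:\|x_2\|-\langle c,x_2\rangle=1\}$, checks LICQ, and applies the Lagrange multiplier rule; this yields that any minimizer (after renormalisation) has the form $c-\lambda_*^{-1}b$ with $\|c-\lambda_*^{-1}b\|=1$, and the sign condition $\kappa<0$ selects the larger root of the resulting quadratic in $\lambda_*^{-1}$. Your level-set argument sidesteps multipliers entirely: by characterising the range of the objective as $\{t:g(t)\le 0\}$ you obtain both $\kappa<0$ and the explicit value of $\kappa$ at once, and the tangency of $H_\kappa$ to the sphere delivers the minimizer directly. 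Your approach is more elementary and yields the optimal value along the way; the paper's approach is closer to standard optimisation machinery and makes the link to KKT structure visible (which you acknowledge in your closing remark). The two are connected by the relation $\kappa^{-1}=-\lambda_*^{-1}$, so the final quadratic manipulations are essentially dual to one another.
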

\begin{proof}
First note that \eqref{sgleq33} is equivalent to the following problem
\begin{equation}\label{sgleq34}
\min\limits_{\|x\| = 1}  \frac{\langle b, x\rangle}{\|x\| - \langle c, x\rangle}.
\end{equation}
Since $\|c\| < 1$, we must have $\|x\| \pm \langle c, x\rangle > 0$ whenever $\|x\|=1$. Thus, the optimal value $\kappa$ of the above optimization problem must be negative.\footnote{Since $b\neq 0$, the objective in \eqref{sgleq34} is negative at $x = -\frac{b}{\|b\|}$.}

Since the objective of \eqref{sgleq34} is (positively) $0$-homogeneous, we see that an $x^*$ solves \eqref{sgleq34} if and only if $x^* = \frac{x_1^*}{\|x_1^*\|}$, where
\begin{equation}\label{sgleq4}
x_1^*\in \Argmin\limits_{x_1\in\R^p\setminus\{0\}}  \frac{\langle b, x_1\rangle}{\|x_1\| - \langle c, x_1\rangle}.
\end{equation}
Using the (positive) $0$-homogeneity of the objective function in \eqref{sgleq4}, one can see further that an optimal solution $x_2^*\in\R^p$ of the following problem must be optimal for \eqref{sgleq4}:
\begin{equation}\label{sgleq5}
\begin{array}{rl}
\min\limits_{x_2\in\R^p} & \langle b, x_2\rangle\ \ \ \ {\rm s.t.} \ \ \|x_2\| - \langle c, x_2\rangle = 1.
\end{array}
\end{equation}
We now turn to solving \eqref{sgleq5}. For any $x_2$ satisfying $\|x_2\| - \langle c, x_2\rangle = 1$, we have $x_2 \neq 0$ and $\frac{x_2}{\|x_2\|} - c \neq 0$ (thanks to $\|c\|<1$). Thus, the LICQ holds for \eqref{sgleq5}. Hence, for any optimal solution $x_2^*$ of \eqref{sgleq5},\footnote{Since $\|c\| < 1$, the feasible set of \eqref{sgleq5} is compact and nonempty. This implies that the set of optimal solutions is nonempty.} there exists $\lambda_*\in\R$ such that
\[
0= b + \lambda_*\left(\frac{x_2^*}{\|x_2^*\|} - c\right).
\]
Since $b\not= 0$, we have that $\lambda_*\not= 0$ and hence $\frac{x_2^*}{\|x_2^*\|} = c - \lambda_*^{-1}b$, which further gives
\begin{equation}\label{findlambda}
  \|c - \lambda^{-1}_*b\|^2 = 1.
\end{equation}
Next, since $c - \lambda^{-1}_*b$ is a positive rescaling of $x_2^*$, we conclude using the relations between \eqref{sgleq34}, \eqref{sgleq4} and \eqref{sgleq5} that $c - \lambda^{-1}_*b$ is an optimal solution of \eqref{sgleq34}. Since $\kappa < 0$, we must then have
\[
0>\langle b, c - \lambda^{-1}_*b\rangle = \langle b, c\rangle - \lambda^{-1}_*\|b\|^2 \quad\Longrightarrow\quad \lambda_*^{-1} > \frac{\langle b, c\rangle}{\|b\|^2}.
\]
Using this together with \eqref{findlambda}, we see that $\lambda^{-1}_*$ must be the larger solution of the quadratic equation $\|c - tb\|^2 = 1$. Solving this quadratic equation, we obtain that
\[
\lambda_*^{-1} = {\color{blue} \frac{\langle b, c\rangle + \sqrt{\langle b, c\rangle^2 - \|b\|^2(\|c\|^2 - 1)}}{\|b\|^2}}.
\]
Combining this with the fact that $c - \lambda^{-1}_*b$ is an optimal solution of \eqref{sgleq34} (and hence \eqref{sgleq33}) completes the proof.
\qed \end{proof}

We now present a closed form solution of problem~\eqref{sgl}. For notational convenience, we define for any $x\in \R^p$ the following sign function,
\begin{equation}\label{sign}
\sgn(x):= \begin{cases}
\frac{x}{\|x\|} & {\rm if}\ x\not=0,\\
\frac{e}{\|e\|} & {\rm if}\ x = 0,
\end{cases}
\end{equation}
where $e$ is the vector of all ones of dimension $p$. {\color{blue} Note that $x = \|x\|{\rm Sgn}(x)$ for all $x\in \R^p$. The proof below makes use of this relation to re-scale the constraint function in \eqref{sgl} and decouple the problem into optimization problems that involve the modulus and the ``angle" respectively; Lemma~\ref{gruopsub} is then invoked to handle the latter optimization problem.}

\begin{theorem}[{Closed form solution for \eqref{sgl}}]
  Consider \eqref{sgl} with $a\neq 0$. For each $J\in {\cal J}$, fix any vector $v_J\in \R^{\lvert J\rvert}$ with $\|v_J\|=1$ and define
\begin{equation}\label{wstar}
w_J^*=
\begin{cases}
  v_J & {\rm if}\  a_J = 0,\\
  \xi_J - \frac{\langle a_J, \xi_J\rangle + \sqrt{\langle a_J, \xi_J\rangle^2 - \|a_J\|^2(\|\xi_J\|^2 - 1)}}{\|a_J\|^2}a_J & {\rm otherwise},
\end{cases}
\end{equation}
{\color{blue}where $\xi$ is defined as in \eqref{sgl}.}
Let $\mathcal{I}:= \Argmin_{J\in {\cal J}}\{\kappa_J\}$, where
  \begin{equation}\label{sgleq3}
\kappa_J := \min\limits_{\|w_J\| = 1}  \frac{\langle a_J, w_J\rangle}{1 - \langle \xi_J, w_J\rangle},
\end{equation}
and fixed any $J_0\in\mathcal{I}$.\footnote{These $\kappa_J$ are readily computable. Indeed, as we will point out later in the proof of this theorem, the $w^*_J$ in \eqref{wstar} solves the minimization problem in \eqref{sgleq3}, thanks to Lemma~\ref{gruopsub}.} Then a solution $x^*= (x^*_J)_{J\in {\cal J}}$ of \eqref{sgl} is given by
\begin{equation}\label{hahahahaha}
x^*_J = \begin{cases}
  \frac{\sigma w_{J_0}^*}{1 - \langle \xi_{J_0}, w_{J_0}^*\rangle} & {\rm if}\ J = J_0,\\
  0 & {\rm otherwise}.
\end{cases}
\end{equation}
\end{theorem}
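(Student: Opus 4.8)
The plan is to reduce \eqref{sgl} to a separable problem via the substitution $x_J=\|x_J\|\,\sgn(x_J)$ for each $J\in\mathcal J$, which lets us optimize the ``radii'' $\|x_J\|$ and the ``angles'' $\sgn(x_J)$ separately. A preliminary fact I would record is that $\|\xi_J\|<1$ for every $J$: since $\xi\in\partial P_2(y)$ and $P_2$ is a norm, $\xi$ lies in the unit ball of the dual norm $P_2^{\circ}$, and $P_2\le\mu P_1$ dualizes to $P_1^{\circ}\le\mu P_2^{\circ}$; together with the identity $P_1^{\circ}(v)=\max_{J}\|v_J\|$ this gives $\max_J\|\xi_J\|\le\mu<1$. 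Consequently $1-\langle\xi_J,w_J\rangle>0$ whenever $\|w_J\|=1$, so the quotients in \eqref{sgleq3} are well-defined; applying Lemma~\ref{gruopsub} with $b=a_J$ and $c=\xi_J$ on the groups with $a_J\neq0$ shows $\kappa_J<0$ there and that the $w_J^*$ of \eqref{wstar} attains it, while on groups with $a_J=0$ the quotient is identically $0$, so $\kappa_J=0$ and $w_J^*=v_J$ works. This also proves the footnote claim. Since $a\neq0$, some $\kappa_J<0$, so the $J_0\in\mathcal I$ from the statement satisfies $a_{J_0}\neq0$ and $\kappa_{J_0}=\min_J\kappa_J<0$.

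Next I would check that $x^*$ in \eqref{hahahahaha} is feasible and evaluate its objective. Writing $r_{J_0}:=\sigma/(1-\langle\xi_{J_0},w_{J_0}^*\rangle)>0$, we have $x^*_{J_0}=r_{J_0}w_{J_0}^*$ and $x^*_J=0$ otherwise, hence $\sum_J\|x^*_J\|-\langle\xi,x^*\rangle=r_{J_0}(1-\langle\xi_{J_0},w_{J_0}^*\rangle)=\sigma$, so $x^*$ is feasible for \eqref{sgl} with the constraint active, and $\langle a,x^*\rangle=r_{J_0}\langle a_{J_0},w_{J_0}^*\rangle=\sigma\kappa_{J_0}$.

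For optimality I would argue by a lower bound. Take any feasible $x$ and set $r_J:=\|x_J\|\ge0$, $w_J:=\sgn(x_J)$, so $x_J=r_Jw_J$ and $\langle a,x\rangle=\sum_Jr_J\langle a_J,w_J\rangle$. For each $J$, whether or not $a_J=0$, one has $\langle a_J,w_J\rangle\ge\kappa_{J_0}(1-\langle\xi_J,w_J\rangle)$: if $a_J\neq0$ this follows from $\langle a_J,w_J\rangle\ge\kappa_J(1-\langle\xi_J,w_J\rangle)\ge\kappa_{J_0}(1-\langle\xi_J,w_J\rangle)$, using $1-\langle\xi_J,w_J\rangle>0$ and $\kappa_{J_0}\le\kappa_J<0$; if $a_J=0$ it follows from $0\ge\kappa_{J_0}(1-\langle\xi_J,w_J\rangle)$. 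Multiplying by $r_J\ge0$, summing, and using $r_Jw_J=x_J$ and that $\mathcal J$ is a partition gives $\langle a,x\rangle\ge\kappa_{J_0}\left(\sum_J\|x_J\|-\langle\xi,x\rangle\right)\ge\kappa_{J_0}\sigma=\langle a,x^*\rangle$, where the last inequality uses $\kappa_{J_0}<0$ together with feasibility of $x$. This shows $x^*$ solves \eqref{sgl}.

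The crux of the argument is the decoupling step: once the ``budgets'' $b_J:=r_J(1-\langle\xi_J,w_J\rangle)\ge0$ are held fixed (they sum to at most $\sigma$), the objective becomes the nonnegative combination $\sum_Jb_J\kappa_J$, so optimality forces all budget onto the group with the most negative $\kappa_J$ and each angle to be chosen by Lemma~\ref{gruopsub}. The routine but slightly delicate points are verifying $\|\xi_J\|<1$ (so the quotients and Lemma~\ref{gruopsub} are applicable) and pushing the per-group inequality through the groups with $a_J=0$; neither should present a genuine difficulty.
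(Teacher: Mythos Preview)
Your proof is correct and rests on the same decoupling idea as the paper---separating the group radii $r_J=\|x_J\|$ from the directions $w_J=\sgn(x_J)$ and using Lemma~\ref{gruopsub} to handle the latter---but the route you take to optimality is more direct than the paper's. The paper performs an explicit change of variables $u_J=(1-\langle\xi_J,\sgn(x_J)\rangle)x_J$ to reduce \eqref{sgl} to a problem over $\sum_J\|u_J\|\le\sigma$, then writes $u_J=r_Jw_J$ and solves sequentially in $w$ (via Lemma~\ref{gruopsub}) and in $r$ (a simplex LP), and finally transforms back. You skip the reformulation entirely: after checking feasibility and computing the candidate value $\sigma\kappa_{J_0}$, you establish the matching lower bound $\langle a,x\rangle\ge\kappa_{J_0}\bigl(\sum_J\|x_J\|-\langle\xi,x\rangle\bigr)\ge\sigma\kappa_{J_0}$ directly, using only the definition of $\kappa_J$, $\kappa_{J_0}\le\kappa_J$, and the sign of $\kappa_{J_0}$. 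The paper's approach makes the structure of the optimization transparent (budget allocation on a simplex), while yours is shorter and avoids tracking the inverse change of variables; both rely identically on Lemma~\ref{gruopsub} and on $\|\xi_J\|<1$.
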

\begin{proof}
The existence of optimal solutions to \eqref{sgl} follows from Remark~\ref{welldefined}.
Also, for each $J\in {\cal J}$, we have $\|\xi_J\|\leq \mu < 1$ (thanks to the fact that $P_2$ is a norm, $P_2\leq\mu P_1$ and $\mu< 1$). In addition, from \eqref{sign}, then one can check readily that $\|\sgn(x_J)\| = 1$ and $x_J = \|x_J\|\sgn(x_J)$. In particular, we have $1 - \langle\xi_J, \sgn(x_J)\rangle > 0$.

For each $J\in\mathcal{J}$, let $u_J = (1 - \langle\xi_J, \sgn(x_J)\rangle)x_J$. Then we have $\|u_J\| = \big(1 - \langle\xi_J, \sgn(x_J)\rangle\big)\|x_J\|$ and $\sgn(u_J) = \sgn(x_J)$. Thus, we obtain the following equivalences:
\begin{equation*}
\begin{split}
&\sum\limits_{J\in\mathcal{J}}(\|x_J\| \!-\! \langle\xi_J, x_J\rangle)\!\leq\! \sigma
\!\Longleftrightarrow\!\! \sum\limits_{J\in\mathcal{J}}\!(1 \!-\! \langle\xi_J, \sgn(x_J)\rangle)\|x_J\|\!\leq\! \sigma
\!\Longleftrightarrow\!\! \sum\limits_{J\in\mathcal{J}}\! \|u_J\|\! \leq \sigma.
\end{split}
\end{equation*}
Hence, an $\widehat x$ solves \eqref{sgl} if and only if $\widehat x_J = {\color{blue} \frac{\widehat u_J}{1 - \langle\xi_J, \sgn(\widehat u_J)\rangle}}$ for each $J\in\mathcal{J}$, where
\begin{equation}\label{sgleq1}
\widehat u\in\Argmin\limits_{\sum\limits_{J\in\mathcal{J}}\|u_J\| \leq \sigma} \sum\limits_{J\in\mathcal{J}}\left\langle a_J, \frac{u_J}{1 - \langle\xi_J, \sgn(u_J)\rangle} \right\rangle
\end{equation}


We next discuss how to find such a $\widehat u$. Notice that any $u_J\in\R^{\lvert J\rvert}$ can be written as $u_J = r_Jw_J$ for some $r_J\geq 0$ and $w_J\in \R^{\lvert J\rvert}$ satisfying $\|w_J\| = 1$. Consequently, $\widehat u$ satisfies \eqref{sgleq1} if and only if $\widehat u_J = \widehat r_J\widehat w_J$, for each $J\in\mathcal{J}$, where
\begin{equation}\label{sgleq2}
\begin{aligned}
(\widehat r, \widehat w)\in \Argmin\limits_{(r, w)\in\R^{\lvert \mathcal{J}\rvert}\times\R^n}& \displaystyle\sum\limits_{J\in\mathcal{J}} \left\langle a_J, \frac{ w_J}{1 - \langle\xi_J, w_J\rangle} \right\rangle r_J\\ 
  {\rm s.t.} & \displaystyle\sum_{J\in {\cal J}}r_J \le \sigma,\\
  & \displaystyle r_J\geq 0, \ \ \|w_J\| = 1, \  \ \forall J\in\mathcal{J}.
\end{aligned}
\end{equation}
To solve \eqref{sgleq2}, we start with the minimization with respect to $w_J$. This amounts to solving the optimization problems \eqref{sgleq3} for each $J\in\mathcal{J}$. There are two cases:
\begin{itemize}
  \item If $a_J = 0$, then $\kappa_J = 0$ and the minimum in \eqref{sgleq3} is achieved at any feasible $w_J$; in particular, one can take $\widehat w_J = v_J$ (with $v_J$ as in \eqref{wstar}).
  \item If $a_J \not= 0$, we can apply Lemma~\ref{gruopsub} with $p := \lvert J\rvert$, $b := a_J$ and $c := \xi_J$, to deduce that $\kappa_J < 0$ and a minimizer is given as in \eqref{wstar}.
\end{itemize}
Thus, one can take $\widehat w _J = w^*_J$ in \eqref{wstar} as a minimizer (with respect to $w_J$) in \eqref{sgleq2}.

Now, to solve \eqref{sgleq2}, it remains to consider the following problem to find an $\widehat r$:
\begin{equation}\label{sgleq6}
\begin{array}{rl}
  \min\limits_{r\in\R^{\lvert\cal{J}\rvert}} & \displaystyle\sum\limits_{J\in\mathcal{J}} \kappa_Jr_J\ \ \ \ \ \ {\rm s.t.}\ \ \ \displaystyle\sum_{J\in {\cal J}}r_J \le \sigma,\ \ r_J \ge 0,\ \ \forall J\in {\cal J}.
\end{array}
\end{equation}
Recall that $\mathcal{I}= \Argmin_{J}\{\kappa_J\}$ and $J_0$ is a fixed element in $\mathcal{I}$. Since $\kappa_{J} \leq 0$ for any $J\in\mathcal{J}$, we see that $r^* = (r^*_J)_{J\in {\cal J}}$ defined below is an optimal solution of \eqref{sgleq6}:
\begin{equation}\label{rstar}
r^*_J = \begin{cases}
  \sigma & {\rm if}\ J = J_0,\\
  0 & {\rm otherwise}.
\end{cases}
\end{equation}
Hence, a solution to \eqref{sgleq2} is given by $\widehat r = r_J^*$ and $\widehat w = w_J^*$, for each $J$, defined in \eqref{rstar} and \eqref{wstar} respectively.

Finally, invoking the relationship between \eqref{sgleq1} and \eqref{sgleq2}, we see that $\widehat u$ with $\widehat u_J:= r^*_Jw^*_J$ for each $J$ solves \eqref{sgleq1}. Recall that \eqref{sgl} and \eqref{sgleq1} are related via $u_J = (1 - \langle\xi_J, \sgn(x_J)\rangle)x_J$ and $x_J = \frac{1}{1 - \langle \xi_J, \sgn(u_J)\rangle}u_J$ for all $J$. Thus, we conclude that an optimal solution $x^*= (x^*_J)_{J\in {\cal J}}$ of \eqref{sgl} is given by:
\begin{equation*}
x^*_J = \frac{1}{1 - \langle \xi_J, \sgn(r_J^*w^*_J)\rangle}r_J^*w^*_J = \begin{cases}
  \frac{\sigma w_{J_0}^*}{1 - \langle \xi_{J_0}, w_{J_0}^*\rangle} & {\rm if}\ J = J_0,\\
  0 & {\rm otherwise}.
\end{cases}
\end{equation*}
This completes the proof.
\qed \end{proof}

\begin{remark}[Element-wise sparsity]\label{cssparsity}
On passing, we discuss an interesting special case where every $J\in\mathcal{J}$ is a singleton. In this case, we have $P_1 = \|\cdot\|_1$ and $P_2:\R^n\rightarrow\R$ is a norm satisfying $P_2\le \mu P_1$ for some $\mu\in[0,1)$. Such a setting arises in compressed sensing \cite{LouYan18,YinLouHeXin15}, where $P_2$ can be chosen as $P_2 = \mu\|\cdot\|$ for some $\mu \in [0,1)$, resulting in the difference of $\ell_1$ and (a positive multiple of) $\ell_2$ norm regularizer. A closed form formula that describes an output of the corresponding $\LO(a, y, \xi)$ can be readily deduced from \eqref{hahahahaha} as follows, upon invoking \eqref{wstar} and \eqref{rstar}:
\begin{equation}\label{xstar}
x^*_i=\begin{cases}
\displaystyle\frac{-\sigma{\rm Sgn}(a_{i_0})}{1 + \xi_{i_0}{\rm Sgn}(a_{i_0})} & i=i_0,\\
\displaystyle 0 & i\neq i_0,
\end{cases}
\end{equation}
where $ \mathcal{I}:= \Argmin_{i}\left\{ \frac{-\lvert a_i\rvert }{1 + \xi_i {\rm Sgn}(a_i)} \right\} $ with $ i_0 $ being any fixed element of $\mathcal{I} $, and ${\rm Sgn}$ is defined in \eqref{sign}.
\end{remark}

\subsection{Matrix completion}\label{mc}
In this subsection, we let $\X = \R^{m\times n}$. For an $ {\color{blue}x}\in\X $, we denote by $ \|{\color{blue}x}\|_* $ and $ \|{\color{blue}x}\|_F $ its nuclear norm and Frobenius norm, respectively. We consider $P_1$ and $P_2$ as in the following assumption and discuss the corresponding $\LO$.
\begin{assumption}\label{assump32}
Let $\X = \R^{m\times n}$, $P_1({\color{blue}x}) = \|{\color{blue}x}\|_*$ and $P_2$ be a norm function such that $P_2\le \mu P_1$ for some $\mu\in[0,1)$.
\end{assumption}
Observe that under the choice of $P_1$ and $P_2$ in Assumption~\ref{assump32} (together with $\sigma > 0$), the set $\{{\color{blue}x}\in \X:\; P_1({\color{blue}x}) - P_2({\color{blue}x})\le \sigma\}$ is nonempty and compact. Hence, the corresponding $\LO$ is well-defined thanks to Remark~\ref{welldefined}.
An example of $P_2$ is $P_2({\color{blue}x})= \mu\|{\color{blue}x}\|_F$ with $\mu \in [0,1)$. In this case, the regularization $ P({\color{blue}x})=\|{\color{blue}x}\|_* - \mu\|{\color{blue}x}\|_F $ has been used in low rank matrix completion, which can be viewed as an extension of the $ \ell_{1-2} $ {\color{blue} regularizer used} in compressed sensing. Exact and stable recovery {\color{blue}conditions} and numerical advantages of this class of nonconvex nonsmooth {\color{blue} regularizer} with $ \mu=1 $ are discussed in \cite{MaLou17}.

Now, with $P_1$ and $P_2$ in \eqref{P0} chosen as in Assumption~\ref{assump32}, for any given ${\color{blue}a}\in\R^{m\times n}$, any ${\color{blue}y}\in {\cal F} $ and any ${\color{blue}\xi}\in \partial P_2({\color{blue}y})$, it holds that $P_2({\color{blue}y}) = \langle{\color{blue}\xi,y}\rangle$ and thus the corresponding $\LO({\color{blue}a, y, \xi})$ solves a problem of the form:
\begin{equation}\label{lo-mat}
\min\limits_{{\color{blue}x}\in\R^{m\times n}} \langle {\color{blue}a, x }\rangle \quad \text{s.t.}\; \|{\color{blue}x}\|_* -\langle {\color{blue}\xi, x}\rangle \leq \sigma.
\end{equation}We next present a closed form solution of \eqref{lo-mat}. For notational simplicity, we write\footnote{\color{blue}In this subsection, for notational clarity, we use upper case letters to denote square matrices of size $(m+n)\times(m+n)$, and use lower case letters to denote matrices and vectors of other dimensions.}
\begin{equation}\label{AandXi}
  \widetilde{A} = \begin{bmatrix}
	0 & {\color{blue}a} \\ {\color{blue}a}^T & 0
	\end{bmatrix} {\color{blue} \in S^{m+n}}\ \ \ {\rm and}\ \ \ \widetilde{\Xi}= \begin{bmatrix}
	0 & {\color{blue}\xi} \\ {\color{blue}\xi}^T & 0
	\end{bmatrix} {\color{blue} \in S^{m+n}}.
\end{equation}
{\color{blue} The proof below involves the reformulation of \eqref{lo-mat} into a semidefinite programming (SDP) problem, and makes use of the well-known result \cite[Theorem~2.2]{Pataki98} concerning the rank of extreme points of the solution set of an SDP problem.}

\begin{theorem}\label{cfs-mat}
	Consider \eqref{lo-mat} with ${\color{blue}a}\neq 0$. Let $ z\in\R^{m+n} $ be a generalized eigenvector of the smallest generalized eigenvalue of the matrix pencil $ (\widetilde A, I - \widetilde\Xi) $, and satisfy $z^T(I - \widetilde\Xi)z = 1$, where $\widetilde A$ and $\widetilde\Xi$ are given in \eqref{AandXi}. Then $ {\color{blue}x}^*=2\sigma z_1z_2^T $ is an optimal solution of \eqref{lo-mat}, where $ z=\begin{bmatrix}
  z_1^T & z_2^T
\end{bmatrix}^T $ with $ z_1\in\R^m $ and $ z_2\in\R^n $.
\end{theorem}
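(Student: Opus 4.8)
The plan is to recast \eqref{lo-mat} as a semidefinite program (SDP) over $S^{m+n}$, show that this SDP admits a rank-one optimal solution, and then observe that minimizing over rank-one matrices reduces to a generalized eigenvalue problem for the pencil $(\widetilde A, I-\widetilde\Xi)$. For the reformulation I would use the semidefinite representation of the nuclear norm, $\|x\|_* = \min\{\tfrac12(\tr W_1 + \tr W_2):\ \bigl[\begin{smallmatrix} W_1 & x\\ x^{T} & W_2\end{smallmatrix}\bigr]\succeq 0\}$, together with the identities $\langle a,x\rangle = \tfrac12\langle\widetilde A, X\rangle$, $\langle\xi,x\rangle = \tfrac12\langle\widetilde\Xi,X\rangle$ and $\tr W_1 + \tr W_2 = \langle I,X\rangle$, valid for $X = \bigl[\begin{smallmatrix} W_1 & x\\ x^{T} & W_2\end{smallmatrix}\bigr]$, to prove that \eqref{lo-mat} has the same optimal value as
\begin{equation*}
\min_{X \in S^{m+n}}\ \tfrac12\langle \widetilde A, X\rangle \quad\text{s.t.}\quad \langle I - \widetilde\Xi, X\rangle \le 2\sigma, \ \ X \succeq 0,
\end{equation*}
and that the top-right $m\times n$ block of any solution of this SDP solves \eqref{lo-mat}. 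One inequality between the two optimal values comes from building a feasible $X$ from the SVD of a feasible $x$ of \eqref{lo-mat}; the reverse comes from checking that the block of any feasible $X$ is feasible for \eqref{lo-mat} with no larger objective, using $\|x\|_*\le\tfrac12\langle I,X\rangle$.

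Next I would establish well-posedness. Since $\xi\in\partial P_2(y)$ and $P_2$ is a norm with $P_2\le\mu P_1=\mu\|\cdot\|_*$ for some $\mu\in[0,1)$, one has $\langle\xi,x\rangle\le P_2(x)\le\mu\|x\|_*$ for all $x$, so the spectral norm of $\xi$ is at most $\mu$; as the eigenvalues of $\widetilde\Xi$ are $\pm$ the singular values of $\xi$ (and possibly $0$), this yields $I-\widetilde\Xi\succeq(1-\mu)I\succ0$. Hence the SDP feasible set is nonempty and compact (for $X\succeq0$ one has $(1-\mu)\tr X\le\langle I-\widetilde\Xi,X\rangle\le2\sigma$), so the minimum is attained, and $(\widetilde A,I-\widetilde\Xi)$ is a symmetric-definite pencil, so a $z$ as in the statement (realizing the smallest generalized eigenvalue $\lambda_1$ of $(\widetilde A,I-\widetilde\Xi)$ and normalized by $z^{T}(I-\widetilde\Xi)z=1$) exists. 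Moreover $\lambda_1<0$: taking a singular-vector pair $(u_1,v_1)$ of $a\ne0$ and $u=(u_1^{T},-v_1^{T})^{T}$ gives $u^{T}\widetilde A u<0<u^{T}(I-\widetilde\Xi)u$, and a small positive multiple of $uu^{T}$ is SDP-feasible with negative objective, so the SDP optimal value is negative and every optimal $X$ is nonzero.

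To obtain a rank-one optimal solution I would first note that the inequality constraint is active at any optimal $X^\star$: otherwise a slight upward rescaling of $X^\star$ stays feasible and, since $\langle\widetilde A,X^\star\rangle=2\,\mathrm{opt}<0$, strictly lowers the objective. Hence the solution set of the SDP equals $\{X\succeq0:\ \langle\widetilde A,X\rangle=2\,\mathrm{opt},\ \langle I-\widetilde\Xi,X\rangle=2\sigma\}$, a compact affine slice of $S_+^{m+n}$ with two equality constraints, so it has an extreme point $X^\star$ whose rank $r$ satisfies $\binom{r+1}{2}\le2$ by \cite[Theorem~2.2]{Pataki98}; thus $r\le1$, and $r=1$ since $X^\star\ne0$. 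Write $X^\star=ww^{T}$ with $w\in\R^{m+n}$, $w\ne0$. Because every $u\ne0$ with $u^{T}(I-\widetilde\Xi)u=2\sigma$ gives a feasible rank-one matrix $uu^{T}$ with SDP objective $\tfrac12u^{T}\widetilde A u$, optimality of $ww^{T}$ forces $w$ to minimize the generalized Rayleigh quotient $u^{T}\widetilde A u/(u^{T}(I-\widetilde\Xi)u)$ over $u\ne0$ (using its $0$-homogeneity and $\sigma>0$), whose minimum is $\lambda_1$; in particular $\mathrm{opt}=\tfrac12w^{T}\widetilde A w=\sigma\lambda_1$. Finally, for the $z=(z_1^{T},z_2^{T})^{T}$ in the statement (with $z_1\in\R^m$, $z_2\in\R^n$) one has $2\sigma zz^{T}\succeq0$ and $\langle I-\widetilde\Xi,2\sigma zz^{T}\rangle=2\sigma z^{T}(I-\widetilde\Xi)z=2\sigma$, so $2\sigma zz^{T}$ is SDP-feasible with objective $\sigma z^{T}\widetilde A z=\sigma\lambda_1 z^{T}(I-\widetilde\Xi)z=\sigma\lambda_1$, hence SDP-optimal; reading off its top-right $m\times n$ block shows $x^*=2\sigma z_1z_2^{T}$ solves \eqref{lo-mat}. (The identity $\mathrm{opt}=\sigma\lambda_1$ can also be obtained directly, without Pataki, from $\widetilde A-\lambda_1(I-\widetilde\Xi)\succeq0$, $X\succeq0$, $\langle I-\widetilde\Xi,X\rangle\le2\sigma$ and $\lambda_1<0$.)

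The step I expect to be most delicate is the SDP reformulation: verifying both directions of the nuclear-norm semidefinite representation and, in particular, that a minimizer of \eqref{lo-mat} can be recovered from an \emph{arbitrary} optimal $X$ of the SDP (one must check the block of a feasible $X$ is feasible for \eqref{lo-mat} with no larger objective). A secondary point needing care is that the inequality constraint is active at the optimum, which is what lets Pataki's rank bound be invoked with the correct number of equalities; the estimate that the spectral norm of $\xi$ is below $1$, hence $I-\widetilde\Xi\succ0$, underlies all the compactness and definiteness arguments along the way.
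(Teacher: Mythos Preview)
Your proposal is correct and follows essentially the same route as the paper: lift \eqref{lo-mat} to an SDP via the semidefinite representation of the nuclear norm, invoke Pataki's rank bound to obtain a rank-one optimal $X$, and identify rank-one minimizers with generalized eigenvectors of $(\widetilde A, I-\widetilde\Xi)$. The only cosmetic difference is in how Pataki is applied---the paper adds a slack variable and uses one equality constraint (so the rank bound simultaneously forces $r_Y=1$ and $\alpha_*=0$), whereas you first argue directly that the constraint is active and then apply Pataki to the solution set with two equalities; both are valid, and your parenthetical remark that $\widetilde A-\lambda_1(I-\widetilde\Xi)\succeq 0$ yields $\mathrm{opt}=\sigma\lambda_1$ without Pataki is a nice alternative shortcut.
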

\begin{proof}
	For notational convenience, we write
	\begin{equation}\label{Zstar}
	Z^*:= 2\sigma zz^T=\begin{bmatrix}
	{\color{blue}u}^* & {\color{blue}x}^* \\ ({\color{blue}x}^*)^T & {\color{blue}v}^*
	\end{bmatrix}\succeq 0,
	\end{equation}
	where $ u^* = 2{\sigma}z_1z_1^T $ and $ v^* =  2{\sigma}z_2z_2^T $.
	
	We first claim that $ Z^* $ is a solution of the following optimization problem:\footnote{Note that $I - \widetilde \Xi \succ 0$ because the spectral norm of {\color{blue}$\xi$} is at most $\mu < 1$. Thus, the feasible set of \eqref{auxp} is nonempty and compact, and hence the set of optimal solution is nonempty.}
	\begin{equation}\label{auxp}
	\min\limits_{Y\in{\mathcal S}^{m+n}_+}\; \langle \widetilde{A}, Y\rangle \quad \text{s.t.}\; \langle I-\widetilde{\Xi}, Y \rangle \leq 2\sigma.
	\end{equation}
	Indeed, by adding a slack variable $ \alpha\in\R_+ $, \eqref{auxp} can be written as
	\begin{equation}\label{auxp1}
	\min\limits_{Y\in{\mathcal S}^{m+n}_+, \alpha \geq 0}\; \langle \widetilde{A}, Y\rangle \quad \text{s.t.}\; \langle I-\widetilde{\Xi}, Y \rangle +\alpha = 2\sigma.
	\end{equation}
	Note that there is only one equality constraint in the above semidefinite programming problem, and there must be a solution $ (Y^*, \alpha_*) $ that is an extreme point of the feasible set of \eqref{auxp1} (as the solution set of \eqref{auxp1} does not contain a line). According to \cite[Theorem~2.2]{Pataki98}, the rank $ r_Y $ of $ Y^* $ and the rank $ r_\alpha $ of $\alpha_* $ satisfy
	\[
	r_Y(r_Y+1) + r_\alpha (r_\alpha +1 ) \leq 2.
	\]
	Since $ {\color{blue}a}\neq 0 $, we must have $ Y^*\neq 0 $. This fact together with the above display implies that $ r_Y = 1 $ and $r_{\alpha}=0$. Therefore, we can write the rank-1 solution $ Y^* $ of \eqref{auxp1} as $ Y^* = 2\sigma y^*(y^*)^T $ for some $ y^*\in\R^{m+n}$ that solves
	\begin{equation*}
	\min_{y\in\R^{m+n}} y^T\widetilde{A}y \quad \text{s.t.}\; y^T(I-\widetilde{\Xi})y = 1.
	\end{equation*}
	Such a $y^*$ can be obtained as a generalized eigenvector that corresponds to the smallest generalized eigenvalue of the matrix pencil $ (\widetilde A, I-\widetilde\Xi) $ and satisfies $ y^T(I-\widetilde{\Xi})y = 1 $. Now, recalling the definitions of $ z $ and $ Z^* $, we see that $ Z^* $ is a solution of \eqref{auxp}.
	
	We are now ready to prove that $ {\color{blue}x}^* $ is a solution of \eqref{lo-mat}. First, recall from \cite{ReFa10} that the nuclear norm of a matrix $ {\color{blue}x}\in\R^{m\times n} $ can be represented as:
	\begin{equation}\label{nucn}
	\!\!\!\!\!\|{\color{blue}x}\|_* = \min\limits_{{\color{blue}u, v}} \left\{\frac12\left(
	{\rm tr}({\color{blue}u})+{\rm tr}({\color{blue}v})\right):\; \begin{bmatrix}
	{\color{blue}u} & {\color{blue}x} \\ {\color{blue}x}^T & {\color{blue}v}
	\end{bmatrix}\succeq 0,\; {\color{blue}u}\in S^{m},\; {\color{blue}v}\in S^n
	\right\}.\footnote{We would like to point out that the minimum is attainable according to the discussions in \cite{ReFa10}.}
	\end{equation}
	One can then deduce that
	\begin{equation}\label{feasXstar}
	\begin{split}
	&\|{\color{blue}x}^*\|_*-\langle {\color{blue}\xi}, {\color{blue}x}^* \rangle \overset{\rm (a)}\leq \frac12\left({\rm tr}({\color{blue}u}^*)+{\rm tr}({\color{blue}v}^*)\right) - \langle {\color{blue}\xi}, {\color{blue}x}^*\rangle\\
	& \overset{\rm (b)}=
	\frac12{\rm tr}(Z^*)-\frac12 \langle \widetilde{\Xi}, Z^*\rangle = \frac12 \langle I-\widetilde{\Xi}, Z^*\rangle \leq
	\sigma,
	\end{split}
	\end{equation}
	where (a) follows from \eqref{nucn} and the definition of ${\color{blue}u}^*$ and ${\color{blue}v}^*$ in \eqref{Zstar}, and (b) uses the definitions of $ \widetilde\Xi $ and $ Z^* $. This shows that ${\color{blue}x}^*$ is feasible for \eqref{lo-mat}.
	
	Next, for any $ {\color{blue}\widehat x}\in\R^{m\times n} $ satisfying $ \| {\color{blue}\widehat x}\|_*-\langle {\color{blue}\xi}, {\color{blue}\widehat x}\rangle\leq \sigma $, we define $ ( {\color{blue}\widehat u}, {\color{blue}\widehat v}) $ as the minimizer in \eqref{nucn} corresponding to $\|{\color{blue}\widehat x}\|_*$.
	Let
\[{\color{blue}\widehat Z}=\begin{bmatrix}
	{\color{blue}\widehat u} & {\color{blue}\widehat x} \\ {\color{blue} \widehat x^T} & {\color{blue}\widehat v}
	\end{bmatrix}.
 \]
 We can check directly that $ {\color{blue}\widehat Z} $ is feasible for \eqref{auxp} by using the feasibility of $ {\color{blue}\widehat x} $ for \eqref{lo-mat} and the definitions of $ \widetilde A $ and $ \widetilde \Xi $. Then we have
	\[
	\langle {\color{blue}a}, {\color{blue}x}^*\rangle = \frac12 \langle \widetilde{A}, Z^*\rangle \overset{\rm (a)}\leq \frac12\langle \widetilde{A}, {\color{blue}\widehat{Z}}\rangle \overset{\rm
		(b)}= \langle {\color{blue}a}, {\color{blue}\widehat{x}}\rangle,
	\]
	where (a) uses the optimality of $ Z^* $ and the feasibility of $ {\color{blue}\widehat Z }$ for \eqref{auxp}, and (b) uses the definition of $ \widetilde{A} $. This together with \eqref{feasXstar} shows that $ {\color{blue}x}^*=2 \sigma z_1z_2^T $ solves \eqref{lo-mat}.
\qed \end{proof}

\begin{remark}
To obtain a closed form solution of \eqref{lo-mat}, we {\color{blue} need} to compute a generalized eigenvector $ z $ as shown in Theorem~\ref{cfs-mat}. Noticing that $I - \widetilde \Xi \succ 0$ (thanks to the fact that the spectral norm of ${\color{blue}\xi}$ is at most $\mu < 1$), such a generalized eigenvector $z$ can be found efficiently by  {\sf eigifp} \cite{YeGolub02}: {\sf eigifp} is an iterative solver based on Krylov subspace methods and only requires matrix vector multiplications $ \widetilde{A}{\color{blue}z} $ and $ \widetilde{\Xi}{\color{blue}z}$ in each iteration.
\end{remark}

\subsection{The case where $P_1$ is strongly convex}\label{sec:scsubp}
In this subsection, we assume that $ P_1 $ in \eqref{P0} is strongly convex with modulus $ \rho>0 $. We will argue that the corresponding $\LO$ involves a linear-optimization problem over a strongly convex set. Also, under suitable constraint qualifications, its solution involves computation of proximal mapping.

To this end, consider any given $ a\in\X\backslash\{0\} $, any $ y \in {\cal F} $ and any $ \xi\in\partial P_2(y) $.  Then the corresponding $ \LO(a, y, \xi) $ solves a problem of the following form:
\begin{equation}\label{LOsc}
	\begin{array}{cl}
	\min\limits_{x\in\X} & \langle a, x \rangle\ \ \ \ {\rm s.t.} \ \  \widetilde P_1(x) + \frac{\rho}{2}\|x\|^2 -\langle \xi, x\rangle \leq \widetilde{\sigma},
	\end{array}
\end{equation}
where $ \widetilde P_1 := P_1 -\frac{\rho}2\|\cdot\|^2 $ and $ \widetilde\sigma = \sigma + P_2(y) - \langle\xi, y\rangle $.
Moreover, $\widetilde P_1$ is convex.

Now, suppose that Slater's condition holds for the constraint set in \eqref{LOsc}.
Let $ x^* $ be a solution of \eqref{LOsc}. Since $a\neq 0$, we see from \cite[Corollary~28.1]{Ro70} and \cite[Theorem~28.3]{Ro70} that there exists $ \lambda_* > 0 $ 
such that
\begin{subnumcases}{}
	 x^* = \argmin_{x\in\X} \;\left\{\langle a, x\rangle + \lambda_*\left(\widetilde P_1(x) + \frac{\rho}2\|x\|^2- \langle \xi, x\rangle - \widetilde{\sigma}\right)\right\},	 \label{xlambda} \\
	P_1(x^*) - \langle \xi, x^*\rangle  = \widetilde{\sigma}. \label{eqconst}
\end{subnumcases}
Let $ \iota_* = \lambda_*^{-1} $. One can then deduce from \eqref{xlambda} that
\begin{equation}\label{xprox}
x^* = {\rm Prox}_{\frac 1{\rho}\widetilde P_1}\left(\frac{1}{\rho}(\xi - \iota_* a)\right),
\end{equation}
where $ {\rm Prox}_{g}(y):={\color{blue} \Argmin_{x\in \X}}\left\{ g(x) + \frac{1}{2}\|x-y\|^2\right\}$ is the proximal mapping of the proper closed function $ g $ at $ y $. Substituting the above expression into \eqref{eqconst}, we obtain a one-dimensional nonlinear equation in $ \iota_* $ as follows:
\begin{equation*}
P_1\left( {\rm Prox}_{\frac 1{\rho}\widetilde P_1}\left(\frac{1}{\rho}(\xi - \iota_* a)\right)\right) - \left\langle \xi,  {\rm Prox}_{\frac 1{\rho}\widetilde P_1}\left(\frac{1}{\rho}(\xi - \iota_* a)\right)\right\rangle  = \widetilde{\sigma}.
\end{equation*}
By standard root-finding procedures, one can solve for $ \iota_*>0 $. Then a solution $ x^* $ to \eqref{LOsc} can be obtained as \eqref{xprox}.

{\color{blue}
\begin{remark}\label{rem4}
We would like to point out that any DC function $ P_1-P_2 $ can be rewritten as the difference of strongly convex functions: Indeed, we trivially have for any $\rho > 0$ that $P_1 - P_2 = [P_1 + \frac\rho2\|\cdot\|^2] - [P_2 + \frac\rho2\|\cdot\|^2]$. Thus, the discussions in this section can be applied to the examples in Sections~\ref{gl} and \ref{mc} after transforming the DC functions therein to the difference of strongly convex functions. However, the $\LO$ involved will then require computing proximal mappings, which can be inefficient compared with the oracles described in Sections~\ref{gl} and \ref{mc}.
Specifically, as we shall discuss in Section~\ref{sec7} for the matrix completion problem, the corresponding output of \eqref{lo-mat} can be computed efficiently using {\sf eigifp} \cite{YeGolub02} even when $m$ and $n$ are huge, and can be chosen to have \emph{rank one}. This allows the use of the efficient SVD rank-one update technique \cite{Brand06} for updating the iterates. In contrast, computing the corresponding \eqref{LOsc} will involve the proximal operator in \eqref{xprox} with $ P_1$ being the nuclear norm in $ \R^{m\times n} $, which can be prohibitively expensive when $ m $ and $ n $ are huge; indeed, when $m$ and $n$ are huge, we may not even be able to explicitly form and compute the $\xi - \iota_* a$ in \eqref{xprox}, not to say to perform the root-finding procedure.
%
	\end{remark}
}

\section{Optimality condition}\label{sec4}
In this section, we discuss optimality conditions and define a stationarity measure for problem \eqref{P0} that are important for our algorithmic development later. We consider the following assumption.
\begin{assumption}\label{assumption1}
In \eqref{P0}, for any $y\in\mathcal{F}$ and $\xi\in\partial P_2(y)$, there exists $\xfeas_{(y, \xi)}\in\X$ so that the following holds:
\begin{equation}\label{assumine}
P_1(\xfeas_{(y, \xi)}) - P_2(y) - \langle\xi, \xfeas_{(y, \xi)} - y\rangle < \sigma.
\end{equation}
\end{assumption}

Note that Assumption~\ref{assumption1} holds in the examples described under Assumption~\ref{assump31} or \ref{assump32}. In fact, if we take $\xfeas_{(y, \xi)} = 0$ for any $y\in\mathcal{F}$ and $\xi\in\partial P_2(y)$, one can see that \eqref{assumine} holds for those $P_1$ and $P_2$. Moreover, it is interesting to note that Assumption~\ref{assumption1} depends on the choices of $P_1$ and $P_2$ in the DC decomposition of the constraint function. In contrast, the validity of gMFCQ is independent of the choices of $P_1$ and $P_2$.


We now study some relationships between gMFCQ and Assumption~\ref{assumption1}, and show in particular {\color{blue}that, under Assumption~\ref{assumption1},} every local minimizer of \eqref{P0} is a stationary point.
\begin{proposition}\label{slaequ}
Consider \eqref{P0}. Then the following statements hold:
\begin{enumerate}[{\rm (i)}]
	\item If Assumption~\ref{assumption1} holds, then the gMFCQ holds at every point in $\mathcal{F}$.
	\item If gMFCQ holds and $-P_2$ is regular at every point in $\{x\in\X:\; P_1(x) - P_2(x) = \sigma\}$, then Assumption~\ref{assumption1} holds. 
\end{enumerate}
\end{proposition}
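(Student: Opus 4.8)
The plan is to handle the two implications separately, each by a short contradiction/definition-chasing argument.

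For part (i): Suppose Assumption~\ref{assumption1} holds but gMFCQ fails at some $x^*\in\mathcal{F}$. Then $P_1(x^*)-P_2(x^*)=\sigma$ and $0\in\partial^\circ(P_1-P_2)(x^*)$. I would first use the Clarke sum rule \cite[Proposition~2.3.3]{Cl90} together with $\partial^\circ(-P_2)(x^*)=-\partial P_2(x^*)$ (valid since $P_2$ is convex, hence regular, so its Clarke subdifferential coincides with the convex one up to sign — see \cite[Proposition~2.3.6]{Cl90}) to write $0\in\partial P_1(x^*)-\xi$ for some $\xi\in\partial P_2(x^*)$; equivalently $\xi\in\partial P_1(x^*)$. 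By the subgradient inequality for $P_1$ at $x^*$ with this particular $\xi$, we get $P_1(x)-P_1(x^*)\ge\langle\xi,x-x^*\rangle$ for all $x$, i.e.
\[
P_1(x)-P_2(x^*)-\langle\xi,x-x^*\rangle\ \ge\ P_1(x^*)-P_2(x^*)\ =\ \sigma\qquad\text{for all }x\in\X.
\]
Taking $x=\xfeas_{(x^*,\xi)}$ contradicts \eqref{assumine}. Hence gMFCQ holds at every point of $\mathcal{F}$.

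For part (ii): Assume gMFCQ holds and $-P_2$ is regular on $\{x:\,P_1(x)-P_2(x)=\sigma\}$. Fix $y\in\mathcal{F}$ and $\xi\in\partial P_2(y)$, and we must produce $\xfeas_{(y,\xi)}$ satisfying \eqref{assumine}. If $P_1(y)-P_2(y)<\sigma$, then since $P_2(y)+\langle\xi,\cdot-y\rangle\le P_2(\cdot)$ by convexity, we have $P_1(y)-P_2(y)-\langle\xi,y-y\rangle = P_1(y)-P_2(y)<\sigma$, so $\xfeas_{(y,\xi)}=y$ works. The substantive case is $P_1(y)-P_2(y)=\sigma$: here $\xi\in\partial P_2(y)=\partial^\circ P_2(y)$, so $-\xi\in\partial^\circ(-P_2)(y)$, and by regularity of $-P_2$ at $y$ together with the Clarke sum rule, $\nabla?$—more precisely I would argue that $\nabla f$ plays no role and instead use gMFCQ directly: $0\notin\partial^\circ(P_1-P_2)(y)$, and since $-P_2$ is regular at $y$ the sum rule holds with equality, $\partial^\circ(P_1-P_2)(y)=\partial P_1(y)-\partial P_2(y)$, so in particular $\xi\notin\partial P_1(y)$ (taking the element $-\xi$ of $\partial^\circ(-P_2)(y)$, if $\xi\in\partial P_1(y)$ then $0\in\partial P_1(y)-\partial P_2(y)\subseteq\partial^\circ(P_1-P_2)(y)$, contradiction). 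Since $\xi\notin\partial P_1(y)$, the subgradient inequality for the convex function $P_1$ must be violated somewhere: there exists $\xfeas_{(y,\xi)}\in\X$ with $P_1(\xfeas_{(y,\xi)})-P_1(y)<\langle\xi,\xfeas_{(y,\xi)}-y\rangle$, which rearranges to $P_1(\xfeas_{(y,\xi)})-P_1(y)-\langle\xi,\xfeas_{(y,\xi)}-y\rangle<0=\sigma-P_1(y)$, i.e. $P_1(\xfeas_{(y,\xi)})-P_2(y)-\langle\xi,\xfeas_{(y,\xi)}-y\rangle<\sigma$ after using $P_1(y)=P_2(y)+\sigma$. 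This is exactly \eqref{assumine}.

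The main obstacle I anticipate is bookkeeping with the Clarke calculus in part (ii): one must be careful that the sum rule $\partial^\circ(P_1+(-P_2))(y)\subseteq\partial^\circ P_1(y)+\partial^\circ(-P_2)(y)$ is only an inclusion in general, so regularity of $-P_2$ (hence of the sum, since $P_1$ is convex and thus regular) is precisely what upgrades it to an equality and lets us conclude $\xi\notin\partial P_1(y)$ from $0\notin\partial^\circ(P_1-P_2)(y)$; conversely, the clean "negate the subgradient inequality" step is elementary convex analysis. Part (i) is essentially immediate once the Clarke sum rule is invoked. I would keep the exposition tight, citing \cite{Cl90} for the Clarke-subdifferential facts exactly as the authors do elsewhere in the paper.
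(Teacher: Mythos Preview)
Your proof is correct and follows essentially the same approach as the paper: part (i) is identical, and in part (ii) your direct argument (split into the trivial interior case and the boundary case, then show $\xi\notin\partial P_1(y)$ via the regularity-upgraded sum rule and negate the subgradient inequality) is simply the contrapositive of the paper's contradiction argument (assume no Slater point exists, deduce $y$ minimizes $P_1-\langle\xi,\cdot\rangle$ so $\xi\in\partial P_1(y)$, then use regularity to place $0$ in $\partial^\circ(P_1-P_2)(y)$ and contradict gMFCQ). The key Clarke-calculus point you flag---that regularity of both $P_1$ and $-P_2$ is needed to get the reverse inclusion $\partial P_1(y)-\partial P_2(y)\subseteq\partial^\circ(P_1-P_2)(y)$---is exactly where the paper invokes Corollary~1 of \cite[Proposition~2.9.8]{Cl90}.
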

\begin{remark}
From Proposition~\ref{loc-sta}, when the gMFCQ holds at every point in $\mathcal{F}$, we see that any local minimizer of \eqref{P0} is a stationary point of \eqref{P0}. Then we can deduce from Proposition~\ref{slaequ}(i) that any local minimizer of \eqref{P0} is a stationary point of \eqref{P0} when Assumption~\ref{assumption1} holds.
\end{remark}
\begin{proof}
(i): Suppose to the contrary that the gMFCQ fails at some $\bar{x}\in\mathcal{F}$, that is, there exists $\bar x$ with $P_1(\bar x) - P_2(\bar x) = \sigma$ but $0\in\partial^\circ (P_1 - P_2)(\bar x)$. This implies that $0\in\partial P_1(\bar x) - \partial P_2(\bar x)$, and hence there exists $\bar\xi\in\partial P_2(\bar x)$ satisfying $\bar\xi\in\partial P_1(\bar x)$. Moreover, by Assumption~\ref{assumption1}, there exists $\xfeas_{(\bar x, \bar\xi)}\in\X$ such that
\[
\sigma > P_1(\xfeas_{(\bar x, \bar\xi)}) - P_2(\bar x) - \langle\bar\xi, \xfeas_{(\bar x, \bar\xi)} - \bar x\rangle \overset{\rm (a)}\geq P_1(\bar x) - P_2(\bar x).
\]
where (a) follows from the convexity of $P_1$ and the fact that $\bar\xi\in\partial P_1(\bar x)$. The above display contradicts $P_1(\bar x) - P_2(\bar x) = \sigma$. Hence the gMFCQ holds at every point in $\mathcal{F}$.

(ii): Suppose to the contrary that Assumption~\ref{assumption1} fails. Then there exist $y\in {\cal F}$ and $\xi\in\partial P_2(y)$ such that for all $x\in\X$, $P_1(x) - P_2(y) - \langle\xi, x - y\rangle \geq \sigma$.
In particular, we have $P_1(y) - P_2(y) = P_1(y) - P_2(y) - \langle\xi, y - y\rangle \geq \sigma$, which together with $y\in {\cal F}$ implies
$P_1(y) - P_2(y) = \sigma$.

Since $P_1(y) - P_2(y) = \sigma$, we conclude that $y$ is a minimizer of the function $x\mapsto P_1(x) - P_2(y) - \langle\xi, x - y\rangle $. Then, we deduce from the first-order optimality condition that
\begin{equation*}
\begin{aligned}
0 &\in \partial P_1(y) - \xi\subseteq \partial P_1(y) - \partial P_2(y) \overset{\rm (a)}= \partial^\circ P_1(y) - \partial^\circ P_2(y) \\
& \overset{\rm (b)}= \partial^\circ P_1(y) + \partial^\circ(- P_2)(y) \overset{\rm (c)}= \partial^\circ(P_1 - P_2)(y),
\end{aligned}
\end{equation*}
where (a) follows from \cite[Theorem~6.2.2]{BoLe06}, (b) holds because of \cite[Proposition~2.3.1]{Cl90}, and (c) is true in view of Corollary~1 of \cite[Proposition~2.9.8]{Cl90} and the regularity properties of $-P_2$ (by assumption) and $P_1$ (thanks to convexity). The above display contradicts the gMFCQ. Thus, Assumption~\ref{assumption1} holds.
\qed \end{proof}


Next, we present equivalent characterizations of a stationary point of \eqref{P0}.
\begin{lemma}\label{equistat}
Consider \eqref{P0} and suppose that Assumption~\ref{assumption1} holds. Let $x^*\in {\cal F}$. Then the following statements are equivalent:
\begin{enumerate}[{\rm (i)}]
  \item $x^*$ is a stationary point of \eqref{P0}.
  \item There exists $\xi^*\in\partial P_2(x^*)$ such that $x^*\in\Argmin_{x\in  {\cal F}(x^*,\xi^*)}\{\langle \nabla f(x^*), x\rangle\}$.\footnote{See \eqref{Fyxi} for the definition of ${\cal F}(x^*,\xi^*)$.}
  \item There exist $\xi^*\in\partial P_2(x^*)$ and $u^*\in\Argmin_{x\in {\cal F}(x^*,\xi^*)}\{\langle \nabla f(x^*), x\rangle\}$ such that
	\[
	\langle \nabla f(x^*), u^* - x^*\rangle = 0.
	\]
\end{enumerate}
\end{lemma}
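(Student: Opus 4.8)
The plan is to establish the cycle of implications (i) $\Rightarrow$ (iii) $\Rightarrow$ (ii) $\Rightarrow$ (i), since (iii) $\Rightarrow$ (ii) is essentially immediate (an element $u^*$ attaining the minimum with zero duality gap means $x^*$ itself attains the minimum, because $x^* \in {\cal F}(x^*,\xi^*)$ always and $\langle\nabla f(x^*), u^*\rangle = \langle\nabla f(x^*), x^*\rangle$). I would start from (i): if $x^*$ is stationary, there is $\lambda_* \ge 0$ with $0 \in \nabla f(x^*) + \lambda_*(\partial P_1(x^*) - \partial P_2(x^*))$ and complementarity. Pick $\xi^* \in \partial P_2(x^*)$ and $\eta^* \in \partial P_1(x^*)$ realizing this, so $\nabla f(x^*) = -\lambda_*(\eta^* - \xi^*)$. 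The goal is to show $x^*$ minimizes the linear function $\langle \nabla f(x^*), \cdot\rangle$ over the convex set ${\cal F}(x^*,\xi^*)$; equivalently, $-\nabla f(x^*) \in {\cal N}_{{\cal F}(x^*,\xi^*)}(x^*)$, the normal cone of the convex set ${\cal F}(x^*,\xi^*)$ at $x^*$.

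The two cases of the complementarity condition have to be handled separately. If $\lambda_* = 0$, then $\nabla f(x^*) = 0$ and every point of ${\cal F}(x^*,\xi^*)$ trivially minimizes the (zero) linear functional, giving (ii) and then (iii) with $u^* = x^*$. If $\lambda_* > 0$, then $P_1(x^*) - P_2(x^*) = \sigma$, so $x^*$ lies on the boundary of the convex constraint $g(x) := P_1(x) - \langle \xi^*, x - x^*\rangle - P_2(x^*) - \sigma \le 0$ defining ${\cal F}(x^*,\xi^*)$ (note $g(x^*) = 0$). Here I would invoke Slater's condition for this convex set — which is exactly what Assumption~\ref{assumption1} furnishes via the point $\xfeas_{(x^*,\xi^*)}$ satisfying $g(\xfeas_{(x^*,\xi^*)}) < 0$ — so that the normal cone has the standard representation ${\cal N}_{{\cal F}(x^*,\xi^*)}(x^*) = \{\mu\, \zeta : \mu \ge 0,\ \zeta \in \partial g(x^*)\} = \bigcup_{\mu \ge 0} \mu(\partial P_1(x^*) - \xi^*)$. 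Since $\eta^* \in \partial P_1(x^*)$, we have $\eta^* - \xi^* \in \partial g(x^*)$, hence $-\nabla f(x^*) = \lambda_*(\eta^* - \xi^*) \in {\cal N}_{{\cal F}(x^*,\xi^*)}(x^*)$, which is precisely the optimality condition for $x^*$ in the convex problem $\min_{x \in {\cal F}(x^*,\xi^*)} \langle \nabla f(x^*), x\rangle$. This gives (ii), and (iii) follows by taking any minimizer $u^*$: since $x^*$ is also a minimizer, $\langle \nabla f(x^*), u^* - x^*\rangle = 0$.

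For (ii) $\Rightarrow$ (i) I would reverse the argument. Given $\xi^* \in \partial P_2(x^*)$ with $x^* \in \Argmin_{x \in {\cal F}(x^*,\xi^*)} \langle \nabla f(x^*), x\rangle$, again split cases. If $P_1(x^*) - P_2(x^*) < \sigma$, then $x^*$ is in the interior of ${\cal F}(x^*,\xi^*)$ (by continuity of $P_1$), so the linear functional being minimized there forces $\nabla f(x^*) = 0$, and $x^*$ is stationary with $\lambda_* = 0$. If $P_1(x^*) - P_2(x^*) = \sigma$, use Slater's condition (Assumption~\ref{assumption1}) so that the KKT conditions for the convex problem hold: there is $\lambda_* \ge 0$ with $0 \in \nabla f(x^*) + \lambda_*(\partial P_1(x^*) - \xi^*)$ and $\lambda_*(P_1(x^*) - \langle\xi^*, x^* - x^*\rangle - P_2(x^*) - \sigma) = \lambda_* \cdot 0 = 0$. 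Since $\xi^* \in \partial P_2(x^*)$, we get $0 \in \nabla f(x^*) + \lambda_*(\partial P_1(x^*) - \partial P_2(x^*))$, and complementarity with the original constraint holds because $P_1(x^*) - P_2(x^*) - \sigma = 0$; hence $x^*$ is stationary. The main obstacle — and the only place the hypothesis is genuinely used — is justifying the exact normal-cone / KKT representation for the convex set ${\cal F}(x^*,\xi^*)$, which requires a constraint qualification; this is why Assumption~\ref{assumption1} (a Slater-type condition, uniform over $(y,\xi)$) is imposed rather than the weaker gMFCQ, and care is needed to check that the Slater point $\xfeas_{(x^*,\xi^*)}$ strictly satisfies the \emph{linearized} constraint $g$, which it does by definition of the assumption.
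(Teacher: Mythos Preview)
Your proposal is correct and follows essentially the same approach as the paper. The paper compresses the equivalence (i)$\Leftrightarrow$(ii) into a one-line appeal to \cite[Corollary~28.2.1, Theorem~28.3]{Ro70} after observing that Assumption~\ref{assumption1} supplies a Slater point for ${\cal F}(x^*,\xi^*)$, whereas you unpack this KKT/normal-cone argument explicitly with a case split on $\lambda_*$ (resp.\ on whether the constraint is active); the (ii)$\Leftrightarrow$(iii) part is handled identically in both.
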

\begin{proof}
(i)$\Leftrightarrow$(ii): By Assumption~\ref{assumption1}, for any $x^*\in\mathcal{F}$ and for any $\xi^*\in\partial P_2(x^*)$, one
can see that the constraint set ${\cal F}(x^*,\xi^*) = \{x:\; P_1(x) - P_2(x^*) - \langle \xi^*, x - x^*\rangle \leq
\sigma\}$ contains a Slater point $\xfeas_{(x^*, \xi^*)}$. Then, in view of \cite[Corollary~28.2.1, Theorem~28.3]{Ro70}, we see that (i) is equivalent to (ii).
	
(ii)$\Rightarrow$(iii): Let $\xi^*$ be as in item (ii). Then $\Argmin_{x\in  {\cal F}(x^*,\xi^*)}\{\langle \nabla f(x^*), x\rangle\}$ is nonempty thanks to \eqref{constrain}. Now, pick any $u^*\in\Argmin_{x\in  {\cal F}(x^*,\xi^*)}\{\langle \nabla f(x^*), x\rangle\}$. {\color{blue} As $x^*\in {\cal F}(x^*,\xi^*)$, we obtain} $P_1(u^*) - P_2(x^*) - \langle \xi^*, u^* - x^*\rangle \leq \sigma$ and
\begin{equation*}
\langle \nabla f(x^*), u^* \rangle \leq \langle \nabla f(x^*), x^*\rangle.
\end{equation*}
 Next, since $x^*\in\Argmin_{x\in  {\cal F}(x^*,\xi^*)}\{\langle \nabla f(x^*), x\rangle\}$, one has
\[
\langle \nabla f(x^*), x^* \rangle \leq \langle \nabla f(x^*), u^*\rangle
\]
Combining the above two displays yields $\langle \nabla f(x^*), u^* - x^*\rangle = 0$.

(iii)$\Rightarrow$(ii): Let $\xi^*$ and $u^*$ be as in item (iii). Then
\[
\langle \nabla f(x^*), x^*\rangle = \langle \nabla f(x^*), u^*\rangle = \min\limits_{x\in {\cal F}(x^*,\xi^*)}\{\langle \nabla f(x^*), x\rangle\}.
\]
Now, since $x^*\in\mathcal{F}$ and hence $x^*\in {\cal F}(x^*,\xi^*)$, we conclude that (ii) holds.
\qed \end{proof}

Finally, we will introduce a stationarity measure for \eqref{P0} which plays an important role in our convergence analysis later. Specifically, we define a merit function $G: \mathcal{F} \rightarrow \mathbb{R}$ by
\begin{equation}\label{defG}
G(x)=\inf_{\xi \in \partial P_2(x)} \max_{y \in \mathcal{F}(x, \xi)} \langle \nabla f(x), x-y\rangle\ \ \ \ \mbox{ for all } x \in \mathcal{F},
\end{equation}
where ${\cal F}(x,\xi)$ is defined as in \eqref{Fyxi}. Note that the maximum in the definition of $G$ is attained.\footnote{
The attainment of the maximum in the definition of $G$ follows from the fact that $\mathcal{F}(x, \xi)$ is a nonempty compact set for all $x \in \mathcal{F}$ and $\xi \in \partial P_2(x)$; see \eqref{constrain}.} In addition, notice that when $ P_2\equiv 0 $, the function $ G $ in \eqref{defG} reduces to the $g$ in \cite[Eq.~(2)]{Jaggi13}: this latter function was used as a measure for optimality in \cite{Jaggi13} where $f$ was assumed to be convex. Here, for problem \eqref{P0} with a possibly nonzero $P_2$, we argue that $G$ can be regarded as a measure for ``proximity to stationarity" for any feasible points of \eqref{P0}, under Assumption~\ref{assumption1}.
\begin{theorem}[Stationarity measure]\label{lemma:1}
Consider \eqref{P0} and suppose that Assumption~\ref{assumption1} holds.  Then, the following statements hold for the $G$ in \eqref{defG}:
\begin{enumerate}[{\rm (i)}]
\item $G(x) \ge 0$ for all $x \in \mathcal{F}$.
\item Let $\{x^k\}\subseteq  \mathcal{F}$.  If $G(x^k) \rightarrow 0$ and $x^k \rightarrow x^*$ for some $x^*$, then $x^*\in {\cal F}$ and is a stationary point of  \eqref{P0}.
\item For $x^* \in \mathcal{F}$, we have $G(x^*)=0$  if and only if $x^*$ is a stationary point of \eqref{P0}.
\end{enumerate}
\end{theorem}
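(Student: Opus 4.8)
The plan is to prove the three items of Theorem~\ref{lemma:1} in order, relying heavily on Lemma~\ref{equistat} and the error bound in Lemma~\ref{ErrorBounded}. For item (i), fix $x \in \mathcal{F}$ and any $\xi \in \partial P_2(x)$. Since $x \in \mathcal{F}(x,\xi)$ (see \eqref{constrain}), the inner maximum $\max_{y \in \mathcal{F}(x,\xi)} \langle \nabla f(x), x-y\rangle$ is at least the value at $y = x$, which is $0$; taking the infimum over $\xi \in \partial P_2(x)$ preserves this lower bound, so $G(x) \ge 0$.

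For item (iii), suppose first that $x^* \in \mathcal{F}$ is a stationary point. By Lemma~\ref{equistat}, (i)$\Leftrightarrow$(ii), there is $\xi^* \in \partial P_2(x^*)$ with $x^* \in \Argmin_{x \in \mathcal{F}(x^*,\xi^*)} \langle \nabla f(x^*), x\rangle$; equivalently $\max_{y \in \mathcal{F}(x^*,\xi^*)} \langle \nabla f(x^*), x^* - y\rangle = 0$. Since $G(x^*)$ is the infimum over $\xi \in \partial P_2(x^*)$ of quantities that are all $\ge 0$ by (i)'s argument, and one of them equals $0$, we get $G(x^*) = 0$. Conversely, if $G(x^*) = 0$, I would argue the infimum in \eqref{defG} is attained: $\partial P_2(x^*)$ is nonempty, convex and compact (as $P_2$ is real-valued convex, hence locally Lipschitz), and the map $\xi \mapsto \max_{y \in \mathcal{F}(x^*,\xi)} \langle \nabla f(x^*), x^*-y\rangle$ should be shown lower semicontinuous in $\xi$ — this uses that as $\xi$ varies the sets $\mathcal{F}(x^*,\xi)$ vary continuously (Assumption~\ref{assumption1} gives a uniform Slater point, so one can apply Lemma~\ref{ErrorBounded} to control how the max moves). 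Attainment then yields $\xi^* \in \partial P_2(x^*)$ with $\max_{y \in \mathcal{F}(x^*,\xi^*)} \langle \nabla f(x^*), x^*-y\rangle = 0$, i.e. $x^* \in \Argmin_{x \in \mathcal{F}(x^*,\xi^*)} \langle \nabla f(x^*), x\rangle$, so $x^*$ is stationary by Lemma~\ref{equistat}.

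For item (ii), the same lower-semicontinuity/attainment idea must be made robust under perturbation of the base point. Given $\{x^k\} \subseteq \mathcal{F}$ with $G(x^k) \to 0$ and $x^k \to x^*$, closedness of $\mathcal{F}$ gives $x^* \in \mathcal{F}$. For each $k$, pick (using attainment, or a near-infimizer within $1/k$) $\xi^k \in \partial P_2(x^k)$ with $\max_{y \in \mathcal{F}(x^k,\xi^k)} \langle \nabla f(x^k), x^k-y\rangle \le G(x^k) + 1/k \to 0$. The sequence $\{\xi^k\}$ is bounded (local boundedness of $\partial P_2$ near $x^*$), so along a subsequence $\xi^k \to \xi^*$, and by the closed-graph property of the convex subdifferential $\xi^* \in \partial P_2(x^*)$. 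Then I would pass to the limit in the inequality $\langle \nabla f(x^k), x^k - y\rangle \le G(x^k)+1/k$ for suitably chosen $y$: concretely, for any $y^* \in \mathcal{F}(x^*,\xi^*)$ I would construct $y^k \in \mathcal{F}(x^k,\xi^k)$ with $y^k \to y^*$ (projecting $y^*$ onto $\mathcal{F}(x^k,\xi^k)$ and bounding the distance via Lemma~\ref{ErrorBounded}, using the uniform Slater point from Assumption~\ref{assumption1} to keep $\delta$ bounded away from $0$), so that $\langle \nabla f(x^*), x^*-y^*\rangle = \lim_k \langle \nabla f(x^k), x^k-y^k\rangle \le \limsup_k (G(x^k)+1/k) = 0$. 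Since $y^* \in \mathcal{F}(x^*,\xi^*)$ was arbitrary and $x^* \in \mathcal{F}(x^*,\xi^*)$ gives the reverse inequality $\ge 0$, we conclude $\max_{y \in \mathcal{F}(x^*,\xi^*)} \langle \nabla f(x^*), x^*-y\rangle = 0$, and Lemma~\ref{equistat} finishes the proof.

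The main obstacle is the continuity/stability of the parametrized family of feasible sets $\mathcal{F}(x,\xi)$ as $(x,\xi)$ varies — specifically, producing the approximating points $y^k \to y^*$ with $y^k \in \mathcal{F}(x^k,\xi^k)$. The clean way to handle this is exactly Lemma~\ref{ErrorBounded}: writing $\mathcal{F}(x^k,\xi^k) = \{x : 0 \in g_k(x) + \R_+\}$ with $g_k(x) = P_1(x) - P_2(x^k) - \langle \xi^k, x-x^k\rangle - \sigma$ (an $\R_+$-convex, i.e. ordinary convex, function), and using $\xfeas_{(x^k,\xi^k)}$ from Assumption~\ref{assumption1} as the Slater point $x^s$, one gets $\mathrm{dist}(y^*,\mathcal{F}(x^k,\xi^k)) \le \frac{\|y^* - \xfeas_{(x^k,\xi^k)}\|}{\delta_k}\,\mathrm{dist}(0, g_k(y^*)+\R_+)$; the numerator and $1/\delta_k$ need a uniform bound (which should follow from compactness of $\mathcal{F}$ and continuity, possibly after noting the Slater points and margins can be chosen uniformly on a neighborhood of $x^*$), while $\mathrm{dist}(0, g_k(y^*)+\R_+) = \max\{g_k(y^*),0\} \to \max\{g_\infty(y^*),0\} = 0$ by continuity of $P_1$ and convergence $(x^k,\xi^k)\to(x^*,\xi^*)$. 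Everything else is routine passage to the limit using continuity of $\nabla f$.
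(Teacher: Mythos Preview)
Your proposal is correct and follows essentially the same route as the paper: both hinge on Lemma~\ref{equistat} for the stationarity characterization and on Lemma~\ref{ErrorBounded} (Robinson's error bound) to show that points of $\mathcal{F}(x^*,\xi^*)$ can be approximated by points of $\mathcal{F}(x^k,\xi^k)$, so that one may pass to the limit in the linear inequalities. Two places where the paper is slightly tighter than your sketch: (a) in item~(ii), rather than using the varying Slater points $\xfeas_{(x^k,\xi^k)}$ (which, as you correctly flag, would require a separate argument for uniform margins $\delta_k$), the paper fixes the \emph{single} Slater point $\xfeas_{(x^*,\xi^*)}$ at the limit and observes that $g_k(\xfeas_{(x^*,\xi^*)}) \to g_*(\xfeas_{(x^*,\xi^*)}) = -\delta_1 < 0$, so this same point is a Slater point for every $\Omega_k$ with $k$ large, with uniform margin $\delta_1/2$; (b) the ``only if'' half of (iii) is obtained in the paper simply by applying (ii) to the constant sequence $x^k\equiv x^*$, so your separate lower-semicontinuity-in-$\xi$ argument is not needed.
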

\begin{proof}
Item {\rm (i)} holds in view of \eqref{constrain} and the definition of $G$.

To prove {\rm (ii)}, let $\{x^k\} \subseteq \mathcal{F}$ and $x^k \rightarrow x^*$ with $G(x^k) \rightarrow 0$. Then $x^*\in\mathcal{F}$ because ${\cal F}$ is closed.
Next, notice that
\[
0 \le G(x^k)=\inf_{\xi \in \partial P_2(x^k)} \max_{y \in \mathcal{F}(x^k, \xi)} \langle \nabla f(x^k), x^k-y\rangle \rightarrow 0.
\]
So there exist $\xi^k \in \partial P_2(x^k)$  and $y^k \in {\rm Argmax}_{y \in \mathcal{F}(x^k, \xi^k)}\langle \nabla f(x^k), x^k-y\rangle$ such that
\begin{equation}\label{eq:use1}
\langle \nabla f(x^k), x^k-y^k \rangle \rightarrow 0.
\end{equation}
Note that $\{y^k\}\subseteq {\cal F}$ in view of \eqref{constrain} and is hence bounded. Moreover, $\{x^k\}$ is bounded and hence $\{\xi^k\}$ is bounded in view of \cite[Theorem~2.6]{Tuy98} and the continuity and convexity of $P_2$.
Passing to convergent subsequences if necessary, we may assume that $y^k \rightarrow y^*$ and $\xi^k \rightarrow \xi^*$ for some
 $y^* \in \mathcal{F}$ and $\xi^* \in \partial P_2(x^*)$ (thanks to the closedness of $\partial P_2$). Passing to the limit in \eqref{eq:use1} and using the continuity of $\nabla f$, we have
\begin{equation}\label{eq:use2}
\langle \nabla f(x^*), x^*-y^* \rangle =0.
\end{equation}

We next claim that
\begin{equation}\label{minsub}
x^* \in \Argmin\limits_{x\in\X}\{\langle\nabla f(x^*), x\rangle:\; P_1(x) - P_2(x^*) - \langle\xi^*, x -
x^*\rangle\leq\sigma \}.
\end{equation}
Granting this, in view of Lemma~\ref{equistat}(i), (ii) and the fact that $\xi^*\in \partial P_2(x^*)$, we can then conclude that $x^*$ is a stationary point of \eqref{P0}.
	
It now remains to establish \eqref{minsub}. To this end, we first define the following:
\begin{equation}\label{Omegag}
\begin{array}{ll}
\!\!\!\!\!g_{k}(x)\!:=\!  P_1(x) \!-\! P_2(x^{k}) \!-\!\langle\xi^{k}, x -x^{k}\rangle \!-\! \sigma, & \Omega_{k} \!:=\! \{x\in\X:\, g_{k}(x) \leq 0\}
\ \forall k,\!\!\!\!\!\!\\
\!\!\!\!\!g_{*}(x)\!:=\!  P_1(x) \!-\! P_2(x^*) \!-\!\langle\xi^*, x -x^*\rangle \!-\! \sigma,& \Omega_{*} \!:=\! \{x\in\X:\, g_{*}(x) \leq 0\}.
\end{array}
\end{equation}
Then, because $x^{k}\to x^*$ and $\xi^{k}\to \xi^*$, we have $\lim_{k\to\infty}g_{k}(x) = g_{*}(x)$ for any $x\in\R^n$.

Since $x^*\in\mathcal{F}$ and $\xi^*\in\partial P_2(x^*)$, by Assumption~\ref{assumption1}, there exists $\xfeas_{({x^*, \xi^*})}$ such that $g_{*}(\xfeas_{({x^*, \xi^*})}) < 0$, i.e. there exists $\delta_1 > 0$ such that $g_{*}(\xfeas_{({x^*, \xi^*})}) = -\delta_1 < 0$. Moreover, since $g_{k}(\xfeas_{({x^*, \xi^*})}) \to g_{*}(\xfeas_{({x^*, \xi^*})})$, there exists $N_0 > 0$, such that for any $k > N_0$ we have that $g_{k}(\xfeas_{({x^*, \xi^*})}) \leq g_*(\xfeas_{({x^*, \xi^*})}) + \frac{\delta_1}{2} = -\frac{\delta_1}{2}  < 0$.

We now apply Lemma~\ref{ErrorBounded} with $\Omega := \Omega_{k}$, ${\cal K} := \R_+$, $x^s = \xfeas_{({x^*, \xi^*})}$ and $\delta = \frac{\delta_1}{2}$ to obtain, for each $k > N_0$, that,
\begin{equation}\label{erro}
\begin{aligned}
\d(z, \Omega_{k})&\leq 2\delta_1^{-1}\|z - \xfeas_{(x^*, \xi^*)}\|\d(0, g_{k}(z) + \R_+)\\
&\leq 4M\delta_1^{-1}\d(0, g_{k}(z) + \R_+) = 4M\delta_1^{-1}[g_{k}(z)]_+, \
\forall z\in B(0, M),
\end{aligned}
\end{equation}
where $M$ is defined as in \eqref{constrain}, and the second inequality follows from the fact that $\|\xfeas_{(x^*, \xi^*)}\| \le M$ (thanks to \eqref{constrain} and the definition of $\xfeas_{(x^*, \xi^*)}$).
	
Fix any $u\in \Omega_*$. Since $x^*\in\mathcal{F}$ and $\xi^*\in\partial P_2(x^*)$, by \eqref{constrain}, we have $u\in B(0, M)$.
Moreover, applying \eqref{erro} with $z = u$, we have, for each $k > N_0$,
\[
\|u - {\rm Proj}_{\Omega_{k}}(u)\| = \d(u, \Omega_{k})\leq
4M\delta_1^{-1}[g_{k}(u)]_+,
\]
where ${\rm Proj}_{\Omega_{k}}$ is the projection mapping onto $\Omega_k$.
Since $P_1(u) - P_2(x^*) - \langle\xi^*, u - x^*\rangle \le \sigma $ (thanks to $u\in \Omega_*$), we have $[g_{k}(u)]_+\rightarrow 0$ since $x^{k}\rightarrow
x^*$ and $\xi^{k}\rightarrow \xi^*$. From this relation and the above display, we have shown that
\begin{equation}\label{limitproju}
\mbox{For each }u\in \Omega_*, \mbox{ it holds that } \lim_{k\to \infty}{\rm Proj}_{\Omega_{k}}(u) = u,
\end{equation}
where $\Omega_*$ and $\Omega_{k}$ are as in \eqref{Omegag}. Now, since $y^k \in {\rm Argmax}_{y \in \mathcal{F}(x^k, \xi^k)}\langle \nabla f(x^k), x^k-y\rangle$ and noting that $\Omega_k = \mathcal{F}(x^k, \xi^k)$, we see that for any $u\in \Omega_*$,
\[
\langle\nabla f(x^{k}), y^{k} - x^{k}\rangle \leq \langle\nabla f(x^{k}), {\rm Proj}_{\Omega_{k}}(u) -
x^{k}\rangle.
\]
Passing to the limits and noting \eqref{limitproju}, we have
\[
\langle\nabla f(x^*), y^* - x^*\rangle \leq \langle\nabla f(x^*), u - x^*\rangle.
\]
Moreover, in view of \eqref{eq:use2}, we have
\[
\langle\nabla f(x^*), x^* - x^*\rangle = 0 = \langle\nabla f(x^*), y^* - x^*\rangle \leq \langle\nabla f(x^*), u
- x^*\rangle,
\]
Since $u\in \Omega_*$ is chosen arbitrarily, we deduce that \eqref{minsub} holds.

To see {\rm (iii)}, applying {\rm (ii)} with $x^k\equiv x^* \in \mathcal{F}$, we see that if $G(x^*)=0$ then $x^*$ is a stationary point of \eqref{P0}. Conversely, suppose that $x^*$ is a stationary point of \eqref{P0}. Then, from Lemma \ref{equistat}, there exists $\xi^* \in \partial P_2(x^*)$ such that
\[
x^* \in \Argmin_{y \in \mathcal{F}(x^*, \xi^*)} \langle \nabla f(x^*), y-x^*\rangle = \Argmax_{y \in \mathcal{F}(x^*, \xi^*)} \langle \nabla f(x^*), x^*-y\rangle.
\]
This shows that $G(x^*) \le 0$. Note that $x^* \in \mathcal{F}$ and hence $G(x^*) \ge 0$ from item (i). Therefore, we see that $G(x^*)=0$.
\qed \end{proof}

\section{Algorithm and convergence analysis}\label{sec:convergence}

We present our {\color{blue}(basic) FW-type} algorithm for solving \eqref{P0} as Algorithm~\ref{FW_nonconvex} below, which involves an $\LO$ (defined in Definition~\ref{LOdef}) and a line-search strategy \eqref{linesearch}. Notice that the structure of the algorithm is similar to that of the classical FW algorithm \cite{LaZh16,Lacoste15,Jaggi13}. The main difference is that the constraint sets in the linear-optimization oracles involved in the classic FW algorithm are the same at each step, while the constraint sets in the $\LO$s in  Algorithm~\ref{FW_nonconvex} change with iteration. {\color{blue} We also note that the linear-optimization oracles (in the classic FW algorithm) can be used to compute the feasible initial point for the classic FW algorithm, while our Algorithm 2 needs a feasible point $x^0 \in {\cal F}$ so as to construct the constraint set of the {\em first} $\LO$.}
\begin{algorithm}
	\caption{Frank-Wolfe{\color{blue}-type} algorithm for \eqref{P0} under Assumption~\ref{assumption1}}\label{FW_nonconvex}
	\begin{algorithmic}
		\State \vspace{-0.15 cm}
		\begin{description}
		  \item[\bf Step 0.] Choose $x^0\in \mathcal{F}$, $c,\, \eta\in (0, 1)$, and a sequence $\{\alpha^0_k\}\subseteq(0,1]$ with $\inf_k\alpha^0_k > 0$. Set $k = 0$.  \vspace{0.1 cm}
		  \item[\bf Step 1.] Pick $\xi^k\in\partial P_2(x^k)$ and let $u^k$ be an output of $\LO(\nabla f(x^k), x^k, \xi^k)$ (see Definition~\ref{LOdef}).  Let $ d^k = d^k_{\rm fw} :=u^k-x^k $.

 \noindent If $\langle\nabla f(x^k), d^k\rangle = 0$, terminate.	 \vspace{0.1 cm}
		  \item[\bf Step 2.] Find $ \alpha_k = \alpha_k^0\eta^{j_k} $ with $ j_k $ being the smallest nonnegative integer such that
			\begin{equation}\label{linesearch}
			 f(x^k + \alpha_k d^k) \le f(x^k) + c\alpha_k\langle\nabla f(x^k), d^k \rangle.
			\end{equation}		
		  \item[\bf Step 3.] Set $\widehat x^{k+1} = x^k + \alpha_k d^k$. Choose $x^{k+1}\in\mathcal{F}$ such that $ f(x^{k+1})\leq f(\widehat x^{k+1}) $. Update $k \leftarrow k+1$ and go to Step 1.
		\end{description}
	\end{algorithmic}
\end{algorithm}

Before we analyze the well-definedness and other theoretical properties of Algorithm~\ref{FW_nonconvex}, we first comment on the termination condition in Algorithm~\ref{FW_nonconvex}.
\begin{remark}[Termination condition in Step 1]\label{term:step1}
{\color{blue}Notice that $u^k$ is an output of $\LO(\nabla f(x^k), x^k, \xi^k)$, which means that}
\[
{\color{blue} u^k\in\Argmin_{x\in {\cal F}(x^k,\xi^k)}\{\langle \nabla f(x^k), x - x^k\rangle\}.}
\]
If $\langle\nabla f(x^k), u^k - x^k\rangle = 0$, then we deduce from Lemma~\ref{equistat}(iii) that $x^k$ is a stationary point of \eqref{P0}. Moreover, if $x^k$ is not a stationary point of \eqref{P0}, we have that $\langle\nabla f(x^k), u^k - x^k\rangle < 0$ (since we always have $\langle\nabla f(x^k), u^k - x^k\rangle \leq 0$, thanks to $x^k\in {\cal F}$ and hence $x^k\in\mathcal{F}(x^k,\xi^k)$).
\end{remark}

Next we show that Algorithm~\ref{FW_nonconvex} is well-defined. By Remark~\ref{term:step1}, it suffices to show that, if $x^k\in {\cal F}$ for some $k\ge 0$ is not a stationary point of \eqref{P0} and a $ \xi^k\in\partial P_2(x^k) $ is given, then $\LO(\nabla f(x^k), x^k, \xi^k)$ has an output, the line-search step in Step 2 terminates in finitely many inner iterations, and an $x^{k+1}\in\mathcal F$ can be generated. This together with an induction argument would establish the well-definedness of Algorithm~\ref{FW_nonconvex}.

\begin{proposition}[Well-definedness of Algorithm~\ref{FW_nonconvex}]\label{Prop1}
Consider Algorithm~\ref{FW_nonconvex} for solving \eqref{P0} under Assumption~\ref{assumption1}. Suppose that an $x^k\in\mathcal{F}$ is generated at the end of {\color{blue}an iteration} of Algorithm~\ref{FW_nonconvex} for some $k\geq 0$ and suppose that $ x^k $ is not a stationary point of \eqref{P0}. Then the following
statements hold for this $k$:
\begin{enumerate}[{\rm (i)}]
  \item The $\LO(\nabla f(x^k), x^k, \xi^k)$ is well-defined, i.e. the corresponding linear-optimization problem has an optimal solution.
  \item Step 2 of Algorithm~\ref{FW_nonconvex} terminates in finitely many inner iterations.
  \item $\widehat{x}^{k+1}\in {\cal F}$.
\end{enumerate}
Thus, an $x^{k+1}\in {\cal F}$ can be generated at the end of the $(k+1)$th iteration of Algorithm~\ref{FW_nonconvex}.
\end{proposition}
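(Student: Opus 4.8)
The plan is to dispatch the three items in order, since each one reduces to assembling facts already recorded in the excerpt. For item~(i), I would simply invoke Remark~\ref{welldefined}: by hypothesis $x^k\in\mathcal F$ and, in Step~1, $\xi^k\in\partial P_2(x^k)$ is chosen, so the chain of inclusions in \eqref{constrain} shows that $\mathcal F(x^k,\xi^k)$ is nonempty (it contains $x^k$) and compact (it lies in $B(0,M)$). Minimizing the linear function $x\mapsto\langle\nabla f(x^k),x\rangle$ over this nonempty compact set produces a nonempty set of minimizers, which is precisely the assertion that $\LO(\nabla f(x^k),x^k,\xi^k)$ has an output.

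For item~(ii), the first step is to observe that $d^k=u^k-x^k$ is a strict descent direction: since $x^k$ is assumed \emph{not} to be a stationary point of \eqref{P0}, Remark~\ref{term:step1} gives $\langle\nabla f(x^k),d^k\rangle<0$ (otherwise the algorithm would have terminated in Step~1). Then I would run the standard Armijo termination argument: because $f$ is continuously differentiable on $\X$, the first-order expansion gives $f(x^k+\alpha d^k)=f(x^k)+\alpha\langle\nabla f(x^k),d^k\rangle+o(\alpha)\le f(x^k)+c\alpha\langle\nabla f(x^k),d^k\rangle$ for all sufficiently small $\alpha>0$, using $c\in(0,1)$ and $\langle\nabla f(x^k),d^k\rangle<0$. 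Since $\alpha_k^0\eta^{j}\downarrow0$ as $j\to\infty$, the test \eqref{linesearch} holds for all large $j$, so the smallest such $j_k$ is finite.

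Item~(iii) is where the structure of the new oracle is used. The key observations are that $\mathcal F(x^k,\xi^k)=\{x:\,P_1(x)-\langle\xi^k,x-x^k\rangle-P_2(x^k)\le\sigma\}$ is \emph{convex} (its defining function is $P_1$ plus an affine term, so a convex function, and sublevel sets of convex functions are convex); that both $x^k$ and the oracle output $u^k$ belong to $\mathcal F(x^k,\xi^k)$; and that $\alpha_k=\alpha_k^0\eta^{j_k}\in(0,1]$ since $\alpha_k^0\in(0,1]$, $\eta\in(0,1)$ and $j_k\ge0$. Hence $\widehat x^{k+1}=x^k+\alpha_k d^k=(1-\alpha_k)x^k+\alpha_k u^k$ is a convex combination of two points of $\mathcal F(x^k,\xi^k)$, so $\widehat x^{k+1}\in\mathcal F(x^k,\xi^k)\subseteq\mathcal F$ by \eqref{constrain}. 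Finally, since $\widehat x^{k+1}\in\mathcal F$, the trivial choice $x^{k+1}:=\widehat x^{k+1}$ satisfies $f(x^{k+1})\le f(\widehat x^{k+1})$ and lies in $\mathcal F$, so Step~3 is executable and an $x^{k+1}\in\mathcal F$ is produced, as claimed.

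I do not expect a genuine obstacle here: the proposition is a bookkeeping lemma certifying that every line of Algorithm~\ref{FW_nonconvex} can be carried out. The one point needing care — and the conceptual heart of the construction — is the bound $\alpha_k\le1$ in item~(iii): it is exactly the fact that the step never overshoots $1$, combined with the convexity of the \emph{surrogate} set $\mathcal F(x^k,\xi^k)$ and its containment in the possibly nonconvex set $\mathcal F$, that rules out the infeasibility phenomenon that plagues naive Frank--Wolfe on nonconvex constraints. One should also note that the $C^1$ regularity of $f$ on all of $\X$ assumed in \eqref{P0} is precisely what legitimizes the Taylor expansion in item~(ii) without any extra hypothesis.
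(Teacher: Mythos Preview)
Your proposal is correct and follows essentially the same route as the paper: item~(i) by Remark~\ref{welldefined}, item~(ii) via the first-order expansion of $f$ along $d^k$ combined with $\langle\nabla f(x^k),d^k\rangle<0$ (the paper writes this out using the mean value theorem for $\psi(\alpha)=f(x^k+\alpha d^k)$, while you use the equivalent $o(\alpha)$ form), and item~(iii) by the convex-combination argument inside $\mathcal F(x^k,\xi^k)\subseteq\mathcal F$ with $\alpha_k\in(0,1]$.
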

\begin{proof}
(i): The well-definedness {\color{blue} of $\LO(\nabla f(x^k), x^k, \xi^k)$} follows from Remark~\ref{welldefined}.

(ii): Define $\psi(\alpha) := f(x^k + \alpha(u^k - x^k))$. Since $f$ is continuously differentiable, for each $\alpha > 0$, by the mean value theorem, there exists $t_\alpha\in (0,\alpha)$ such that
\[
\begin{split}
&f(x^k + \alpha(u^k - x^k)) = \psi(\alpha) = \psi(0) + \alpha \psi'(t_\alpha) \\
&= \psi(0) + c\alpha \psi'(0) + \alpha [(1-c)\psi'(0) + \psi'(t_\alpha) - \psi'(0)]\\
& = f(x^k) + c \alpha\langle\nabla f(x^k), u^k - x^k\rangle \\
&\ \ \ \ +\alpha\left((1-c)\langle\nabla f(x^k), u^k - x^k\rangle + [\psi'(t_\alpha) - \psi'(0)]\right)
\end{split}
\]
Noting that $\langle\nabla f(x^k), u^k - x^k\rangle < 0$ (thanks to Remark~\ref{term:step1} and the fact that $x^k$ is not a stationary point of \eqref{P0}), $c\in (0,1)$ and that $\lim_{\alpha\downarrow 0}\psi'(t_\alpha) = \psi'(0)$ (thanks to the continuity of $\nabla f$), we conclude that \eqref{linesearch} is satisfied for all sufficiently small $\alpha > 0$.
	
(iii): Recall from \eqref{constrain} that $x^k\in {\cal F}(x^k,\xi^k)$. Moreover, since $u^k$ is an output of $\LO(\nabla f(x^k), x^k, \xi^k)$, we also have $u^k\in {\cal F}(x^k,\xi^k)$. These together with $\alpha_k\in(0,1]$ and the convexity of ${\cal F}(x^k,\xi^k)$ imply that
\[
\widehat x^{k+1} := x^k + \alpha_k(u^k - x^k) = \alpha_k u^k +(1-\alpha_k)x^k \in {\cal F}(x^k,\xi^k)\subseteq\mathcal{F}.
 \]
Then an $x^{k+1}\in {\cal F}$ can be generated at the end of the $(k+1)$th iteration of Algorithm~\ref{FW_nonconvex}, since we can at least choose $x^{k+1} := \widehat x^{k+1}$. 
\qed \end{proof}
\begin{remark}[Choice of $ x^{k+1} $]
From Proposition~\ref{Prop1}, we see that one can always choose $x^{k+1} = \widehat x^{k+1}$. Here, observing that the constraint functions are all positively homogeneous (as the difference of two norms) in the examples we discussed in Sections~\ref{gl} and \ref{mc}, we introduce a boundary boosting technique to choose $ x^{k+1} $ for those examples. The main idea is to leverage the positive homogeneity of $ P_1-P_2 $. Specifically, if $ c_1 := P_1(\widehat x^{k+1}) - P_2(\widehat x^{k+1})< \sigma $ and $ c_1>0 $, we define $ \widetilde{x}^{k+1} = \frac{\sigma}{c_1}\widehat x^{k+1} $. It follows that $ P_1(\widetilde x^{k+1}) - P_2(\widetilde x^{k+1}) = \frac{\sigma}{c_1}(P_1(\widehat x^{k+1}) - P_2(\widehat x^{k+1})) = \sigma $. Then we choose
\begin{equation}\label{choosexk+1}
	x^{k+1} = \begin{cases}
	\widetilde x^{k+1} & \text{if}\;  c_1>0 \;\text{and}\; f(\widetilde x^{k+1}) \leq f(\widehat x^{k+1}), \\
	\widehat{x}^{k+1} & \text{otherwise}.
	\end{cases}
\end{equation}
\end{remark}

We next show that the sequence $\{x^k\}$ generated by Algorithm~\ref{FW_nonconvex} clusters at a stationary point of \eqref{P0}. Notice from Remark~\ref{term:step1} and Proposition~\ref{Prop1} that $\{x^k\}$ is either an infinite sequence or is a finite sequence that ends at a stationary point of
\eqref{P0}. Without loss of generality, we assume that $\{x^k\}$ is an infinite sequence.
\begin{theorem}[Subsequential convergence]\label{thm-fw}
Consider \eqref{P0} and suppose that Assumption~\ref{assumption1} holds. Let $\{x^k\}$ be an infinite sequence generated by
Algorithm~\ref{FW_nonconvex} and let $G$ be defined as in \eqref{defG}. Then the sequence $\{x^k\}$ is bounded and $G(x^k) \rightarrow 0$ as $k \rightarrow \infty$. Moreover, any accumulation point of $ \{x^k\} $ is a stationary point of \eqref{P0}.
\end{theorem}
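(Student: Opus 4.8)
The plan is to derive everything from Lemma~\ref{Armijolemma} and the properties of the stationarity measure in Theorem~\ref{lemma:1}. Boundedness is immediate: by Proposition~\ref{Prop1}(iii) and Step~3, every iterate lies in the compact set $\mathcal{F}$, so $\{x^k\}$ is bounded. Moreover, since $\{x^k\}$ is assumed infinite, the termination test in Step~1 is never met, so $\langle\nabla f(x^k),d^k\rangle\neq 0$ for every $k$; combined with $x^k\in\mathcal{F}(x^k,\xi^k)$ (by \eqref{constrain}), which forces $\langle\nabla f(x^k),d^k\rangle\le 0$ as in Remark~\ref{term:step1}, we get $\langle\nabla f(x^k),d^k\rangle<0$ for all $k$.

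Next I would invoke Lemma~\ref{Armijolemma} with $\Gamma=\mathcal{F}$, the sequence $\{\alpha_k^0\}$ from Step~0 (note $\sup_k\alpha_k^0\le 1<\infty$ while $\inf_k\alpha_k^0>0$ by assumption), and $d^k=u^k-x^k$. The hypotheses check out quickly: $f$ is $C^1$ on $\X\supseteq\Gamma$; $\{x^k\}\subseteq\mathcal{F}\subseteq B(0,M)$ and $u^k\in\mathcal{F}(x^k,\xi^k)\subseteq\mathcal{F}\subseteq B(0,M)$, so $\{d^k\}$ is bounded (by $2M$); item~(i) is the strict descent just established; item~(ii) is precisely the backtracking rule \eqref{linesearch} in Step~2; and item~(iii) holds since $\widehat x^{k+1}=x^k+\alpha_kd^k$ and Step~3 selects $x^{k+1}\in\mathcal{F}$ with $f(x^{k+1})\le f(\widehat x^{k+1})$. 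Lemma~\ref{Armijolemma} then gives $\langle\nabla f(x^k),d^k\rangle\to 0$.

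To finish I would translate this into the statement about $G$ and then apply Theorem~\ref{lemma:1}. Since $u^k$ solves $\min_{x\in\mathcal{F}(x^k,\xi^k)}\langle\nabla f(x^k),x\rangle$, we have $\max_{y\in\mathcal{F}(x^k,\xi^k)}\langle\nabla f(x^k),x^k-y\rangle=\langle\nabla f(x^k),x^k-u^k\rangle=-\langle\nabla f(x^k),d^k\rangle$, and taking the infimum over $\xi\in\partial P_2(x^k)$ on the left yields $0\le G(x^k)\le-\langle\nabla f(x^k),d^k\rangle$, the lower bound coming from Theorem~\ref{lemma:1}(i). Hence $G(x^k)\to 0$. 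Then for any accumulation point $x^*$ of $\{x^k\}$, applying Theorem~\ref{lemma:1}(ii) along the relevant subsequence gives $x^*\in\mathcal{F}$ and that $x^*$ is a stationary point of \eqref{P0}.

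There is no deep obstacle here; the proof is mostly an assembly of already-established pieces. The only points requiring care are the bookkeeping ones: reducing to the infinite-sequence case at the outset so that $\langle\nabla f(x^k),d^k\rangle<0$ genuinely holds for \emph{every} $k$, and checking that all points arising in the line search stay in $\mathcal{F}$ so that $\Gamma=\mathcal{F}$ is a legitimate choice in Lemma~\ref{Armijolemma} — this follows from the convexity of $\mathcal{F}(x^k,\xi^k)$, the inclusions $x^k,u^k\in\mathcal{F}(x^k,\xi^k)\subseteq\mathcal{F}$, and $\alpha_k^0\eta^j\in[0,1]$.
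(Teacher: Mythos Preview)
Your proof is correct and follows essentially the same approach as the paper: establish boundedness from $\{x^k\}\subseteq\mathcal{F}$, invoke Lemma~\ref{Armijolemma} with $\Gamma=\mathcal{F}$ to get $\langle\nabla f(x^k),d^k\rangle\to 0$, sandwich $G(x^k)$ between $0$ and $-\langle\nabla f(x^k),d^k\rangle$, and conclude via Theorem~\ref{lemma:1}(ii). Your write-up is in fact more detailed than the paper's in verifying the hypotheses of Lemma~\ref{Armijolemma}; the only superfluous remark is your final concern about intermediate line-search points staying in $\mathcal{F}$, which is unnecessary since $f$ is $C^1$ on all of $\X$ and Lemma~\ref{Armijolemma} only requires $x^{k+1}\in\Gamma$.
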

\begin{proof}
The boundedness of $\{x^k\}$ follows from the boundedness of $\mathcal{F}$ and the fact that $\{x^k\}\subseteq\mathcal{F}$. Next, we recall from the definition that $u^{k}$ is an output of $\LO(\nabla f(x^{k}), x^{k}, \xi^{k})$. This shows that
\[
0 \le G(x^{k})\le \max_{y \in \mathcal{F}(x^{k}, \xi^{k})} \langle \nabla f(x^{k}), x^{k}-y\rangle = \langle \nabla f(x^{k}), x^{k}-u^{k}\rangle,
\]
where the first inequality follows from Theorem~\ref{lemma:1}(i) and the fact that $x^{k} \in \mathcal{F}$. In addition, in view of Remark~\ref{term:step1}, we deduce from Lemma \ref{Armijolemma} (with $\Gamma:= \mathcal{F}$) that
\[
\lim_{k\to\infty}\langle \nabla f(x^{k}), u^{k}-x^{k}\rangle =  0.
\]
Thus, we have $G(x^{k}) \rightarrow 0$. Now it follows directly from Theorem~\ref{lemma:1}(ii) that any accumulation point of $ \{x^k\} $ is a stationary point of \eqref{P0}. 
\qed \end{proof}

\subsection{The case when $P_1$ is strongly convex}
We now consider the case where $P_1$ in \eqref{P0} is a strongly convex function. We consider the following additional assumption.
\begin{assumption}\label{ass:nonzerograd}
  In \eqref{P0}, it holds that $\nabla f(x)\neq 0$ for all $x\in {\cal F}$ and $\nabla f$ is Lipschitz continuous on ${\cal F}$, i.e., there exists $L > 0$ such that
  \[
   \|\nabla f(x) - \nabla f(y)\|\le L\|x - y\|\ \ {\rm whenever}\ x,y\in {\cal F}.
  \]
\end{assumption}
The Lipschitz continuity requirement on $\nabla f$ in Assumption~\ref{ass:nonzerograd} is standard when it comes to complexity analysis of first-order methods; see, for example, \cite{Ne04}.
Moreover, the condition of nonvanishing gradient in Assumption~\ref{ass:nonzerograd} eliminates the ``uninteresting" situation where a feasible point of problem~\eqref{P0} is a stationary point of the {\em unconstrained} problem $\min_{x \in \X} f(x)$.

\begin{proposition}\label{Prop5.2}
Consider \eqref{P0} with $P_1$ being strongly convex. Suppose that Assumptions~\ref{assumption1} and \ref{ass:nonzerograd} hold. Let $\{\alpha_k\}$ and $\{d^k\}$ be two infinite sequences generated by
Algorithm~\ref{FW_nonconvex}. Then
\[
\sum_{k=0}^{\infty}  \|d^k\|^2 <+\infty\ \ {  and}\ \ \inf_k\alpha_k > 0.
\]
\end{proposition}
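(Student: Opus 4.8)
The plan is to exploit the strong convexity of $P_1$ together with the line-search rule to obtain a quantitative per-iteration decrease of $f$ controlled by $\|d^k\|^2$, then sum this bound, and finally use Assumption~\ref{ass:nonzerograd} to rule out $\alpha_k\to 0$ along a subsequence. First I would note that $u^k\in\mathcal{F}(x^k,\xi^k)$, and since $P_1$ is strongly convex with modulus $\rho$, the defining inequality $P_1(u^k)-P_2(x^k)-\langle\xi^k,u^k-x^k\rangle\le\sigma$ applied against the midpoint $x^k$ (which also lies in $\mathcal{F}(x^k,\xi^k)$) yields, via strong convexity of $P_1$, a bound of the form $P_1\!\left(\tfrac{x^k+u^k}{2}\right)-P_2(x^k)-\langle\xi^k,\tfrac{x^k+u^k}{2}-x^k\rangle\le\sigma-\tfrac{\rho}{8}\|u^k-x^k\|^2$. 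This means the midpoint (and more generally points of the form $x^k+\beta d^k$ for suitable $\beta$) lies strictly inside $\mathcal{F}(x^k,\xi^k)$ with a margin proportional to $\|d^k\|^2$; intuitively, the constraint curvature forces the feasible set $\mathcal{F}(x^k,\xi^k)$ to be ``fat'' in the $d^k$ direction, so a step of fixed size is admissible.

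Next I would use this margin, together with Lipschitz continuity of $\nabla f$ on $\mathcal{F}$ (Assumption~\ref{ass:nonzerograd}) and the descent inequality $f(x^k+\alpha d^k)\le f(x^k)+\alpha\langle\nabla f(x^k),d^k\rangle+\tfrac{L}{2}\alpha^2\|d^k\|^2$, to show that the Armijo condition \eqref{linesearch} holds for some $\alpha$ bounded below independently of $k$ — here one must be a little careful because $\langle\nabla f(x^k),d^k\rangle$ can be small; the trick is to lower-bound $-\langle\nabla f(x^k),d^k\rangle$ in terms of $\|d^k\|^2$ using optimality of $u^k$ for the $\LO$ over the fat set $\mathcal{F}(x^k,\xi^k)$. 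Concretely, since $x^k+\beta d^k\in\mathcal{F}(x^k,\xi^k)$ for all $\beta$ up to roughly $1+c_0\|d^k\|$ (by the curvature margin above), optimality gives $\langle\nabla f(x^k),u^k-x^k\rangle\le\langle\nabla f(x^k),(x^k+\beta d^k)-x^k\rangle=\beta\langle\nabla f(x^k),d^k\rangle$ for $\beta>1$, forcing $\langle\nabla f(x^k),d^k\rangle\le 0$ and, after rearranging, $-\langle\nabla f(x^k),d^k\rangle\ge c_1\|d^k\|^2$ for a uniform constant $c_1$. Plugging this into the Armijo test shows backtracking stops once $\alpha\lesssim (1-c)c_1/L$, hence $\inf_k\alpha_k>0$. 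Combining the Armijo decrease with $-\langle\nabla f(x^k),d^k\rangle\ge c_1\|d^k\|^2$ and $\alpha_k\ge\underline{\alpha}>0$ yields $f(x^k)-f(x^{k+1})\ge c\underline{\alpha}c_1\|d^k\|^2$, and since $f$ is bounded below on the compact set $\mathcal{F}$, telescoping gives $\sum_k\|d^k\|^2<\infty$.

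The main obstacle I anticipate is establishing the uniform lower bound $-\langle\nabla f(x^k),d^k\rangle\ge c_1\|d^k\|^2$ cleanly — this is where strong convexity of $P_1$ and nonvanishing gradients must be combined correctly. One has to verify that the ``overshoot'' step length $\beta>1$ for which $x^k+\beta d^k$ remains in $\mathcal{F}(x^k,\xi^k)$ is genuinely bounded away from $1$ by an amount $\sim\|d^k\|$ (not smaller), which again comes from the $\tfrac{\rho}{8}\|d^k\|^2$ margin at the midpoint combined with convexity of $g_k(x):=P_1(x)-P_2(x^k)-\langle\xi^k,x-x^k\rangle-\sigma$; a one-dimensional convexity argument along the ray $x^k+t d^k$ shows $g_k$ attains a value $\le-\tfrac{\rho}{8}\|d^k\|^2$ at $t=\tfrac12$ while $g_k(x^k)\le 0$, so by convexity $g_k$ stays nonpositive up to $t=1+c_0\|d^k\|$ for an explicit $c_0$ depending on $\rho$ and $\sup_{\mathcal F}\|\nabla f\|$ (which is finite and the relevant denominator is bounded since $g_k(x^k+d^k)=g_k(u^k)\le 0$ as well). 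Once the geometry is pinned down, the rest is the standard descent-lemma bookkeeping, and $\inf_k\alpha_k>0$ and $\sum_k\|d^k\|^2<\infty$ follow together.
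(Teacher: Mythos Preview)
Your overall strategy is on target---you correctly identify that the whole proof hinges on a uniform bound of the form $-\langle\nabla f(x^k),d^k\rangle\ge c_1\|d^k\|^2$, after which the descent lemma and Armijo bookkeeping finish both conclusions. The problem is that your mechanism for obtaining this bound, the ``overshoot'' argument, cannot work. Since $\nabla f(x^k)\neq 0$ (Assumption~\ref{ass:nonzerograd}), the minimizer $u^k$ of a nonzero linear functional over the compact convex set $\mathcal{F}(x^k,\xi^k)$ lies on its boundary, so $g_k(u^k)=0$. Your own midpoint estimate gives $g_k(x^k+\tfrac12 d^k)\le-\tfrac{\rho}{8}\|d^k\|^2<0$; convexity of $t\mapsto g_k(x^k+td^k)$ then forces the one-sided slope at $t=1$ to be at least $\tfrac{\rho}{4}\|d^k\|^2>0$, hence $g_k(x^k+td^k)>0$ for every $t>1$. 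No point $x^k+\beta d^k$ with $\beta>1$ is feasible, and indeed if one were, comparing it to $u^k$ in the $\LO$ would yield $(\beta-1)\langle\nabla f(x^k),d^k\rangle\ge 0$, i.e.\ $\langle\nabla f(x^k),d^k\rangle\ge 0$, contradicting the descent property. So the overshoot route proves too much and collapses.

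The paper obtains the key inequality through the KKT system of the $\LO$ subproblem: Assumption~\ref{assumption1} provides a Slater point, so there exist $\lambda_k\ge 0$ and $w^k\in\partial P_1(u^k)$ with $\nabla f(x^k)+\lambda_k(w^k-\xi^k)=0$ and complementary slackness $\lambda_k g_k(u^k)=0$. The nonvanishing-gradient assumption is used here (not in a geometric overshoot) to show $\underline{\lambda}:=\inf_k\lambda_k>0$: if $\lambda_{k_j}\to 0$ along a subsequence, boundedness of $\{w^k\},\{\xi^k\}$ forces $\nabla f(x^*)=0$ at any cluster point, a contradiction. Strong convexity of $P_1$ then gives $\langle w^k,x^k-u^k\rangle\le P_1(x^k)-P_1(u^k)-\tfrac{\rho}{2}\|d^k\|^2$; combining this with the stationarity equation, complementary slackness, and $x^k\in\mathcal{F}$ yields $\langle\nabla f(x^k),d^k\rangle\le-\tfrac{\rho\underline{\lambda}}{2}\|d^k\|^2$. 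From there your final paragraph (descent lemma, Armijo threshold, telescoping) matches the paper.
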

\begin{proof}
Since $u^{k}$ is an output of $\LO(\nabla f(x^{k}), x^{k}, \xi^{k})$ and Assumption~\ref{assumption1} holds, by \cite[Corollary~28.2.1, Theorem~28.3]{Ro70}, there exist $\lambda_k \ge 0$ and $w^k \in \partial P_1(u^k)$ such that
\begin{equation}\label{eq:KKT0}
\begin{array}{rl}
  &\nabla f(x^k)+ \lambda_k (w^k- \xi^k)=0,\\ [2 pt]
  &\lambda_k (P_1(u^k)- \langle \xi^k, u^k-x^k \rangle -P_2(x^k)-\sigma)=0.
\end{array}
\end{equation}

We claim that $\underline{\lambda}:=\inf\{\lambda_k: k \in \mathbb{N}\}>0$. Suppose not, by passing to subsequences if necessary, we may assume that $\lambda_k \rightarrow 0$ and $x^k \rightarrow x^*$ for some $x^*$. Note that $x^*\in {\cal F}$. Passing to the limit in the first relation of \eqref{eq:KKT0} and noting that $\{w^k\}$ and $\{\xi^k\}$ are bounded (thanks to the boundedness of $\{x^k\}$, $\{u^k\}$, the continuity of $P_1$ and $P_2$, and \cite[Theorem~2.6]{Tuy98}), we have
\[
\nabla f(x^*)=0.
\]
This contradicts Assumption~\ref{ass:nonzerograd}. Thus, $\underline{\lambda}=\inf\{\lambda_k: k \in \mathbb{N}\}>0$.

Next, let $\rho > 0$ denote the modulus of strong convexity of $P_1$. Using \eqref{eq:KKT0}, we have
\begin{eqnarray}\label{eq:ineq}
&&\langle \nabla f(x^k), u^k-x^k\rangle  =   \langle -\lambda_k (w^k- \xi^k), u^k-x^k\rangle \notag \\
&& =  \lambda_k  \langle w^k, x^k-u^k\rangle + \lambda_k  \langle \xi^k, u^k-x^k\rangle \notag \\
&& \le  \lambda_k (P_1(x^k)-P_1(u^k) -\frac{\rho}{2}\|x^k-u^k\|^2) + \lambda_k (P_1(u^k)-P_2(x^k)-\sigma) \notag \\
&& =  \lambda_k(P_1(x^k)-P_2(x^k)-\sigma) - \frac{\rho \lambda_k}{2}\|x^k-u^k\|^2 \le  - \frac{\rho \underline{\lambda}}{2}\|x^k-u^k\|^2.
\end{eqnarray}
where the first inequality holds in view of the second relation of \eqref{eq:KKT0} and the definition of $\rho$, and the last inequality holds because $x^k \in \mathcal{F}$.
This together with the definition of $x^{k+1}$ and \eqref{linesearch} shows that
\[
f(x^{k+1}) \le f(\widehat{x}^{k+1}) \le f(x^k) + c\alpha_k\langle\nabla f(x^k), u^k - x^k\rangle  \le f(x^k) - c\alpha_k\frac{\rho \underline{\lambda}}{2}\|x^k-u^k\|^2.
\]
Note that $\{x^k\} \subseteq \mathcal{F}$ (so that $\inf_k f(x^k) > -\infty$). Using this and summing the above display from $k = 0$ to $\infty$, we see further that
\begin{equation}\label{hahahaha2}
\sum_{k=0}^{\infty} \alpha_k  \|d^k\|^2=\sum_{k=0}^{\infty} \alpha_k  \|x^k-u^k\|^2 <+\infty.
\end{equation}

We now show that $\inf_k\alpha_k>0$. To see this, first recall from \eqref{constrain} that $x^k\in {\cal F}(x^k,\xi^k)$, and note from the definition that $u^k\in {\cal F}(x^k,\xi^k)$. Moreover, since ${\cal F}(x^k,\xi^k)$ is convex, we have $x^k+ \alpha (u^k-x^k) \in {\cal F}(x^k,\xi^k) \subseteq {\cal F}$ for all $\alpha\in [0,1]$. Then, for any $\alpha\in [0,1]$, with $L$ as in Assumption~\ref{ass:nonzerograd}, we have
\begin{align*}
&f(x^k+ \alpha (u^k-x^k))  \le  f(x^k) + \alpha \langle \nabla f(x^k), u^k-x^k\rangle+ \frac{L \alpha^2}{2}\|u^k-x^k\|^2 \\
& =  f(x^k) + c \alpha \langle \nabla f(x^k), u^k-x^k\rangle \\
& \ \ \ \ +\bigg[(1-c)\alpha \langle \nabla f(x^k), u^k-x^k\rangle + \frac{L \alpha^2}{2}\|u^k-x^k\|^2\bigg] \\
& \le  f(x^k) + c \alpha \langle \nabla f(x^k), u^k-x^k\rangle + \alpha  \|u^k-x^k\|^2 \,  \bigg[- \, \frac{(1-c) \rho \underline{\lambda}}{2}  + \frac{L \alpha}{2}\bigg],
\end{align*}
where the last inequality follows from \eqref{eq:ineq}.
This shows that the line search criterion \eqref{linesearch} will be satisfied as long as
$\alpha \le \frac{(1-c)\rho \underline{\lambda}}{L}$.
Consequently, we have
$\alpha_k \ge \min\{ \alpha_k^0, \frac{(1-c)\rho \underline{\lambda} \eta}{L} \}$,
and so $\inf_k\alpha_k \ge \min\{ \inf_{k}\alpha_k^0, \frac{(1-c)\rho \underline{\lambda} \eta}{L}\}>0$.
The desired conclusion now follows immediately from this and \eqref{hahahaha2}.
\qed \end{proof}

Next, we derive a $o(1/k)$ complexity in terms of the stationarity measure $G$ defined in \eqref{defG} under the strong convexity of $P_1$.
\begin{theorem}
Consider \eqref{P0} with $P_1$ being strongly convex. Suppose that Assumptions~\ref{assumption1} and \ref{ass:nonzerograd} hold. Let $\{x^k\}$ be an infinite sequence generated by
Algorithm~\ref{FW_nonconvex} and let $G$ be defined as in \eqref{defG}. Then
\[
\sum_{k=0}^{\infty}  G(x^k) <+\infty \ \mbox{ and }\ k \big[\min_{0 \le t \le k} G(x^t) \big] \rightarrow 0.
\]
\end{theorem}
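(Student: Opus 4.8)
The plan is to combine the two facts proved in Proposition~\ref{Prop5.2} — namely $\sum_k\|d^k\|^2<+\infty$ and $\underline{\alpha}:=\inf_k\alpha_k>0$ — with the sandwich bound $0\le G(x^k)\le \langle\nabla f(x^k),x^k-u^k\rangle$ that already appears in the proof of Theorem~\ref{thm-fw}, and then to finish the $o(1/k)$ part with a standard tail estimate for nonnegative summable sequences.

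First I would record that, by Step 2 of Algorithm~\ref{FW_nonconvex} (the Armijo condition \eqref{linesearch}) and the choice of $x^{k+1}\in\mathcal{F}$ with $f(x^{k+1})\le f(\widehat x^{k+1})$, we have
\[
f(x^{k+1})\le f(x^k)+c\alpha_k\langle\nabla f(x^k),d^k\rangle,\qquad\text{i.e.}\qquad c\alpha_k\langle\nabla f(x^k),x^k-u^k\rangle\le f(x^k)-f(x^{k+1}).
\]
Since $\langle\nabla f(x^k),x^k-u^k\rangle\ge 0$ (as $x^k\in\mathcal{F}(x^k,\xi^k)$ and $u^k$ minimizes $\langle\nabla f(x^k),\cdot\rangle$ over $\mathcal{F}(x^k,\xi^k)$) and $\alpha_k\ge\underline{\alpha}>0$, this gives $c\,\underline{\alpha}\,\langle\nabla f(x^k),x^k-u^k\rangle\le f(x^k)-f(x^{k+1})$. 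Invoking $0\le G(x^k)\le\langle\nabla f(x^k),x^k-u^k\rangle$ (valid because $\xi^k\in\partial P_2(x^k)$ and by Theorem~\ref{lemma:1}(i)) and summing from $k=0$ to $N$, the right-hand side telescopes and is bounded by $f(x^0)-\inf_{x\in\mathcal{F}}f(x)$, which is finite by compactness of $\mathcal{F}$ and continuity of $f$. Letting $N\to\infty$ yields $\sum_{k=0}^{\infty}G(x^k)<+\infty$.

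For the rate, set $b_k:=\min_{0\le t\le k}G(x^t)$, which is nonnegative and nonincreasing in $k$. For each $k$, the indices $t$ with $\lceil k/2\rceil\le t\le k$ number at least $k/2$, and each satisfies $G(x^t)\ge b_k$; hence
\[
\tfrac{k}{2}\,b_k\ \le\ \sum_{t=\lceil k/2\rceil}^{k}G(x^t)\ \le\ \sum_{t\ge\lceil k/2\rceil}G(x^t).
\]
The last expression is the tail of the convergent series $\sum_k G(x^k)$ and therefore tends to $0$ as $k\to\infty$; consequently $k\,b_k=k\,[\min_{0\le t\le k}G(x^t)]\to 0$, which is the claim.

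I do not expect a genuine obstacle here: both inputs ($\underline{\alpha}>0$ and the upper bound on $G(x^k)$) are already in hand from Proposition~\ref{Prop5.2} and the proof of Theorem~\ref{thm-fw}, and the remainder is the usual ``telescoping plus tail'' bookkeeping. The only points needing a little care are making sure $f$ is bounded below on $\mathcal{F}$ (compactness) when summing, and handling the floor/ceiling counting so that at least $k/2$ terms are retained in the tail sum.
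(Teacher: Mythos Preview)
Your proof is correct and follows essentially the same approach as the paper: both use the Armijo inequality together with $\inf_k\alpha_k>0$ (from Proposition~\ref{Prop5.2}) and the bound $0\le G(x^k)\le\langle\nabla f(x^k),x^k-u^k\rangle$ to obtain summability, and then a standard tail-of-convergent-series argument for the $o(1/k)$ rate. The only cosmetic differences are that the paper sums first and applies $\inf_k\alpha_k>0$ afterwards, and uses the window $[k,2k]$ instead of your $[\lceil k/2\rceil,k]$ for the tail estimate; also, the fact $\sum_k\|d^k\|^2<\infty$ from Proposition~\ref{Prop5.2} is not actually needed here---only $\inf_k\alpha_k>0$ is used.
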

\begin{proof}
Recalling that $u^k$ is an output of $\LO(\nabla f(x^{k}), x^{k}, \xi^{k})$, where $\xi^k \in \partial P_2(x^k)$, we have
\[
G(x^k)\le \max_{y \in \mathcal{F}(x^k, \xi^k)} \langle \nabla f(x^k), x^k-y\rangle = \langle \nabla f(x^k), x^k-u^k\rangle
\]
This together with the definition of $x^{k+1}$ implies that
\[
f(x^{k+1})-f(x^k) \le f(\widehat{x}^{k+1})-f(x^k) \le c \alpha^k \langle \nabla f(x^{k}), u^k-x^k\rangle \le - c \alpha^k G(x^k).
\]
Summing the above display from $k = 0$ to $\infty$ and recalling $\inf_k f(x^k) > -\infty$ (since $\{x^k\}\subseteq {\cal F}$), we see that $\sum_{k=0}^{\infty} \alpha_k G(x^k)<\infty$.
Since $\inf_k\alpha_k > 0$ from Proposition~\ref{Prop5.2} and $G(x^k)\ge 0$ (see Theorem~\ref{lemma:1}(i)), we deduce further that
$\sum_{k=0}^{\infty} G(x^k)<\infty$.

Finally, let $a_k=\min_{0 \le t \le k} G(x^t)$. Then $\{a_k\}$ is nonnnegative (see Theorem~\ref{lemma:1}(i)) and non-increasing. On the other hand, the Cauchy criterion for the convergent series gives $\lim_{k\to\infty}\sum_{t=k}^{2k} G(x^t) = 0$. This implies that
\[
(k+1) \, a_{2k} \le G(x^k)+\ldots + G(x^{2k}) = \sum_{t=k}^{2k} G(x^t) \rightarrow 0
\]
Similarly, one can show that $\lim_{k\to\infty}(k+1)a_{2k+1}= 0$.
Thus, we have $\lim_{k\to\infty}k a_{k} = 0$ as desired.
\qed \end{proof}

\section{Away-step Frank-Wolfe{\color{blue}-type} algorithm}\label{sec6}
When $f$ and $C$ in \eqref{P00} are convex and $C = {\rm conv}(\mathcal A) $ for some finite set $ \mathcal A $ (whose elements are called atoms), the so-called away-step technique was proposed in \cite{Guelat86} to accelerate the convergence of FW method; see \cite{Lacoste15,Garber16,Clarkson10} for recent developments. The key ingredient in the away-step technique is to keep track of \emph{a convex decomposition of the current iterate into atoms in $ \mathcal A $}. The away step then selects one atom from the current decomposition that ``differs" most from the gradient direction. The FW method with away {\color{blue}steps} requires to maintain the set of the ``active" atoms used in the aforementioned decomposition and update this active set in each iteration (see \cite[Algorithm~1]{Lacoste15} for more details).

In our nonconvex settings, the feasible set of $\LO$ is not necessarily a convex hull of finitely many atoms, and it can vary from iteration to iteration. In particular, the active atoms used for the current iteration may be infeasible for the subproblem of the next iteration. It seems difficult to maintain the decomposition with a uniform atomic set $ \mathcal A $ and possibly meaningless to update the decomposition based on previous information. In view of this, we do not store atoms used in the decompositions of previous iterates, but generate a new set of atoms in each iteration based on the current iterate. Below we describe the details of how to decompose the iterate and construct away-step oracles for our {\color{blue}FW-type} method for \eqref{P0}.

We first introduce our away-step oracle, which mimics the away {\color{blue}steps} used in the classical FW method {\color{blue} under the convex setting}.

\begin{definition}[Away-step oracles]\label{awdef}
Let $P_1$, $P_2$ and $\sigma$ be defined in \eqref{P0}, $ y\in\mathcal F $ and $ \xi\in\partial P_2(y) $.
Given $ a\in\X\setminus\{0\} $, choose a set
\begin{equation}\label{Sy}
	\mathcal S(y, \xi)=\left\{v_1,\ldots, v_{m_y} \right\}\subseteq \bd {\cal F}(y,\xi)
\end{equation}
so that $ y = \sum_{i=1}^{m_y} c_i v_i $ for some $ c_1,\ldots,c_{m_y}>0$ satisfying $ \sum_{i=1}^{m_y} c_i = 1$, where $1\le m_y\leq n+1 $ with $ n $ being the dimension of $ \X $.
We define the away-step oracle $ \AWO(a, \mathcal S(y,\xi)) $ as
\begin{equation}\label{aw-step}
\begin{array}{cl}
\max\limits_{x\in \X} & \langle a, x \rangle \ \ \ \
{\rm s.t.} \ \ x \in {\rm conv}(\mathcal{S}(y, \xi)).
\end{array}
\end{equation}	
\end{definition}

The choice of the set $ \mathcal S(y, \xi) $ is essential for $ \AWO(a, \mathcal S(y,\xi)) $. Specific strategies of choosing $ \mathcal S(y,\xi) $ and how the $ \AWO $ can be carried out efficiently for some concrete examples will be discussed in Section~\ref{sec:aw3exs}. Here, we first comment on the existence of such $ \mathcal S(y,\xi) $.

\begin{remark}[Existence of $S(y,\xi)$]\label{remark1}
Since $y\in {\cal F}$ and $ \xi\in\partial P_2(y) $ in Definition~\ref{awdef}, we see from \eqref{constrain} that $y \in {\cal F}(y,\xi)$. Since ${\cal F}(y,\xi)$ is compact and convex, it is the convex hull of all its extreme points; in particular, ${\cal F}(y,\xi) = {\rm conv}\, (\bd {\cal F}(y,\xi))$.
The existence of $v_i$ and $c_i$ in Definition~\ref{awdef} now follows from this and the Carath\'{e}odory's theorem.
\end{remark}

Next, we note that $\AWO$ is well-defined, because it is solving a maximization problem with a linear objective and a nonempty compact feasible set. We register this simple observation as our next proposition.
\begin{proposition}[Well-definedness of $\AWO$]
Consider \eqref{P0}. For any fixed $ y\in \mathcal{F} $, $ \xi\in \partial P_2(y) $ and any set $ \mathcal S(y,\xi) $ given as in \eqref{Sy}, the away-step oracle $ \AWO(\nabla f(y), \mathcal S(y,\xi)) $ in \eqref{aw-step} is well-defined.
\end{proposition}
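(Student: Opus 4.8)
The plan is to invoke the Weierstrass extreme value theorem after checking that \eqref{aw-step} amounts to maximizing a continuous function over a nonempty compact set. First I would recall from the specification \eqref{Sy} that $\mathcal{S}(y,\xi)=\{v_1,\ldots,v_{m_y}\}$ is a \emph{finite} set with $1\le m_y\le n+1$; in particular it is nonempty, and each $v_i\in\bd{\cal F}(y,\xi)\subseteq\X$. Consequently ${\rm conv}(\mathcal{S}(y,\xi))$ is the convex hull of finitely many points of the finite-dimensional space $\X$, hence a nonempty polytope.

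Next I would observe that ${\rm conv}(\mathcal{S}(y,\xi))$ is compact: it is the image of the (compact) standard unit simplex $\{\lambda\in\R^{m_y}_+:\ \sum_{i=1}^{m_y}\lambda_i=1\}$ under the continuous map $\lambda\mapsto\sum_{i=1}^{m_y}\lambda_i v_i$, and continuous images of compact sets are compact. Since the objective $x\mapsto\langle\nabla f(y),x\rangle$ is linear and therefore continuous on $\X$, the maximization problem \eqref{aw-step} maximizes a continuous function over a nonempty compact set, so by the Weierstrass extreme value theorem its set of optimal solutions is nonempty. That is precisely the assertion that $\AWO(\nabla f(y),\mathcal{S}(y,\xi))$ is well-defined.

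There is essentially no obstacle in this argument; the only item worth writing out explicitly is the compactness of ${\rm conv}(\mathcal{S}(y,\xi))$, which is routine as above. Note also that the existence of a valid choice of $\mathcal{S}(y,\xi)$ has already been established in Remark~\ref{remark1} via Carath\'{e}odory's theorem, so nothing further is needed on that point, and this proposition is then immediate.
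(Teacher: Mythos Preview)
Your proposal is correct and matches the paper's approach exactly: the paper simply notes (in the sentence immediately preceding the proposition) that $\AWO$ is a linear maximization over a nonempty compact feasible set, and you have just spelled out the compactness of ${\rm conv}(\mathcal{S}(y,\xi))$ explicitly. Nothing more is needed.
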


We now present our Frank-Wolfe{\color{blue}-type} algorithm with away {\color{blue}steps} as Algorithm~\ref{alg:aw-fw} to enhance Algorithm~\ref{FW_nonconvex} for solving \eqref{P0} under Assumption~\ref{assumption1}.
\begin{algorithm}
	\caption{Away-step Frank-Wolfe{\color{blue}-type} algorithm for \eqref{P0} under Assumption~\ref{assumption1}}\label{alg:aw-fw}
	\begin{algorithmic}
		\State 
		\begin{description}
			\item[\bf Step 0.] Choose $x^0\in \mathcal{F}$, $ 0<\epsilon< \zeta < \infty$, $c,\,\eta\in (0, 1)$ and a sequence $ \{\alpha_k^0\}\subseteq (0,1] $ with $\inf_k \alpha_k^0 > 0$. Set $k = 0$.\vspace{0.1 cm}

			\item[\bf Step 1.] Pick $\xi^k\in\partial P_2(x^k)$. Compute $ u^k_{\rm fw}$ by calling $\LO(\nabla f(x^k),x^k, \xi^k)$ (see Definition~\ref{LOdef}) and set $ d_{\rm fw}^k := u^k_{\rm fw} - x^k $.

\noindent If $ \langle \nabla f(x^k), u_{\rm fw}^k-x^k\rangle = 0 $, terminate. \vspace{0.1 cm}

			\item[\bf Step 2.] Choose $ \mathcal S(x^k, \xi^k) $ as in \eqref{Sy} and call $ \AWO(\nabla f(x^k),\mathcal S(x^k, \xi^k)) $  (see Definition~\ref{awdef}) to compute $ u^k_{\rm aw} $.
Set $ d^k_{\rm aw} := x^k - u^k_{\rm aw} $ and choose an $ \alpha^k_{\rm aw}\le \max\left\{\alpha\geq 0:\; x^k+\alpha d^k_{\rm aw} \in {\rm conv}(\mathcal S(x^k,\xi^k)) \right\} $.\vspace{0.1 cm}

			\item[\bf Step 3.] If $ \langle \nabla f(x^k), d^k_{\rm fw}\rangle > \langle \nabla f(x^k), d^k_{\rm aw}\rangle $ and $\alpha_{\rm aw}^k\in(\epsilon, \zeta] $, set $ d^k =  d^k_{\rm aw} $ and $ \widetilde\alpha_k^0  = \alpha_{\rm aw}^k $; we declare that an AW step is taken.\vspace{0.1 cm}

\noindent Otherwise,  set $ d^k = d^k_{\rm fw}$ and $\widetilde \alpha^0_k = \alpha^0_k$; we declare that an FW step is taken.\vspace{0.1 cm}
 			
			\item[\bf Step 4.] Find $ \alpha_k = \widetilde \alpha_k^0\eta^{j_k} $ with $ j_k $ being the smallest nonnegative integer such that
			\begin{equation}\label{ls_awfw}
			  f(x^k + \alpha_k d^k) \le f(x^k) + c\alpha_k\langle\nabla f(x^k), d^k \rangle.
			\end{equation}			
			\item[\bf Step 5.] Set $\widehat x^{k+1} = x^k + \alpha_k d^k$. Choose $ x^{k+1}\in\mathcal F $ such that $ f(x^{k+1})\leq f(\widehat x^{k+1}) $. Update $k \leftarrow k+1$ and go to Step 1.
		\end{description}
	\end{algorithmic}
\end{algorithm}

\begin{remark}[{Well-definedness of Algorithm~\ref{alg:aw-fw}}]\label{welldefined:Alg2}
Similar to Proposition~\ref{Prop1}, we can argue the well-definedness of Algorithm~\ref{alg:aw-fw} as follows: Suppose that a nonstationary $x^k\in {\cal F}$ is given for some $k \ge 0$. Note that $\langle \nabla f(x^k), u_{\rm fw}^k-x^k\rangle\le 0$ because $x^k \in {\cal F}(x^k,\xi^k)$. Since $x^k$ is not stationary, we further have $\langle \nabla f(x^k), u_{\rm fw}^k-x^k\rangle< 0$ in view of Lemma~\ref{equistat}. Then the rule of choosing $d^k$ in Step~3 of Algorithm~\ref{alg:aw-fw} yields $ \langle \nabla f(x^k), d^k\rangle < 0 $. Therefore, one can show similarly as in Proposition~\ref{Prop1} that the line-search subroutine in Step~4 of Algorithm~\ref{alg:aw-fw} can terminate in finitely many inner iterations. Furthermore, from the definition of $\alpha_k$, one can deduce that $ \widehat{x}^{k+1} \in\mathcal F(x^k,\xi^k)\subseteq {\cal F} $. Then an $ x^{k+1} \in {\cal F}$ can be generated at the end of the ($k+1$)th iteration of Algorithm~\ref{alg:aw-fw}, since we can at least choose $ x^{k+1}:=\widehat{x}^{k+1} $.
\end{remark}

We next show that the sequence $\{x^k\}$ generated by Algorithm~\ref{alg:aw-fw} clusters
at a stationary point of \eqref{P0}. From the discussion in Remarks~\ref{term:step1} and \ref{welldefined:Alg2}, we see that $\{x^k\}$ is either an infinite sequence or is a finite sequence that terminates at a stationary point of \eqref{P0}. Without loss of generality, we assume that $\{x^k\}$ is an infinite sequence.

\begin{theorem}[Subsequential convergence]\label{thm-fw-aw}
Consider \eqref{P0} and suppose that Assumption~\ref{assumption1} holds. Let $ \{x^k\} $ be an infinite sequence generated by
Algorithm~\ref{alg:aw-fw}. Then $ \{x^k\} $ is bounded and every accumulation point of $ \{x^k\} $ is a stationary point of \eqref{P0}.
\end{theorem}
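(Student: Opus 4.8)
The plan is to mirror the proof of Theorem~\ref{thm-fw}, the only genuine novelty being that the direction $d^k$ actually used in Step~4 of Algorithm~\ref{alg:aw-fw} need not be the Frank--Wolfe direction $d^k_{\rm fw}$. First I would collect the boundedness facts needed to invoke Lemma~\ref{Armijolemma}. The iterates $\{x^k\}\subseteq\mathcal F$ are bounded because $\mathcal F$ is compact. The directions $\{d^k\}$ are bounded: on an FW step $d^k=u^k_{\rm fw}-x^k$ with $u^k_{\rm fw}\in\mathcal F(x^k,\xi^k)\subseteq\mathcal F\subseteq B(0,M)$, and on an AW step $d^k=x^k-u^k_{\rm aw}$ with $u^k_{\rm aw}\in{\rm conv}(\mathcal S(x^k,\xi^k))\subseteq\mathcal F(x^k,\xi^k)\subseteq\mathcal F\subseteq B(0,M)$ (using \eqref{Sy}, the convexity of $\mathcal F(x^k,\xi^k)$, and \eqref{constrain}), so $\|d^k\|\le 2M$ in both cases. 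Finally, the initial step sizes $\widetilde\alpha^0_k$ fed into the backtracking satisfy $0<\min\{\inf_k\alpha^0_k,\,\epsilon\}\le\widetilde\alpha^0_k\le\max\{1,\zeta\}<\infty$, because $\widetilde\alpha^0_k=\alpha^0_k\in(0,1]$ on an FW step and $\widetilde\alpha^0_k=\alpha^k_{\rm aw}\in(\epsilon,\zeta]$ on an AW step, the latter window being exactly the admissibility condition imposed in Step~3.

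Next I would apply Lemma~\ref{Armijolemma} with $\Gamma:=\mathcal F$. Its hypotheses hold: $f$ is continuously differentiable on $\X\supseteq\mathcal F$; by Remark~\ref{welldefined:Alg2} we have $\langle\nabla f(x^k),d^k\rangle<0$ for every $k$ and the line search in Step~4 terminates after finitely many inner iterations, so $\alpha_k=\widetilde\alpha^0_k\eta^{j_k}$ is a well-defined positive sequence obtained by Armijo backtracking from $\widetilde\alpha^0_k$; the sequences $\{\widetilde\alpha^0_k\}$, $\{x^k\}$, $\{d^k\}$ are bounded as above, with $\inf_k\widetilde\alpha^0_k>0$; and Step~5 yields $x^{k+1}\in\mathcal F$ with $f(x^{k+1})\le f(x^k+\alpha_k d^k)$. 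Lemma~\ref{Armijolemma} then gives $\lim_{k\to\infty}\langle\nabla f(x^k),d^k\rangle=0$.

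The key step is to transfer this conclusion to the Frank--Wolfe direction. On an FW step, $d^k=d^k_{\rm fw}$, so $\langle\nabla f(x^k),d^k_{\rm fw}\rangle=\langle\nabla f(x^k),d^k\rangle$. On an AW step, the selection rule in Step~3 gives $\langle\nabla f(x^k),d^k_{\rm fw}\rangle>\langle\nabla f(x^k),d^k_{\rm aw}\rangle=\langle\nabla f(x^k),d^k\rangle$; combined with $\langle\nabla f(x^k),d^k_{\rm fw}\rangle=\langle\nabla f(x^k),u^k_{\rm fw}-x^k\rangle\le 0$ (since $x^k\in\mathcal F(x^k,\xi^k)$), this yields $0\ge\langle\nabla f(x^k),d^k_{\rm fw}\rangle\ge\langle\nabla f(x^k),d^k\rangle$. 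In either case $|\langle\nabla f(x^k),d^k_{\rm fw}\rangle|\le|\langle\nabla f(x^k),d^k\rangle|\to 0$, hence $\langle\nabla f(x^k),u^k_{\rm fw}-x^k\rangle\to 0$. Then, exactly as in the proof of Theorem~\ref{thm-fw}, since $u^k_{\rm fw}$ is an output of $\LO(\nabla f(x^k),x^k,\xi^k)$ and $x^k\in\mathcal F$,
\[
0\le G(x^k)\le\max_{y\in\mathcal F(x^k,\xi^k)}\langle\nabla f(x^k),x^k-y\rangle=\langle\nabla f(x^k),x^k-u^k_{\rm fw}\rangle\longrightarrow 0,
\]
so $G(x^k)\to 0$. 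Boundedness of $\{x^k\}$ gives accumulation points, and for any such $x^*$, passing to a subsequence $x^{k_j}\to x^*$ with $G(x^{k_j})\to 0$, Theorem~\ref{lemma:1}(ii) shows that $x^*\in\mathcal F$ is a stationary point of \eqref{P0}.

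I expect the only non-mechanical point to be the third paragraph: recognizing that the ``greater-decrease'' selection in Step~3 squeezes $\langle\nabla f(x^k),d^k_{\rm fw}\rangle$ between $0$ and $\langle\nabla f(x^k),d^k\rangle$, which is precisely what makes the stationarity-measure argument of the basic algorithm go through unchanged despite the away steps. The bookkeeping showing that the away-step admissibility window $(\epsilon,\zeta]$ keeps $\{\widetilde\alpha^0_k\}$ bounded and bounded away from $0$ (so that Lemma~\ref{Armijolemma} still applies) is the other spot needing care, but it is routine.
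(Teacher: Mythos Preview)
Your proof is correct and follows essentially the same approach as the paper's: both use Lemma~\ref{Armijolemma} (applied to $\{\widetilde\alpha^0_k\}$, whose boundedness away from~$0$ is guaranteed by the $(\epsilon,\zeta]$ window on AW steps) to get $\langle\nabla f(x^k),d^k\rangle\to 0$, then use the Step~3 comparison $\langle\nabla f(x^k),d^k_{\rm fw}\rangle\ge\langle\nabla f(x^k),d^k\rangle$ to transfer this to $d^k_{\rm fw}$ and conclude via $G$ and Theorem~\ref{lemma:1}(ii). The paper splits into two cases (FW versus AW taken infinitely often along a subsequence) to do this transfer, whereas you handle both cases at once with the uniform sandwich $0\ge\langle\nabla f(x^k),d^k_{\rm fw}\rangle\ge\langle\nabla f(x^k),d^k\rangle$, giving the slightly stronger conclusion $G(x^k)\to 0$ along the whole sequence---a minor streamlining, not a different route.
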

\begin{proof}
Note that $\{x^k\}$ is bounded because $\{x^k\}\subseteq {\cal F}$. Similarly, $\{u^k_{\rm aw}\}$ and $\{u^k_{\rm fw}\}$ are bounded, and we also have the boundedness of $\{\xi^k\}$ in view of \cite[Theorem~2.6]{Tuy98}, the continuity of $P_2$ and the boundedness of $\{x^k\}$.

Next, observe from Step 3 of Algorithm~\ref{alg:aw-fw}, the descent property of the FW direction (i.e., $\langle \nabla f(x^k), u^k_{\rm fw} -x^k \rangle \leq \langle \nabla f(x^k), x^k -x^k\rangle = 0$ for all $k$) and the assumption that $\{x^k\}$ is an infinite sequence (so that $\langle \nabla f(x^k), u^k_{\rm fw} -x^k \rangle < 0$ according to Step 1)
that $ d^{k} $ is a descent direction for every $ k $, i.e., $ \langle \nabla f(x^k), d^k\rangle < 0 $ for every $ k $. In view of this, \eqref{ls_awfw} and Lemma~\ref{Armijolemma} (with $\Gamma:= \mathcal{F}$), we have
\begin{equation}\label{limitf}
\lim_{k\to\infty}\langle \nabla f(x^k), d^k\rangle = 0.
\end{equation}

Now, let $x^*$ be an accumulation point of $\{x^k\}$. Then, in view of the boundedness of $ \{(x^k,\xi^k,u^k_{\rm aw},u^k_{\rm fw})\} $, there exists a subsequence $ \{(x^{k_t},\xi^{k_t},u^{k_t}_{\rm aw},u^{k_t}_{\rm fw})\} $ that
converges to $ (x^*, \xi^*, u^*_{\rm aw} , u^*_{\rm fw}) $ for some $\xi^*\in\partial P_2(x^*)$ (thanks to the closedness of $\partial P_2$), $u^*_{\rm aw}\in {\cal F}$ and $u^*_{\rm fw}\in {\cal F}$. We consider two cases.

{\bf Case 1}: Suppose that $ \{x^{k_t}\} $ is followed by infinitely many FW steps and finitely many AW steps. By passing to a further subsequence, we assume without loss of generality that $ x^{k_t} $ is followed by an FW step for all $t$, i.e., $d^{k_t} = u^{k_t}_{\rm fw} - x^{k_t}$ for all $t$. Since $u^k_{\rm fw}$ is an output of $\LO(\nabla f(x^k),x^k,\xi^k)$, we have from the definition of $G$ in \eqref{defG} and \eqref{limitf} that
\[
0 \le G(x^{k_t})\le \langle \nabla f(x^{k_t}), x^{k_t}-u^{k_t}_{\rm fw}\rangle = - \langle \nabla f(x^{k_t}), d^{k_t}\rangle \to 0.
\]
This together with Theorem~\ref{lemma:1}(ii) shows that $x^*$ is a stationary point of \eqref{P0}.

{\bf Case 2}: Suppose that the AW step is invoked infinitely many times in $ \{x^{k_t}\} $.
Passing to a further subsequence, we assume without loss of generality that $ x^{k_t} $ is followed by an AW step for all $ t $, i.e., $d^{k_t} = x^{k_t} - u^{k_t}_{\rm aw}$ for all $t$. Then in view of Step 3, we have
\begin{equation}\label{dirule}
\langle \nabla f(x^{k_t}), x^{k_t}-u^{k_t}_{\rm aw}\rangle = \langle \nabla f(x^{k_t}), d^{k_t}\rangle < \langle \nabla f(x^{k_t}), u^{k_t}_{\rm fw}-x^{k_t}\rangle.
\end{equation}
Now, recalling the definition of $G$ in \eqref{defG} and the facts that $\xi^{k_t} \in \partial P_2(x^{k_t})$ and $u^{k_t}_{\rm fw}$ is an output of $\LO(\nabla f(x^k),x^k, \xi^k)$, we deduce further that
\[
\begin{aligned}
0\le G(x^{k_t})&\le \max_{y \in \mathcal{F}(x^{k_t}, \xi^{k_t})} \langle \nabla f(x^{k_t}), x^{k_t}-y\rangle = \langle \nabla f(x^{k_t}), x^{k_t}-u^{k_t}_{\rm fw}\rangle \\
& \le - \langle \nabla f(x^{k_t}), x^{k_t}-u^{k_t}_{\rm aw}\rangle = - \langle \nabla f(x^{k_t}), d^{k_t}\rangle \rightarrow 0,
\end{aligned}
\]
where the second inequality follows from \eqref{dirule} and the last relation follows from \eqref{limitf}.
This together with Theorem~\ref{lemma:1}(ii) implies that $ x^* $ is a stationary point of problem~\eqref{P0}. This completes the proof.
\qed \end{proof}

\subsection{{$ \AWO $} for $P_1$ and $P_2$ as in Assumption~\ref{assump31} or \ref{assump32}}\label{sec:aw3exs}
In this section, we discuss how to construct the set $ \mathcal S(y,\xi) $ and obtain a solution for $\AWO$ when $ P_1 $ and $ P_2 $ are described as in Assumption~\ref{assump31} or \ref{assump32}.

Note that the $ P_1 $ in Assumption~\ref{assump31} or \ref{assump32} are ``atomic norms" that can be written as a gauge function of the form:
\begin{equation}\label{atomic}
\|x\|_\mathcal{A} = \inf\{t\geq 0:\; x\in t\;{\rm conv}(\mathcal{A}) \}
\end{equation}
for some symmetric compact atomic set $\mathcal{A}\subseteq \X $ with $0 \in {\rm conv}({\cal A})$. For more discussions on atomic norm, we refer the readers to \cite{Chandrasekaran10}. Below, we list the atomic sets for the $P_1$ in the two scenarios discussed in Assumptions \ref{assump31} and \ref{assump32}.
\begin{itemize}
	\item {\bf Scenario~1.} $ \mathcal{A}_{\rm gl} = \bigcup_{J\in{\cal J}}\mathcal{A}_J$ with
\[
\mathcal A_{J} = \{x:\,\|x_{J}\| = 1,\; x_I= 0, \forall I\in \mathcal J\setminus \{J\} \}
\]
for each $ J\in\mathcal J $, where $ \mathcal J $ is a partition of $ \{1,\ldots, n\} $. This corresponds to $ P_1(x) = \sum_{J\in\mathcal J} \|x_{J}\| $ (see \cite[Corollary~2.2]{RaoRecht12}) in Assumption~\ref{assump31}.
	\item {\bf Scenario~2.} $ \mathcal A_* = \{uv^T:\; u\in\R^m, v\in\R^n \;{\rm with}\;\|u\| = \|v\| = 1 \} $. This corresponds to $ P_1(X) = \|X\|_* $ (see \cite[Section~2.2]{Chandrasekaran10}) in Assumption~\ref{assump32}.
\end{itemize}
For the rest of this section, we will focus on \eqref{atomic} with ${\cal A} = {\cal A}_{\rm gl}$ or ${\cal A}_*$.
Based on these two atomic sets, we first construct a set that provides potential choices for the elements of $ \mathcal S(y,\xi) $ in $\AWO$ for problems in Sections~\ref{gl} and \ref{mc}.
\begin{proposition}\label{atomprop}
	Consider \eqref{P0}. Let $ P_1 = \|\cdot\|_{\mathcal A} $ as in \eqref{atomic} with ${\cal A} = {\cal A}_{\rm gl}$ or ${\cal A}_*$, and let $ P_2 $ be a norm function such that $ P_2\leq \mu P_1$ for some $ \mu \in[0,1) $. Given $ y\in\mathcal F $ and $ \xi\in\partial P_2(y) $, let
	\[
	\mathcal V(y,\xi) = \left\{\frac{{\sigma}s}{1-\langle \xi, s\rangle}:\; s\in {\cal A} \right\}.
	\]
    Then the following statements hold:
	\begin{enumerate}[{\rm (i)}]
		\item For any $ s \in {\cal A}$, we have $ 1-\langle\xi, s\rangle >0 $;
		\item For any $ v\in\mathcal{V}(y,\xi) $, it holds that
\[\|v\|_\mathcal{A} - P_2(y) - \langle \xi, v - y\rangle = \|v\|_\mathcal{A} - \langle \xi, v \rangle =\sigma.\]
	\end{enumerate}
\end{proposition}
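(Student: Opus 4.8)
The plan is to prove (i) first and then bootstrap (ii) from it, using only positive homogeneity of the atomic norm together with the standard identity $\langle\xi, y\rangle = P_2(y)$ for a subgradient of a norm.

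For (i), the first observation is that every atom $s$ in $\mathcal{A}_{\rm gl}$ or $\mathcal{A}_*$ satisfies $\|s\|_{\mathcal{A}} = P_1(s) = 1$: for $s\in\mathcal{A}_J$ we have $P_1(s) = \sum_{I\in\mathcal{J}}\|s_I\| = \|s_J\| = 1$, and for $s = uv^T\in\mathcal{A}_*$ we have $P_1(s) = \|uv^T\|_* = \|u\|\|v\| = 1$, where we use the identifications of $P_1$ with $\|\cdot\|_{\mathcal{A}}$ in \eqref{atomic} recalled just before the proposition. Since $P_2\le\mu P_1$ with $\mu\in[0,1)$, this gives $P_2(s)\le\mu<1$ for every $s\in\mathcal{A}$. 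Next, because $\xi\in\partial P_2(y)$, the subgradient inequality at $z = y+s$ gives $P_2(y+s)\ge P_2(y) + \langle\xi, s\rangle$, and combining this with the triangle inequality $P_2(y+s)\le P_2(y)+P_2(s)$ yields $\langle\xi, s\rangle\le P_2(s)\le\mu<1$; hence $1-\langle\xi, s\rangle\ge 1-\mu>0$, proving (i).

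For (ii), I would fix $v\in\mathcal{V}(y,\xi)$, say $v = \frac{\sigma s}{1-\langle\xi, s\rangle}$ with $s\in\mathcal{A}$; by (i) the scalar $\frac{\sigma}{1-\langle\xi, s\rangle}$ is well-defined and strictly positive. Positive homogeneity of $\|\cdot\|_{\mathcal{A}}$ together with $\|s\|_{\mathcal{A}}=1$ then gives $\|v\|_{\mathcal{A}} = \frac{\sigma}{1-\langle\xi, s\rangle}$, while $\langle\xi, v\rangle = \frac{\sigma\langle\xi, s\rangle}{1-\langle\xi, s\rangle}$, so that $\|v\|_{\mathcal{A}} - \langle\xi, v\rangle = \frac{\sigma(1-\langle\xi, s\rangle)}{1-\langle\xi, s\rangle} = \sigma$. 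Finally, since $P_2$ is a norm and $\xi\in\partial P_2(y)$, we have $\langle\xi, y\rangle = P_2(y)$ (the same fact used in Sections~\ref{gl} and \ref{mc}; it follows from the subgradient inequality evaluated at $z=0$ and $z=2y$), so $P_2(y) + \langle\xi, v-y\rangle = \langle\xi, v\rangle$, and therefore $\|v\|_{\mathcal{A}} - P_2(y) - \langle\xi, v-y\rangle = \|v\|_{\mathcal{A}} - \langle\xi, v\rangle = \sigma$, as claimed.

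This argument is essentially routine and I do not expect a genuine obstacle. The one point deserving a moment of care is the claim $\|s\|_{\mathcal{A}}=1$ for atoms $s$ --- equivalently, that the atoms sit on the boundary of the $P_1$-unit ball rather than strictly inside it --- which is why I would verify it explicitly for $\mathcal{A}_{\rm gl}$ and $\mathcal{A}_*$ from their concrete descriptions rather than invoking a general atomic-norm fact.
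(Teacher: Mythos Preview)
Your proposal is correct and follows essentially the same route as the paper: verify $\|s\|_{\mathcal A}=1$ for atoms, bound $\langle\xi,s\rangle\le P_2(s)\le\mu<1$, then use positive homogeneity and the norm--subgradient identity $\langle\xi,y\rangle=P_2(y)$ for (ii). The only cosmetic difference is in how the bound $\langle\xi,s\rangle\le P_2(s)$ is obtained: the paper writes $P_2=\mu\gamma$ and uses the dual-norm inequality $\langle\xi,s\rangle\le\gamma^\circ(\xi)\gamma(s)$ together with $\gamma^\circ(\xi)\le\mu$, whereas you combine the subgradient inequality at $y+s$ with the triangle inequality --- both arguments yield the same estimate in one line.
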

\begin{proof}
	For notational simplicity, we write $ P_2(x)= \mu \gamma(x) $ for some norm function $\gamma$, and use $ \gamma^\circ $ to denote the dual norm of $ \gamma $. Since $ \xi\in\partial P_2(y) $, we see that $ \gamma^\circ(\xi)\leq \mu $. Since we have $\|s\|_{\cal A} = 1$ for all $s\in {\cal A}$ (recall that ${\cal A} = {\cal A}_{\rm gl}$ or ${\cal A}_*$), it follows that
	\begin{equation*}
	\begin{split}
	&1-\langle\xi, s\rangle \geq \|s\|_\mathcal{A} - \gamma^\circ(\xi)\gamma(s)\geq \|s\|_\mathcal{A}-\mu \gamma(s) = P_1(s)- P_2(s) \\
	&\overset{\rm (a)}\geq P_1(s)-\mu P_1(s) = 1-\mu >0,
	\end{split}
	\end{equation*}
	where the first inequality and the last equality holds because $P_1(s) = \|s\|_\mathcal{A}=1 $, and (a) holds because $ P_2\leq \mu P_1  $. This proves (i).
	
	We now prove (ii). Fix any $ v\in\mathcal{V}(y,\xi) $. Then we have
	\begin{equation*}
	\begin{split}
	&\|v\|_\mathcal{A} - P_2(y)- \langle\xi, v-y\rangle= \frac{{\sigma}\left(\|s\|_\mathcal{A} -\langle \xi, s\rangle \right)}{1-\langle \xi, s\rangle} -P_2(y) +\langle \xi, y\rangle \\
	&\overset{\rm (a)}= \sigma - P_2(y) +\langle\xi,y\rangle \overset{\rm (b)}= \sigma,
	\end{split}
	\end{equation*}
	where (a) holds since $ \|s\|_\mathcal{A} = 1 $, and (b) holds as $P_2$ is a norm and $\xi\in \partial P_2(y)$. 
\qed \end{proof}
In the next two subsections, we will present a strategy of choosing $ \mathcal{S}(y,\xi) $ in {\bf Scenarios 1} and {\bf 2}, and discuss how to determine the stepsize $ \alpha_{\rm aw} $ in Step~2 in Algorithm~\ref{alg:aw-fw}. Here, we first exclude the case $ y=0 $: In this case, the away-step direction $ d_{\rm aw} $ will always be inferior to $ d_{\rm fw} $ in Step~3 if we choose $ \xi=0 $ (note that $ 0\in\partial P_2(0) $ since $P_2$ is a norm). To see this, note that $ \mathcal{F}(0,0)=\{x:\; \|x\|_\mathcal{A}\leq \sigma \} $. Therefore, we have
\[
\langle \nabla f(0), 0 - u_{\rm aw} \rangle = \langle \nabla f(0), (-u_{\rm aw}) - 0\rangle \geq \langle \nabla f(0), u_{\rm fw}-0\rangle,
\]
where we use the facts that $ -u_{\rm aw}\in\mathcal{F}(0,0) $ (thanks to the symmetry of $ \mathcal{F}(0,0) $) and $ u_{\rm fw} $ is the output of $\LO(\nabla f(0),0,0)$ for the last inequality. In other words, if $x^k = 0$ and is not stationary, and if we set $\xi^k = 0$, then $d^k = u^k_{\rm fw} - x^k$.

\subsubsection{Constructing $ \mathcal S(y, \xi) $ in $\AWO$ in Step 2 of Algorithm~\ref{alg:aw-fw}} \label{sec:syxi}

Let $ P_1 $ and $ P_2 $ be as in Assumption~\ref{assump31} or \ref{assump32}. As discussed before, these $ P_1 $ can be written as $ P_1=\|\cdot\|_\mathcal A $ with ${\cal A} = {\cal A}_{\rm gl}$ or ${\cal A}_*$.

We now discuss the choice of ${\cal S}(y,\xi)$ in \eqref{aw-step} when $y\in\mathcal{F}\setminus\{0\}$. To this end, fix any $ y\in\mathcal{F}\setminus\{0\}$ and pick $ \xi\in\partial P_2(y)$. From the definition of the atomic norm \eqref{atomic}, there exist a positive integer $ m\leq n+1 $, a set of atoms $ \{s_i\}_{i=1}^m\subset \mathcal A $ and nonnegative coefficients $ \{r_i\}_{i=1}^m $ such that
\begin{equation}\label{atomrep}
 y = \|y\|_\mathcal{A} \sum\limits_{i=1}^m r_i s_i \;\text{and}\;  \sum\limits_{i=1}^m r_i =1 .
\end{equation}
Specifically, for ${\cal A} = {\cal A}_{\rm gl}$ and ${\cal A}_*$ in {\bf Scenarios 1} and {\bf 2}, respectively, we can derive the representation in \eqref{atomrep} with
\begin{enumerate}[{\rm (i)}]
	\item $ m = \lvert\mathcal{J}\rvert, $ $ r_i = \|y_{J_i}\|/\sum_{i=1}^m \|y_{J_i}\| $ and $ s_i = {\rm Sgn}(y_{J_i}) $ for $ i = 1,\ldots,m $, where we number the elements of ${\cal J}$ as $\{J_1,\ldots,J_m\}$,  $ y=[y_{J_i}]_{i=1}^m\in\R^n $ and ${\rm Sgn}$ is defined in \eqref{sign};
	\item $ m = {\rm rank}(Y)$, $ r_i = \lambda_i/\|Y\|_* $ and $ s_i = u_iw_i^T $ for $ i=1, \ldots, {\rm rank}(Y) $, where $ u_i $'s (resp., $ w_i $'s) are columns of $ U $ (resp., $ W $) from the thin SVD of $ Y= U\Lambda W^T $, and $ \lambda_i $'s are the diagonal entries of $ \Lambda $.
\end{enumerate}

With respect to \eqref{atomrep}, we define $ \mathcal I_y:=\{i:\, r_i\neq 0, i=1, \ldots, m\} $ and
\begin{equation}\label{vicidef}
v_i := \frac{\sigma s_i}{1-\langle\xi, s_i\rangle}, \;{\rm and}\;   c_i := \dfrac{r_i\|y\|_\mathcal{A}(1-\langle\xi, s_i\rangle)}{\sigma},\quad  \forall i=1, \ldots, m.
\end{equation}
It then follows that
\begin{equation}\label{ydcomp1}
y = \sum\limits_{i=1}^m c_i v_i = \sum\limits_{i\in\mathcal I_y} c_i v_i.
\end{equation}
Now, using Proposition~\ref{atomprop} and the definition of $ c_i $ and $ v_i $ in \eqref{vicidef}, and recalling that $ y\neq 0 $ and $ r_i>0 $ for every $ i\in\mathcal{I}_y $, one can deduce that $ v_i \in \bd \mathcal F(y,\xi) $ (defined in \eqref{Fyxi}) and $ c_i>0$ for every $ i\in\mathcal{I}_y $. Moreover, it holds that
\begin{equation}\label{sumci}
\sum\limits_{i\in\mathcal I_y} c_i = \sum\limits_{i\in\mathcal I_y} \frac{r_i\|y\|_\mathcal A - \|y\|_\mathcal A \langle\xi,r_i s_i\rangle }{\sigma} \overset{\rm (a)} = \frac{\|y\|_\mathcal A -\langle \xi, y \rangle}{\sigma} \overset{\rm (b)} \le 1,
\end{equation}
where (a) uses \eqref{atomrep} and the fact $ \sum_{i\in\mathcal I_y} r_i = \sum_{i=1}^m r_i=1 $, and (b) holds because $ y\in\mathcal F (y,\xi) $ (see \eqref{constrain}), $\xi\in \partial P_2(y)$ and $P_2$ is a norm. When $ \sum_{i\in {\mathcal{I}_y}} c_i = 1$, the decomposition \eqref{ydcomp1} of $ y $ satisfies the conditions in Definition~\ref{awdef}. Therefore, we can choose $ \mathcal{S}(y,\xi)=\{v_i:\;i\in\mathcal{I}_y\} $ {\color{blue} if $ \sum_{i\in {\mathcal{I}_y}} c_i = 1$}. Otherwise, choose any $ i_0\in\mathcal{I}_y $ and set $ \bar c=1-\sum_{i\in\mathcal{I}_y} c_i > 0$. Then
\begin{equation}\label{ydcomp2}
\begin{split}
	y = c_{i_0}v_{i_0} + \sum_{i\in\mathcal{I}_y\setminus\{i_0\}}c_i v_i \overset{\rm (a)}=& \sum_{i\in\mathcal{I}_y\setminus\{i_0\}}c_i v_i +  \left(c_{i_0}+\frac{\bar c(1-\langle\xi, s_{i_0}\rangle )}{2}\right)v_{i_0} \\
	&+ \frac{\bar c(1+\langle\xi, s_{i_0}\rangle )}{2}\cdot \frac{-\sigma s_{i_0}}{(1+\langle\xi, s_{i_0}\rangle)},
\end{split}
\end{equation}
where (a) uses the fact that $ v_{i_0}=\frac{\sigma s_{i_0}}{1-\langle\xi, s_{i_0}\rangle} $.
Note that $ \frac{-\sigma s_{i_0}}{1+\langle\xi, s_{i_0}\rangle}\in\mathcal{V}(y,\xi) $ since $-s_{i_0}\in {\cal A}$ and $ \|-s_{i_0}\|_{\cal A} = 1$. Furthermore, we can check directly that the decomposition of $ y $ in \eqref{ydcomp2} is a convex combination of elements in $ \mathcal{V}(y,\xi)\subset \mathcal F(y,\xi) $ (see Proposition~\ref{atomprop}(i) for the positivity of $1 \pm \langle\xi,s_{i_0}\rangle$). In summary, we can choose the set $ \mathcal S(y,\xi) $ in \eqref{aw-step} as
\begin{equation}\label{Syxi}
\mathcal{S}(y, \xi) = \begin{cases}
\left\{\frac{\sigma s_i}{1-\langle\xi, s_i\rangle}: i\in \mathcal I_y\right\} & {\rm if}\; \sum\limits_{i\in\mathcal{I}_y} c_i=1;\\
 \left\{\frac{\sigma s_i}{1-\langle\xi, s_i\rangle}: i\in \mathcal I_y\right\}\cup \left\{\frac{-\sigma s_{i_0}}{1+\langle\xi, s_{i_0}\rangle}\right\} & {\rm otherwise,}
\end{cases}
\end{equation}
where $i_0$ is an arbitrarily chosen element in ${\mathcal{I}}_y$.

Now we give a solution $u_{\rm aw}$ of \eqref{aw-step}, which can be found by maximizing $ \langle a, x\rangle $ over the finite discrete set $ \mathcal S(y, \xi) $. In particular, a solution of \eqref{aw-step} is
\begin{equation}\label{uaw}
u_{\rm aw} =
\begin{cases}
\frac{\sigma s_{i_*}}{1-\langle\xi, s_{i_*}\rangle}  & {\rm if}\; \sum\limits_{i\in\mathcal{I}_y} c_i=1 \;{\rm or}\; \langle a, \frac{-\sigma s_{i_0}}{1+\langle \xi, s_{i_0}\rangle}\rangle < \langle a, \frac{\sigma s_{i_*}}{1-\langle \xi, s_{i_*}\rangle}\rangle;  \\
\frac{-\sigma s_{i_0}}{1+\langle \xi, s_{i_0}\rangle} & {\rm otherwise,}
\end{cases}
\end{equation}
where $ i_*\in\Argmax_{i\in\mathcal I_y} \left\langle a, \frac{\sigma s_i}{1-\langle\xi, s_i\rangle}\right\rangle  $. Recall that $i_0$ can be chosen to be any element in ${\cal I}_y$. In our numerical experiments in Section~\ref{sec7} below, for simplicity, we choose $i_0 = i_*$ in \eqref{uaw}.

\subsubsection{Choosing the $ \alpha_{\rm aw} $ in Step 2 of Algorithm~\ref{alg:aw-fw}}\label{subsec612}

Let $ P_1 $ and $ P_2 $ be as in Assumption~\ref{assump31} or \ref{assump32}.
Given $ y \in\mathcal F\setminus\{0\}$ and $ \xi\in\partial P_2(y)$, with $ \mathcal S(y,\xi) $ and $ u_{\rm aw} $ determined as in \eqref{Syxi} and \eqref{uaw}, we discuss how to find an $ \alpha_{\rm aw} \le \max\{\alpha\geq 0:y+\alpha d_{\rm aw}\in {\rm conv}(\mathcal S(y,\xi))\} $ along the away-step direction $ d_{\rm aw}  = y - u_{\rm aw}$. Note that the sets $ \mathcal S(y, \xi) $ in Section~\ref{sec:syxi} are all discrete sets with finite elements. For notational simplicity, we write
\[
\mathcal S(y,\xi) = \{v_1, \ldots, v_{q} \}
\]
with $ q=\lvert\mathcal{I}_y\rvert $ or $ \lvert\mathcal{I}_y\rvert+1 $, where $\mathcal{I}_y$ is defined as in Section~\ref{sec:syxi}. According to \eqref{ydcomp2} and \eqref{uaw}, we see that $ u_{\rm aw}\in\mathcal S(y,\xi) $ and
\begin{equation}\label{yrep}
	y = \sum_{i=1}^{q} \widetilde c_i v_i
\end{equation}
with $\widetilde c_i> 0 $ for every $ i=1,\ldots,q $ and $ \sum_{i=1}^{q}\widetilde c_i=1 $. Suppose that $ u_{\rm aw} = v_{i_1} $ for some $  v_{i_1}\in \mathcal S(y,\xi)$. Then one can choose
\begin{equation*}
	\alpha_{\rm aw} \le \max\{\alpha\geq 0:\; y+\alpha(y - v_{i_1}) \in {\rm conv} (\mathcal S(y, \xi))\}.
\end{equation*}
Define $ y(\alpha) = y+\alpha(y-v_{i_1}) $. Using \eqref{yrep}, we have
\[
\begin{split}
y(\alpha) = (1+\alpha)\sum\limits_{i\neq i_1}\widetilde c_i v_i + ((1+\alpha)\widetilde c_{i_1} -\alpha) v_{i_1}.
\end{split}
\]
To ensure that $ y(\alpha)\in {\rm conv}(S(y,\xi)) $, we impose the following conditions on $ \alpha $:
\begin{align}
	&(1+\alpha)\sum_{i\neq i_1}\widetilde c_i + (1+\alpha)\widetilde c_{i_1}-\alpha = 1, \label{alpha_ineq2} \\
    &(1+\alpha)\widetilde c_{i_1} -\alpha \geq 0. \label{alpha_ineq1}
\end{align}
Since $ \sum_{i=1}^{q} \widetilde c_i=1 $ (thanks to \eqref{sumci} and \eqref{ydcomp2}), we have $\sum_{i\neq i_1}\widetilde c_i =1 - \widetilde c_{i_1}$. It follows that
\[
(1+\alpha)\sum_{i\neq i_1}\widetilde c_i + (1+\alpha)\widetilde c_{i_1}-\alpha= (1+\alpha)(1-\widetilde c_{i_1})+(1+\alpha)\widetilde c_{i_1} - \alpha  = 1.
\]
That is, \eqref{alpha_ineq2} holds automatically for any $ \alpha\geq 0 $. So \eqref{alpha_ineq1} is sufficient to ensure that $ y(\alpha)\in{\rm conv}(\mathcal S(y,\xi)) $. We can thus choose
\begin{equation*}
\alpha_{\rm aw} = \min\left\{\frac{\widetilde c_{i_1}}{1-\widetilde c_{i_1}},\zeta\right\},
\end{equation*}
where $\zeta$ is given in Step 0 of Algorithm~\ref{alg:aw-fw}.
Here, we would like to mention that the expression $\frac{\widetilde c_{i_1}}{1-\widetilde c_{i_1}}$ coincides with the formula of the feasible stepsize $ \gamma_{\rm max} $ given in the FW method with {\color{blue}away steps} in convex setting proposed in \cite{Lacoste15}.

{\color{blue}
\section{Numerical experiments on matrix completion}\label{sec7}
In this section, we illustrate the performance of our methods for solving matrix completion problems (MC), which aims to recover a low rank matrix given some observed entries. A typical convex model \cite{FrGM17,Jaggi10} for MC is of the form:
\begin{equation}\label{mc:convex}
\begin{array}{cl}
\min\limits_{x\in\R^{m\times n}} & \frac12\sum\limits_{(i,j)\in\Omega}(x_{ij}-\bar x_{ij})^2\\
\text{s.t.} & \|x\|_* \leq \sigma,
\end{array}
\end{equation}
where $\Omega$ is a collection of indices, {\color{blue} $\bar x \in \R^{m \times n}$ is a matrix such that, for all $(i,j) \in \Omega$, $\bar x_{ij}$ are the given observations, {\color{blueblue} and} $\sigma > 0$; here the nuclear norm is used to induce a low rank structure. Recently, the DC regularizer $ \|x\|_* - \|x\|_F $  is used for low rank matrix completion and its numerical advantage was discussed in \cite{MaLou17}. Motivated by this, we consider in this section an MC model with a DC regularizer in the constraint as follows:
\begin{equation}\label{mc:ncvx}
\begin{array}{cl}
\min\limits_{x\in\R^{m\times n}} & f(x):=\frac12\sum\limits_{(i,j)\in\Omega}(x_{ij}-\bar x_{ij})^2 \\
\text{s.t.} & \|x\|_* - \mu \|x\|_F \leq \sigma,
\end{array}
\end{equation}
where $\mu \in (0,1)$.\footnote{\color{blueblue} Note that \eqref{mc:ncvx} satisfies  Assumption \ref{assumption1} (see the discussion after Assumption \ref{assumption1}), which further implies  the gMFCQ  in view of   Proposition \ref{slaequ}(i). This together with Proposition \ref{loc-sta} shows that any local minimizer of \eqref{mc:ncvx} is a stationary point of \eqref{mc:ncvx}.}
For the above two matrix completion models, we adapt \textbf{IF-$ (0, \infty)$} in \cite[Section~4]{FrGM17} to solve \eqref{mc:convex}, and adapt our Algorithms~\ref{FW_nonconvex} and \ref{alg:aw-fw} to solve \eqref{mc:ncvx} with $P_1(x) = \|x\|_*$ and $P_2(x) = \mu \|x\|_F$.\footnote{\color{blue}We do not consider the use of a strongly convex $P_1(x) = \|x\|_* + \frac{\rho}2\|x\|_F^2$ for some $\rho > 0$ as described in Section~\ref{sec:scsubp} because, as discussed in Remark~\ref{rem4}, the corresponding subproblem then involves computing the prohibitively expensive proximal operator in \eqref{xprox} and forming the huge matrix $\xi-\iota_* a$.} All algorithms are implemented in MATLAB,\footnote{\color{blue}We also borrow some C-codes from the implementation of \textbf{IF-$ (0, \infty)$} in \cite[Algorithm~3]{FrGM17}, which incorporated these C implementations as mex files.}
and the experiments are performed in {Matlab {\color{blueblue}2022b} on a 64-bit PC with an Intel(R) Core(TM) {\color{blueblue}i7-4790 CPU (@3.60GHz, 3.60GHz) and 32GB of RAM.}}
The MATLAB solver of the in-face extended Frank-Wolfe method \cite[Algorithm~3]{FrGM17} that implements \textbf{IF-$ (0, \infty)$} is downloaded from:\\ \url{https://github.com/paulgrigas/InFaceExtendedFW-MatrixCompletion}. \\
For convenience, we refer to Algorithms~\ref{FW_nonconvex} and \ref{alg:aw-fw} (for solving \eqref{mc:ncvx}) as \textbf{FW}$_{\bf ncvx}$ and \textbf{AFW}$_{\bf ncvx}$ respectively, and refer to the in-face extended Frank-Wolfe solver (for solving \eqref{mc:convex}) as {\sf InFaceFW-MC} below.


{\color{blueblue}
In the tests, we use the standard datasets {\sf MovieLens10M, MoviLens20M, MovieLens32M} and {\sf  Netflix Prize} that contain ratings data  for movies given by  users. See Table \ref{table:dataset} for details about these datasets.}\footnote{
{\color{blueblue}The raw datasets {\sf MovieLens10M, MovieLens20M} and {\sf MovieLens32M}
have $(m,n)=(71567,10681)$, $(138493,27278)$ and $(200971,87585)$, respectively.}
Here, we have removed user indices and movie indices without any rating records as in \cite{FrGM17}.
{\color{blueblue}The MovieLens and Netflix Prize datasets can be downloaded from \href{https://files.grouplens.org/datasets/movielens/}{https://files.grouplens.org/datasets/movielens/} and
\href{https://archive.org/download/nf\_prize\_dataset.tar}{https://archive.org/download/nf\_prize\_dataset.tar}, respectively.}}
\begin{table}[ht]
\caption{Datasets}
\centering
{\color{blueblue}
\begin{tabular}{c c c c c c}
\hline
Datasets & $m$(users)  & $n$(movies)  & $|\Omega|$(ratings)  &  $|\Omega_{\rm test}|$    & $|\Omega_{\rm all}|$ \\
\hline
MovieLens10M & 69,878 &  10,677    &  7,001,117 & 2,998,937  & 10,000,054\\
MovieLens20M & 138,493 & 26,744  & 13,998,676 & 6,001,587  & 20,000,263 \\
MovieLens32M & 200,971 & 84,349   & 22,405,373 & 9,594,709  & 32,000,082 \\
Netflix Prize & 480,189 & 17,770    & 70,339,414 & 30,141,093 & 100,480,507\\
\hline
\end{tabular} \label{table:dataset}
}
\end{table}
Following the implementations in {\sf InFaceFW-MC}, we perform a pre-processing to centralize\footnote{\color{blue}The original code is from the file \texttt{read\_movielens\_10M.m} in {\sf InFaceFW-MC}.} the rating data, and then split the processed data into two parts \footnote{\color{blue}We use the \texttt{split\_matcomp\_instance.m} file in {\sf InFaceFW-MC} to split the dataset.} with 70\%  {\color{blueblue}data} used as the training dataset (i.e., the observation $ \bar x $ in \eqref{mc:convex} and \eqref{mc:ncvx}), and the remaining $ |\Omega_{\rm test}| $ (=$|\Omega_{\rm all}| - |\Omega|$) data points used as the testing dataset to check the recovery efficiency of the algorithms. For the convex model \eqref{mc:convex} considered in \cite{FrGM17}, a cross-validation was performed in {\sf InFaceFW-MC} to learn the quantity $ \sigma$ and {\color{blueblue}the same $\sigma$ is also used in model \eqref{mc:ncvx}.\footnote{\color{blueblue}We use the codes \texttt{InFace\_Extended\_FW\_sparse\_path.m} and \texttt{find\_best\_delta.m}  in {\sf InFaceFW-MC} to find  $\sigma$.}}
 {\color{blueblue}Moreover,  we also set $\mu=0.5, 0.5$ and $0.75$  in \eqref{mc:ncvx} for testing the effects of this parameter}.}

We now present the algorithmic settings for \textbf{IF-$ (0, \infty)$}, \textbf{FW}$_{\bf ncvx}$ and \textbf{AFW}$_{\bf ncvx}$. The parameters used in \textbf{IF-$ (0, \infty)$} are in default settings as the {\sf InFaceFW-MC} solver except for the maximal time limit, which we will specify below. Next we discuss the implementation details of our algorithms. We set $ c=10^{-4} $, $ \eta=1/2 $ and choose $ \alpha^0_0=1 $ and, for $k\geq 1 $,
\begin{equation*}
\alpha_k^0 = \begin{cases}
\max\{ 10^{-8}, \min\{2\alpha^{\rm fw}_k, 1\}\} & \text{if }  d^{k-1} = d^{k-1}_{\rm fw} \text{ and } j_{k-1}=0,\\
\max\{ 10^{-8}, \min\{\alpha^{\rm fw}_k, 1\}\} & \text{otherwise},
\end{cases}
\end{equation*}
where $ \alpha_k^{\rm fw} $ is the stepsize used in the most recent FW step.\footnote{\color{blue}When no FW step has been used yet, this quantity is set to $1$.} In the tests of \textbf{FW}$_{\bf ncvx}$ and \textbf{AFW}$_{\bf ncvx}$ for \eqref{mc:ncvx}, we compute a closed-form solution of $\LO$ according to Theorem~\ref{cfs-mat} with $ a=\nabla f(x^k) $ and $ \xi=\xi^k:=\argmin_{u\in \mu \partial \|x^k\|_F} \{\|u\|\} $. The generalized eigenvalue problem involved in Theorem~\ref{cfs-mat} is solved via {\sf eigifp}. When $ k\geq 1 $, we use the output of {\sf eigifp} in the previous iteration as a warm-start, and choose the $e/{\sqrt{m+n}}$ as the initial vector of {\sf eigifp} when $ k=0 $, where $e$ is the vector of all ones of dimension $ m+n $. The accuracy tolerance of {\sf eigifp} is set as $ 10^{-6} $, and we use its default maximum number of iterations. Following the implementations in {\sf InFaceFW-MC}, we do not form $ x^k $ in each iteration of \textbf{FW}$_{\bf ncvx}$ and \textbf{AFW}$_{\bf ncvx}$, but maintain the thin SVD triple $ (R^k, \Lambda^k, W^k) $ of $ x^k  = R^k\Lambda^k (W^k)^T$, and we form $ \nabla f(x^k) $ by computing $ x^k_{ij} $ for $ (i,j)\in\Omega $ only. Note that both $ u^k_{\rm fw} $ and $ u^k_{\rm aw} $ in \textbf{FW}$_{\bf ncvx}$ and \textbf{AFW}$_{\bf ncvx}$ are rank-one matrices thanks to Theorem~\ref{cfs-mat} and item (ii) under \eqref{atomrep}. Therefore, the thin SVD of $ x^{k+1}= x^k + \alpha_k d^k$ with $ d^k=u^k_{\rm fw}-x^k $ or $ x^k-u^k_{\rm aw} $ can be obtained efficiently via the rank-one SVD update technique proposed in \cite{Brand06}. Here in our implementation, we use \texttt{svd\_rank\_one\_update1.m} in {\sf InFaceFW-MC} for performing the SVD rank-one update. Moreover, when applying \textbf{AFW}$_{\bf ncvx}$ for solving \eqref{mc:ncvx}, we set $ \epsilon=10^{-5} $ and $ \zeta=10^5 $, and we compute a solution of $ \AWO(x^k, \mathcal S(x^k,\xi^k)) $ through \eqref{uaw} with $ a=\nabla f(x^k) $, $ \xi=\xi^k:=\argmin_{u\in \mu \partial \|x^k\|_F} \{\|u\|\}$ and $ \mathcal{I}_y = \{i:\Lambda^k_{ii} > 10^{-6} \} $.
Furthermore, leveraging the positive homogeneity of $ \|\cdot\|_* - \mu\|\cdot\|_F $, we adopt the strategy in \eqref{choosexk+1} to determine $ x^{k+1} $ in both \textbf{FW}$_{\bf ncvx}$ and \textbf{AFW}$_{\bf ncvx}$.\footnote{\color{blue}Note that $\|\cdot\|_* - \mu\|\cdot\|_F$ can be computed efficiently since we maintain the thin SVD of the iterates.} Finally, we terminate \textbf{FW}$_{\bf ncvx}$ and \textbf{AFW}$_{\bf ncvx}$ when the computational time reaches a given upper bound $ T^{\max} $, or the iteration number reaches 40000, or the criterion $ |\langle \nabla f(x^k), d^k\rangle| < 10^{-6} \max\{|f(x^k)+\langle \nabla f(x^k), d^k\rangle|, 1 \} $ is satisfied.


In the test, we initialize both \textbf{FW}$_{\bf ncvx}$ and \textbf{AFW}$_{\bf ncvx}$ at the zero matrix.  Regarding the initial point of \textbf{IF-$ (0, \infty)$}, we use its default options.\footnote{\color{blue}Specifically, it does not start from the zero matrix, {\color{blueblue} but from $ x^0$, where $ x^0 $ is an approximate solution of the problem $ \min\{\langle\nabla f(0), x\rangle:\, \|x\|_*\leq \sigma  \} $.}}  {\color{blueblue}Figures~\ref{fig:10M}--\ref{fig:Netflix}} show the training error $f(x^k) $ and the testing error $ \frac12\sum_{(i,j)\in \Omega_{\rm test}} ({\color{blue} x_{ij}^k} - \bar{x}_{ij})^2 $ of the three algorithms {\color{blueblue} for  datasets  in  Table \ref{table:dataset} with $\mu$ taking different values 0.25, 0.5 and 0.75} as the algorithm progresses, where $ \Omega_{\rm test} $ collects the indexes of the testing data points.
{\color{blueblue}For all the datasets, with the lapse of time,}
we see that the algorithms \textbf{FW}$_{\bf ncvx}$ and \textbf{AFW}$_{\bf ncvx}$ achieve better accuracy (with respect to both the training and test data set) than \textbf{IF-$ (0, \infty)$},  {\color{blueblue}except that the test errors of the Nexflix dataset of our methods are slightly worse than those of \textbf{IF-$ (0, \infty)$} when $\mu = 0.25$; see Figure \ref{fig:Netflix}.}
{\color{blueblue}On the other hand, as $\mu$ grows from $0.25$ to $0.75$, one can observe that the performances of \textbf{FW}$_{\bf ncvx}$ and \textbf{AFW}$_{\bf ncvx}$ become better.}

In Table~\ref{table:1}, we show the quantity
\[
\widehat\varepsilon= |\langle \nabla f(x^{k^*-1}), d^{k^*-1}\rangle| /\max\{|f(x^{k^*-1})+\langle \nabla f(x^{k^*-1}), d^{k^*-1}\rangle|, 1 \},
\]
as well as the rank\footnote{\color{blue}We count the number of singular values that exceed $10^{-6}$.} and the root mean squared error {\color{blueblue} on the testing data points (RMSE)} of $ x^{\rm out} $ {\color{blueblue} for each dataset with $\mu=0.25$, $0.5$ and $0.75$}, where $ \widehat\varepsilon $ can be seen as an approximate optimality measure,  $ k^* $ represents the number of iterations executed within {\color{blueblue}$T^{\max}$} and $ x^{\rm out} $ is computed from $ x^* $ using \emph{a reverse operation} of the centralizing operation in the pre-processing of \textbf{MovieLens10M} as in \cite{FrGM17}.\footnote{\color{blue}We note that this is different from the errors plotted in  {\color{blueblue}Figures~\ref{fig:10M}--\ref{fig:Netflix}}, where the $\bar x$ was obtained by centralizing the rating data.} The table demonstrates that our methods always obtain better recovery results within the same computational time. On the other hand, by comparing \textbf{AFW}$_{\bf ncvx}$ and \textbf{FW}$_{\bf ncvx}$, we see that the ranks of the solutions obtained by \textbf{AFW}$_{\bf ncvx}$ are much smaller, i.e., \textbf{AFW}$_{\bf ncvx}$ is a better option when a solution of small rank is preferred.\footnote{{\color{blueblue}Here, we give an intuitive explanation of the observation that \textbf{AFW}$_{\bf ncvx}$ yields a solution of lower rank. Indeed, in view of the construction of away-step oracle \eqref{aw-step}, \eqref{Syxi}, \eqref{uaw} and the update formula for $x^{k+1}$ in Step 5 of Algorithm \ref{alg:aw-fw} in Section \ref{sec6}, the next iterate $x^{k+1}$ satisfies either ${\rm rank}(x^{k+1})= {\rm rank}(x^{k})$ or ${\rm rank}(x^{k+1})= {\rm rank}(x^{k})-1$ when the AW step is accepted in the $k$th iteration; see Section~\ref{subsec612}. On the other hand, if the FW step is used in the $k$th iteration and $\alpha_k < 1$ (this condition almost always holds in our experiments), then the rank can change by at most $1$, and empirically we observe that the rank keeps increasing.
We have to emphasize that although it is observed numerically that \textbf{AFW}$_{\bf ncvx}$ yields an approximate solution with a lower rank compared with that of \textbf{FW}$_{\bf ncvx}$, there is no theoretical guarantee for the relative magnitude of the ranks of the solutions returned by these methods.} }




\captionsetup[figure]{labelfont={color=blueblue},font={color=blueblue}}
\begin{figure}[h]
	\caption{Training error and test error of \textbf{IF}-$ (0, \infty)$, \textbf{FW}$_{\bf ncvx}$ and \textbf{AFW}$_{\bf ncvx}$ for {\sf MovieLens10M}.  The figures in the first, second and third row correspond to  $\mu = 0.25$, $0.5$ and $0.75$, respectively.}
	\label{fig:10M}
	\centering
	\includegraphics[scale=0.191]{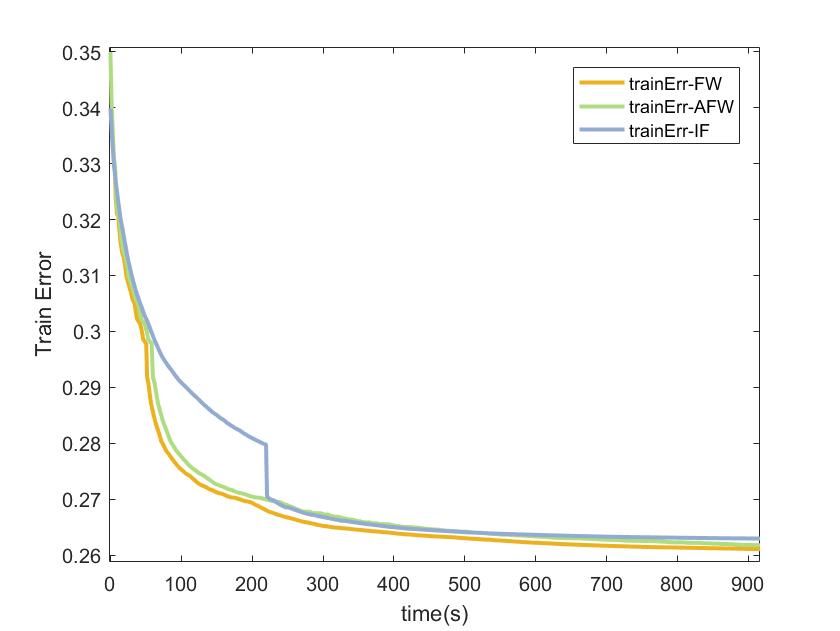}
	\includegraphics[scale=0.191]{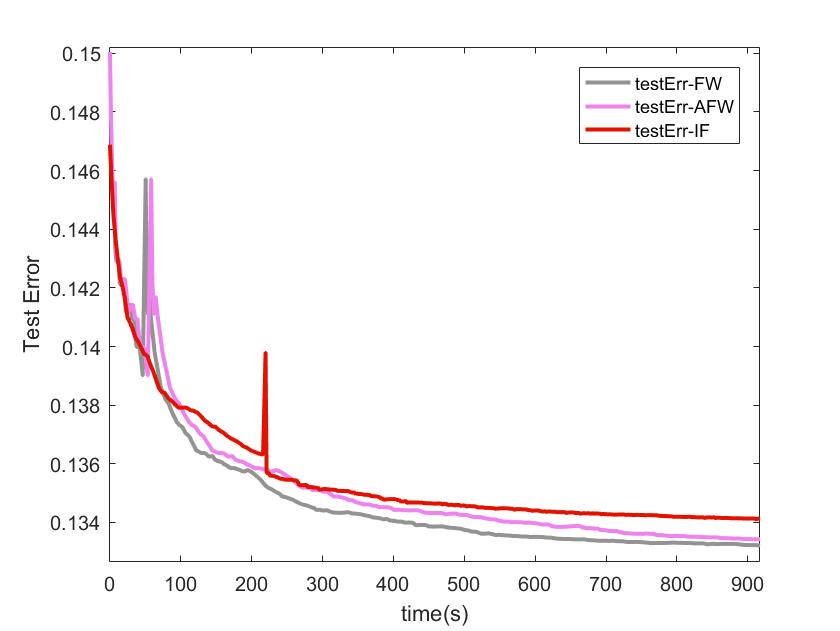}

	\includegraphics[scale=0.191]{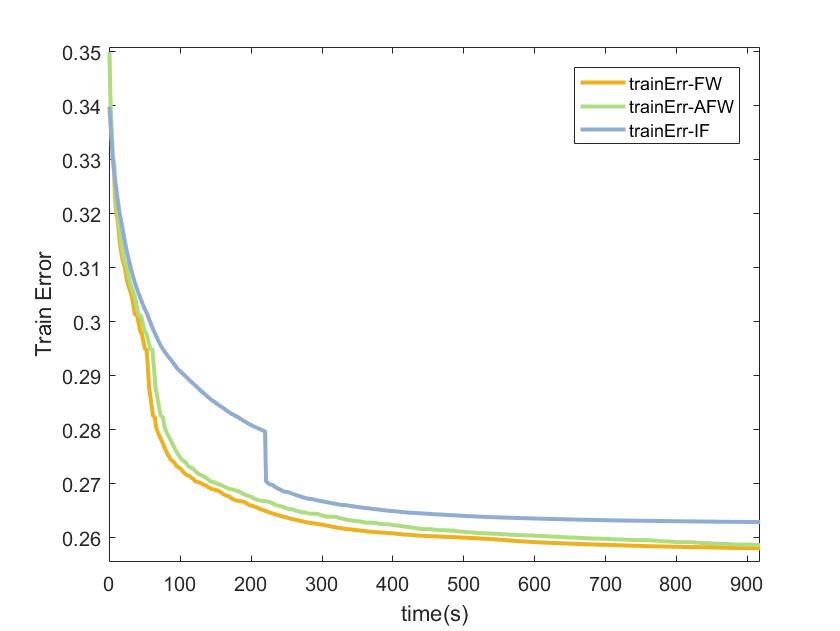}
	\includegraphics[scale=0.191]{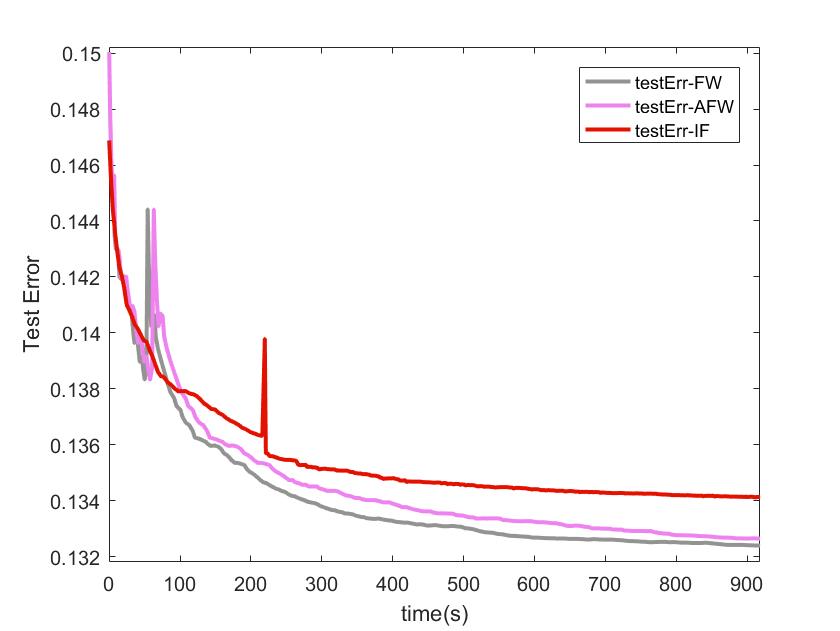}

	\includegraphics[scale=0.191]{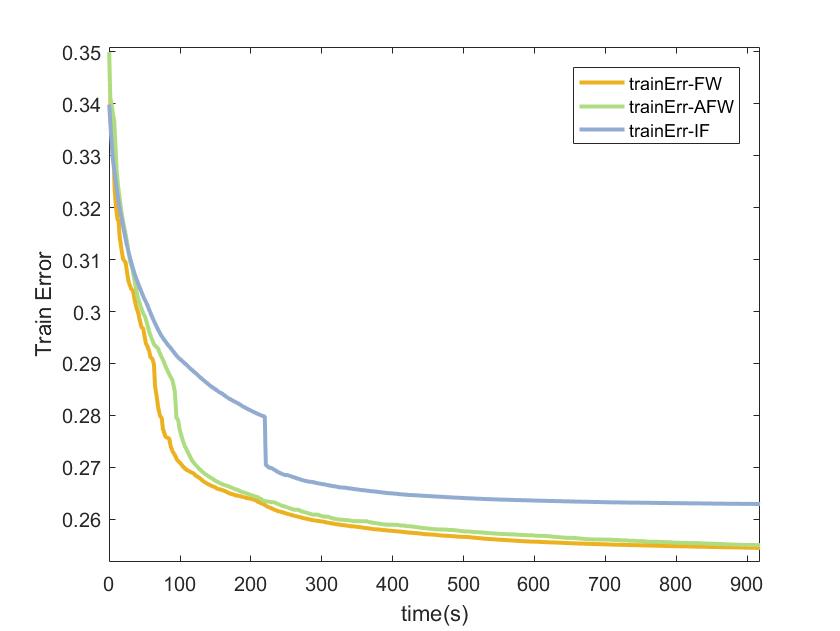}
	\includegraphics[scale=0.191]{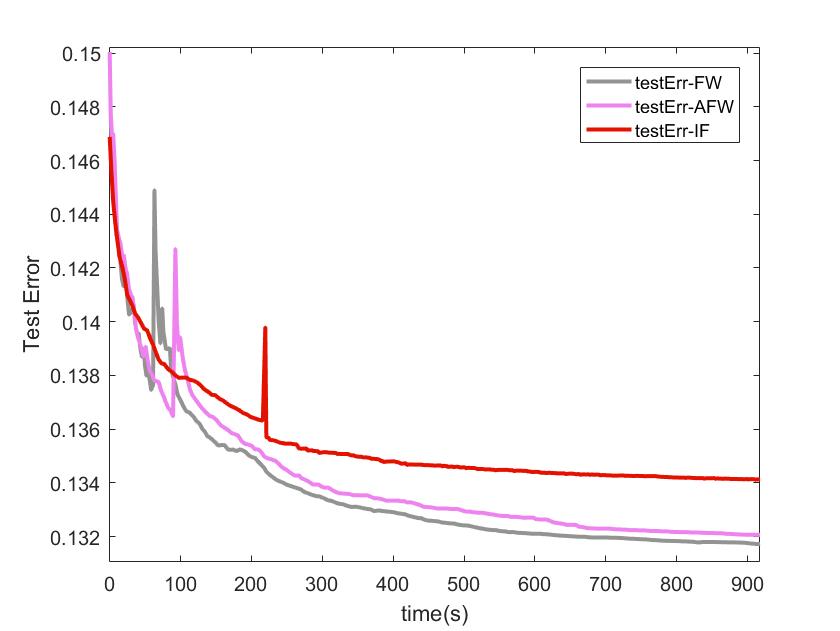}
\end{figure}

\captionsetup[figure]{labelfont={color=blueblue},font={color=blueblue}}
\begin{figure}[h]
	\caption{Training error and test error of \textbf{IF}-$ (0, \infty)$, \textbf{FW}$_{\bf ncvx}$ and \textbf{AFW}$_{\bf ncvx}$ for {\sf MovieLens20M}.  The figures in the first, second and third row correspond to  $\mu = 0.25$, $0.5$ and $0.75$, respectively.}
	\label{fig:20M}
	\centering
	\includegraphics[scale=0.191]{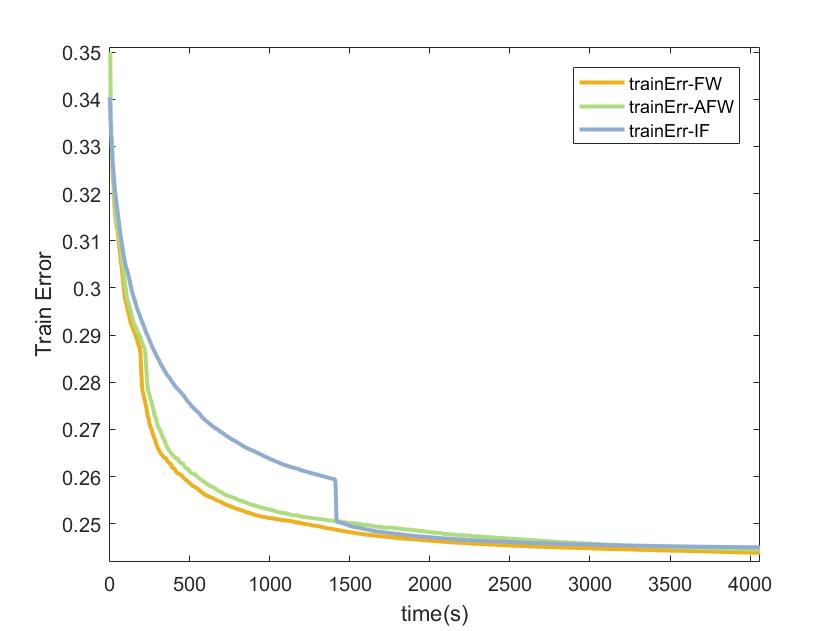}
	\includegraphics[scale=0.191]{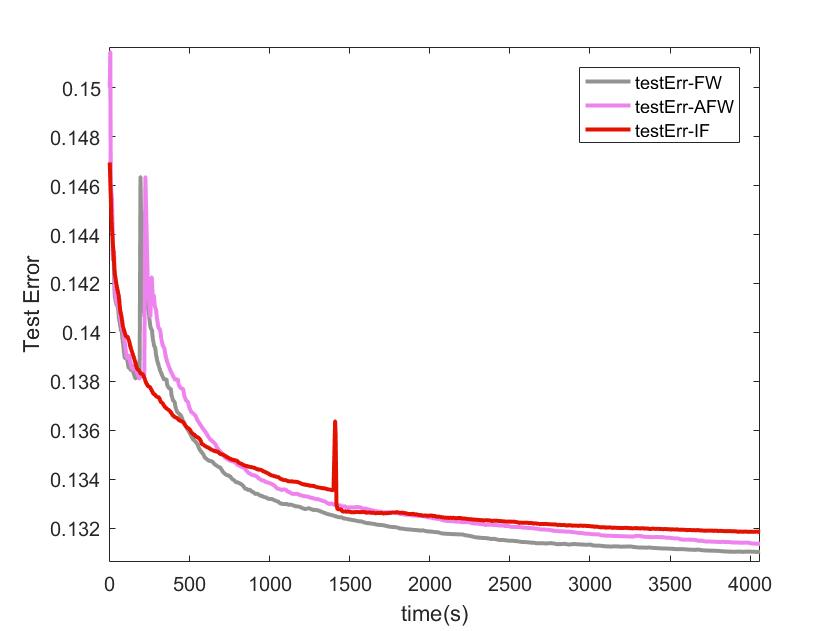}

	\includegraphics[scale=0.191]{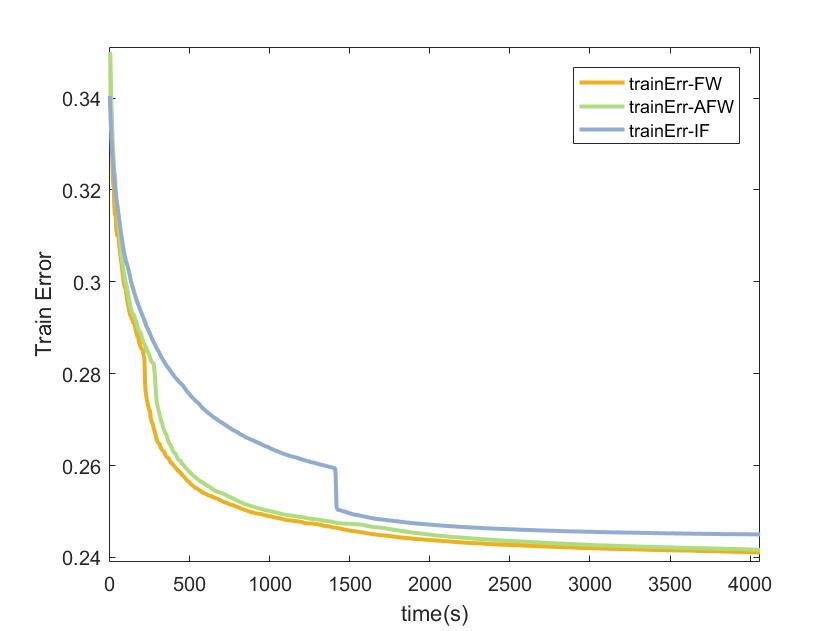}
	\includegraphics[scale=0.191]{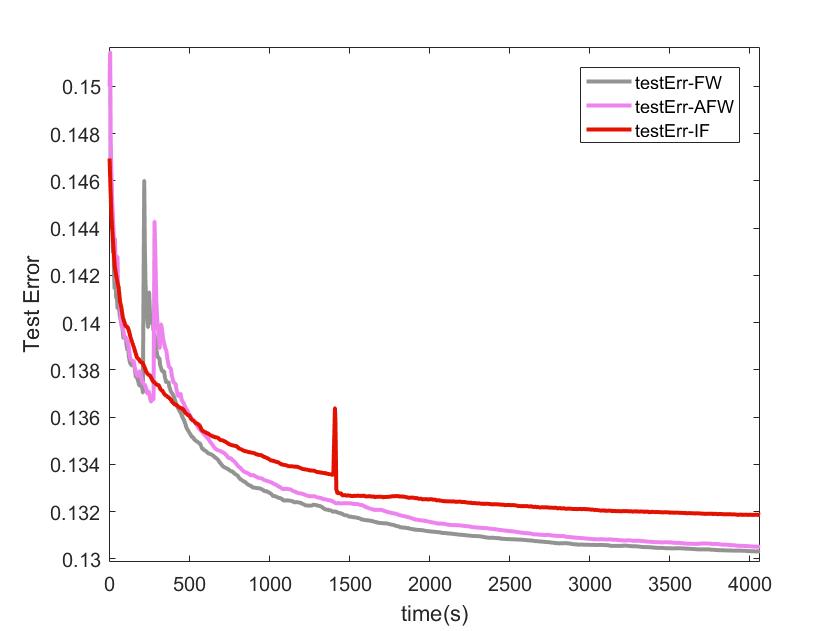}

	\includegraphics[scale=0.191]{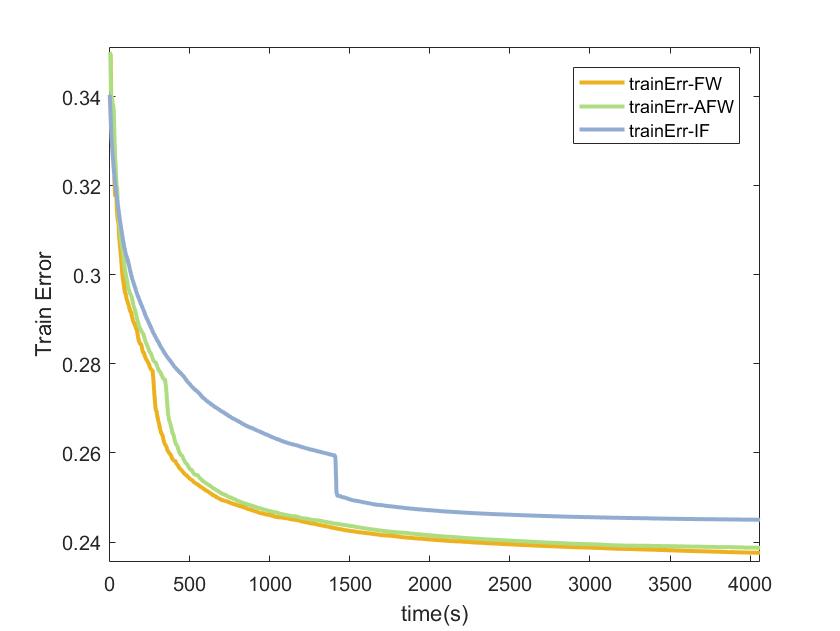}
	\includegraphics[scale=0.191]{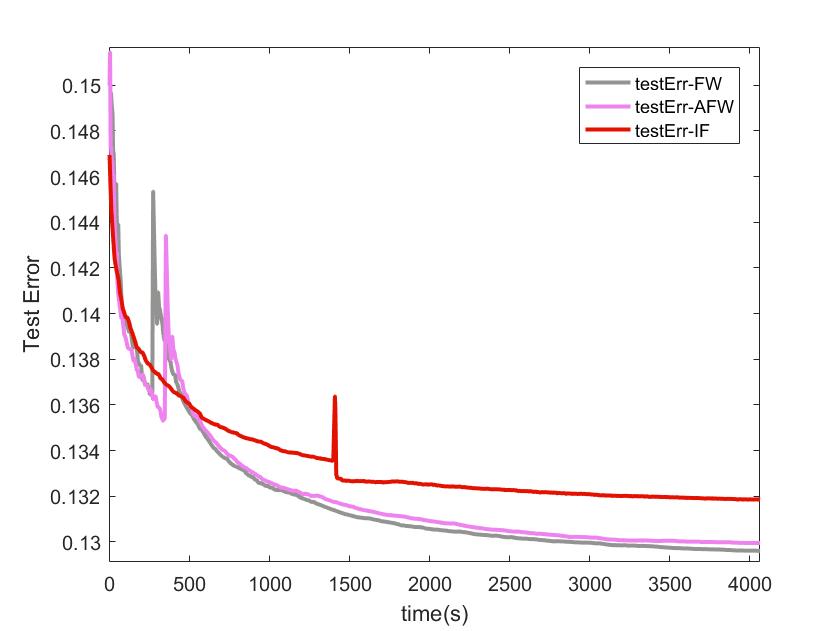}
\end{figure}

\captionsetup[figure]{labelfont={color=blueblue},font={color=blueblue}}
\begin{figure}[h]
	\caption{Training error and test error of \textbf{IF}-$ (0, \infty)$, \textbf{FW}$_{\bf ncvx}$ and \textbf{AFW}$_{\bf ncvx}$ for {\sf MovieLens32M}. The figures in the first, second and third row correspond to  $\mu = 0.25$, $0.5$ and $0.75$, respectively.}
	\label{fig:32M}
	\centering
	\includegraphics[scale=0.191]{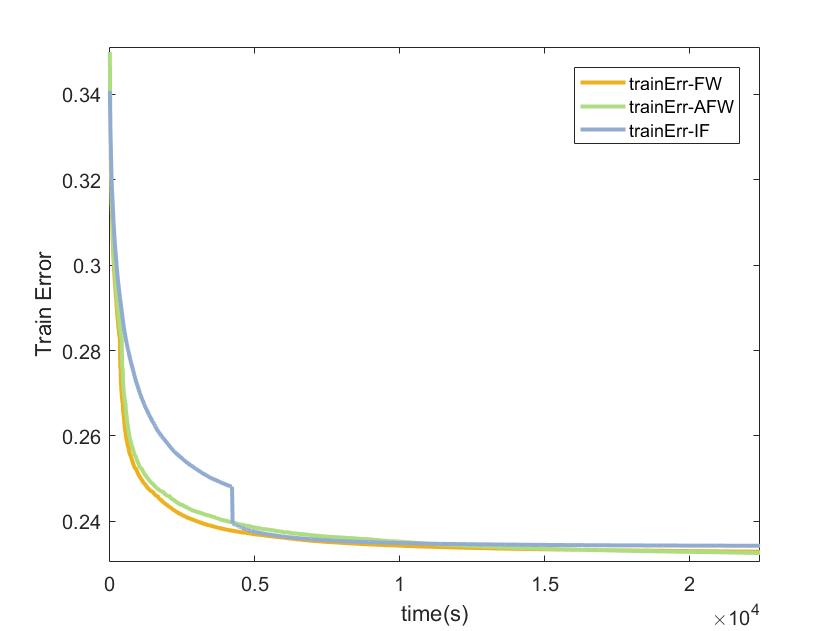}
	\includegraphics[scale=0.191]{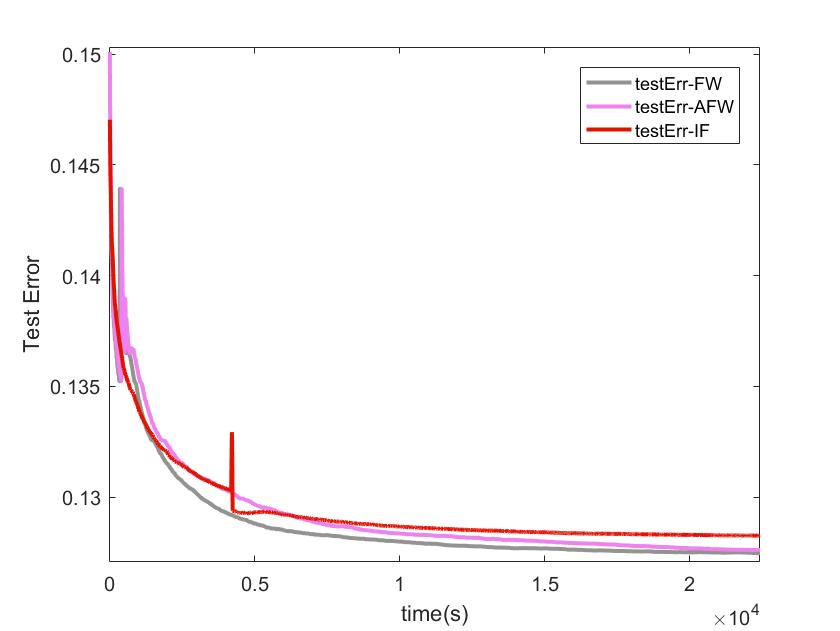}

	\includegraphics[scale=0.191]{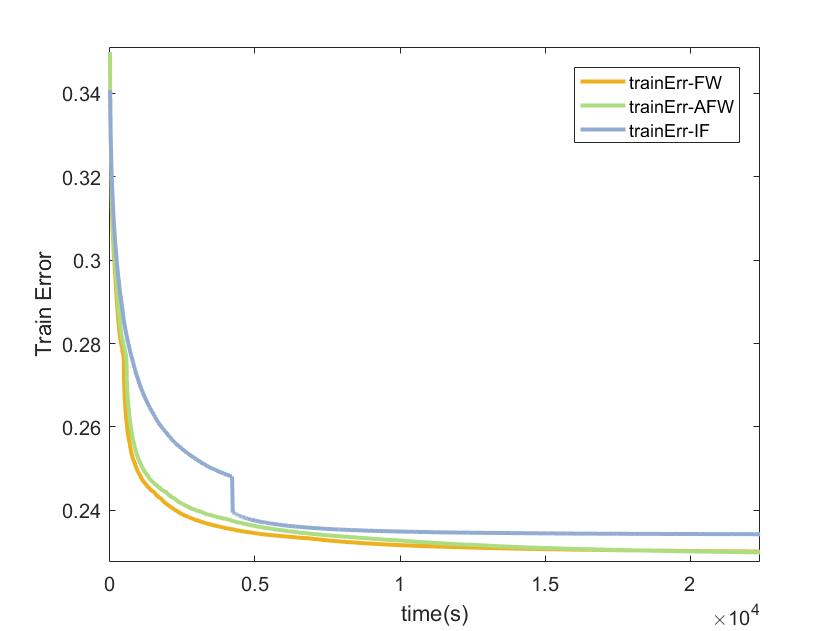}
	\includegraphics[scale=0.191]{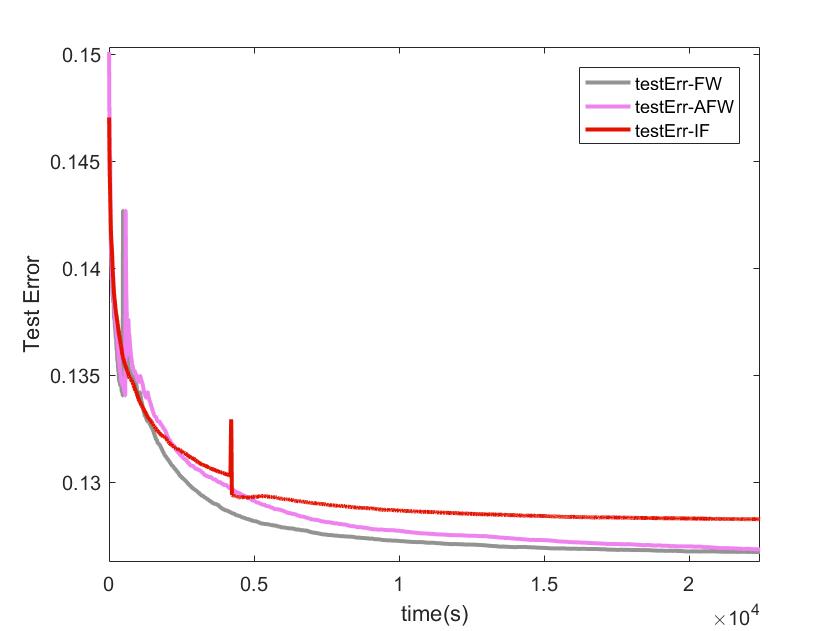}

    \includegraphics[scale=0.191]{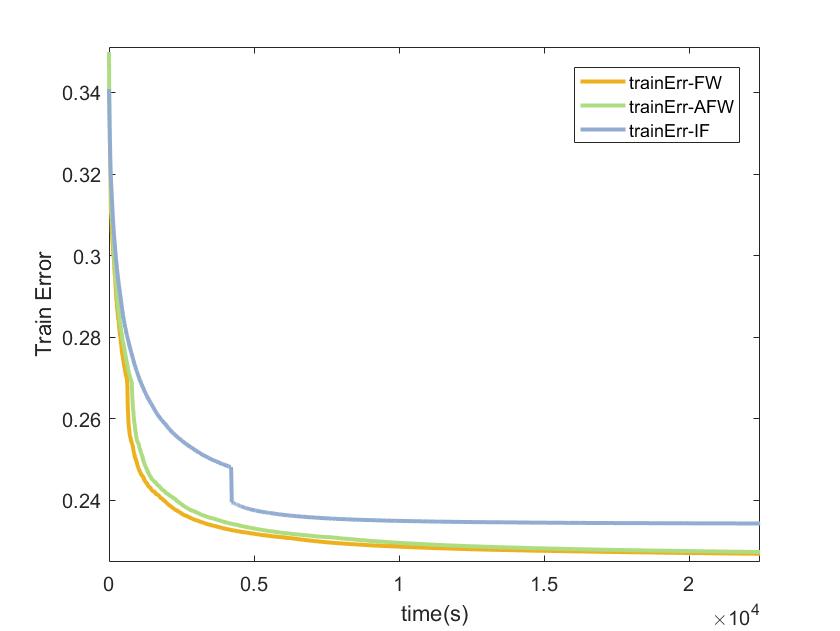}
	\includegraphics[scale=0.191]{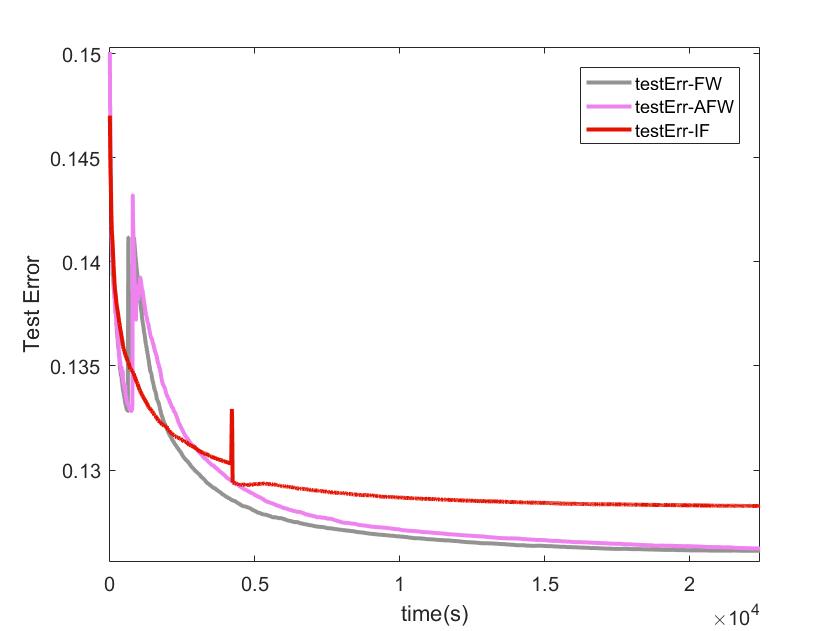}
\end{figure}

\captionsetup[figure]{labelfont={color=blueblue},font={color=blueblue}}
\begin{figure}[h]
	\caption{Training error and test error of \textbf{IF}-$ (0, \infty)$, \textbf{FW}$_{\bf ncvx}$ and \textbf{AFW}$_{\bf ncvx}$ for {\sf Netflix Prize}. The figures in the first row, the second row and the third row correspond to  $\mu = 0.25$, $0.5$ and $0.75$, respectively.}
	\label{fig:Netflix}
	\centering
	\includegraphics[scale=0.191]{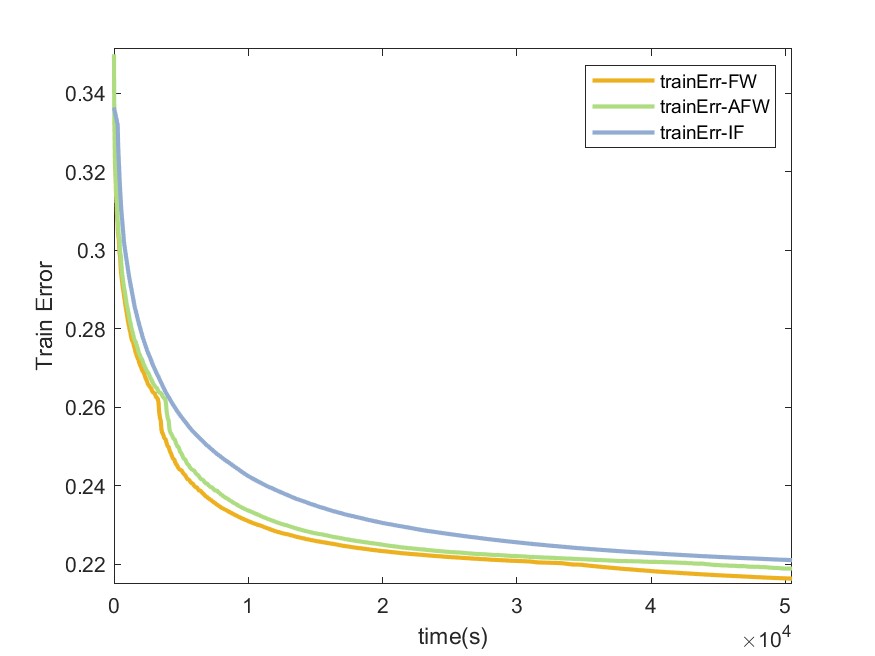}
	\includegraphics[scale=0.191]{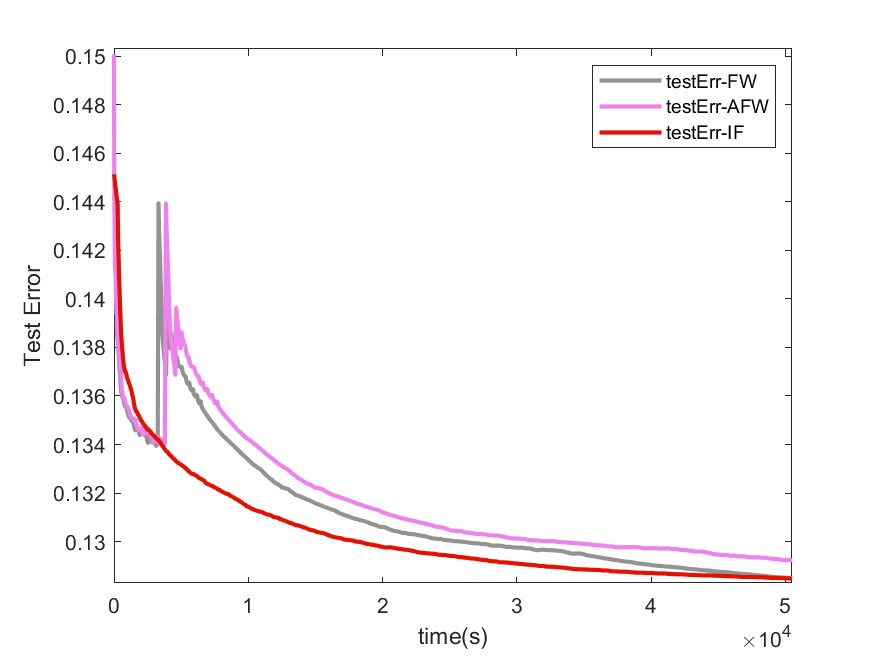}

	\includegraphics[scale=0.191]{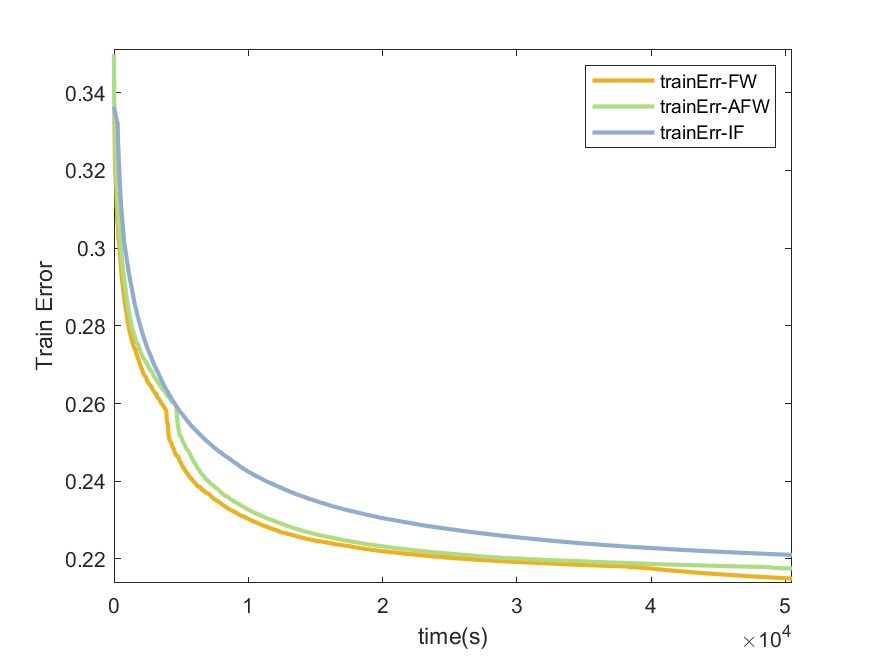}
	\includegraphics[scale=0.191]{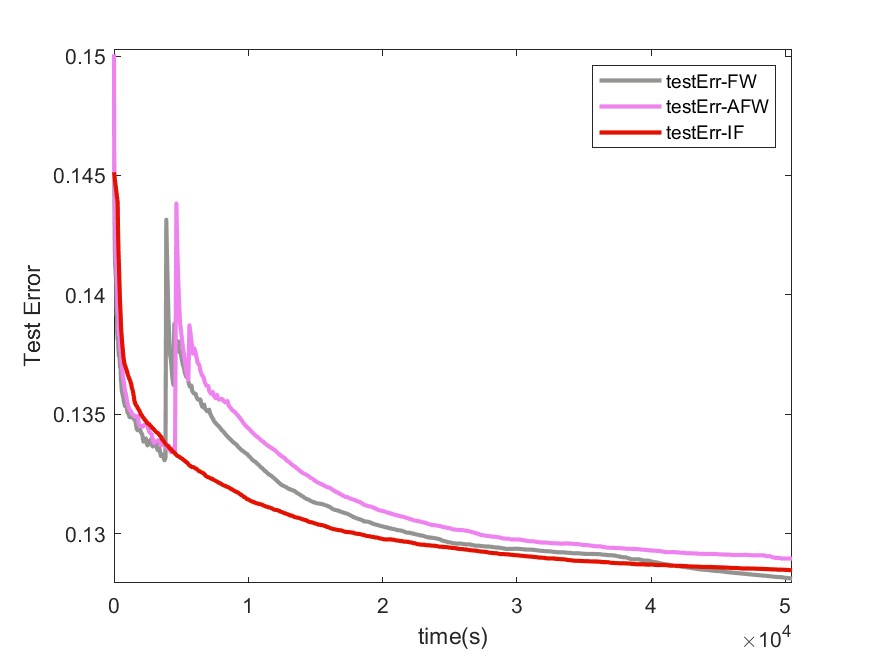}

    \includegraphics[scale=0.191]{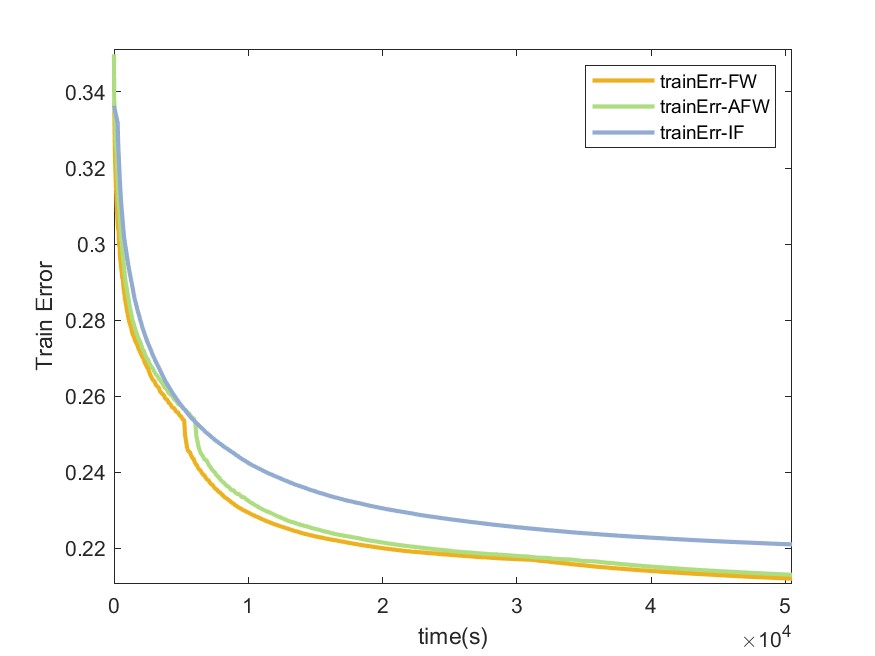}
	\includegraphics[scale=0.191]{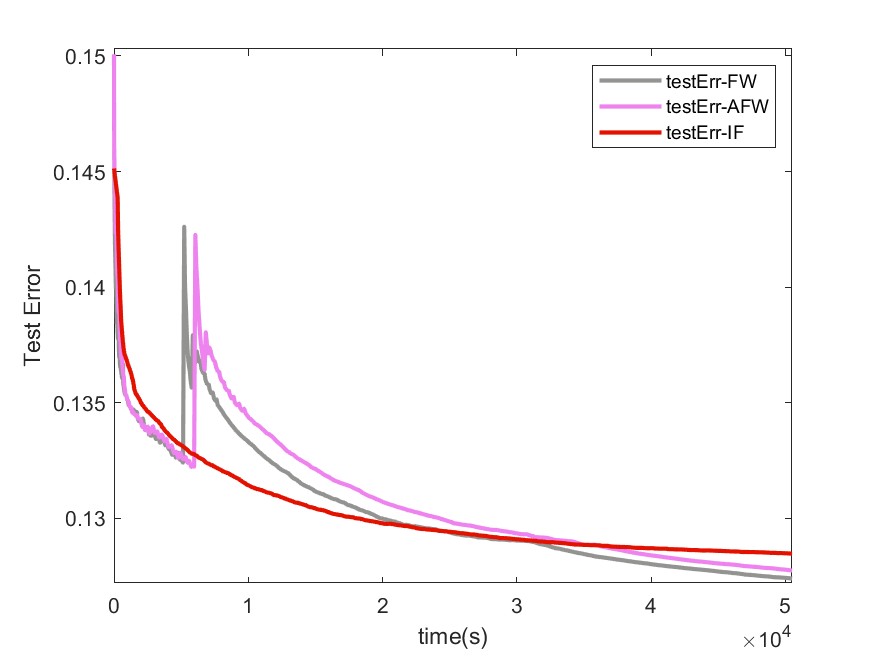}
\end{figure}


\begin{sidewaystable}
	\captionsetup{labelfont={color=blue},font={color=blue}}
	\caption{Relative optimality measure ($ \widehat\varepsilon $), Rank and RMSE for \textbf{IF}-$ (0, \infty)$, \textbf{FW}$_{\bf ncvx}$ and \textbf{AFW}$_{\bf ncvx}$ within different maximal computational time $ T^{\max} $ {\color{blueblue}for different $\mu$}} \label{table:1}
	{ \color{blueblue}
\setlength{\tabcolsep}{0.42mm}
{
		\begin{tabular}{c ccc ccc ccc ccc ccc ccc ccc}
			\toprule
             & & & & \multicolumn{6}{c}{$\mu=0.25$} & \multicolumn{6}{c}{$\mu=0.5$} & \multicolumn{6}{c}{$\mu=0.75$} \\
             \cmidrule(lr){5-10} \cmidrule(lr){11-16} \cmidrule(lr){17-22}
			& \multicolumn{3}{c}{\textbf{IF-$ (0, \infty)$}} & \multicolumn{3}{c}{\textbf{FW}$_{\bf ncvx}$} & \multicolumn{3}{c}{\textbf{AFW}$_{\bf ncvx}$}
                                                             & \multicolumn{3}{c}{\textbf{FW}$_{\bf ncvx}$} & \multicolumn{3}{c}{\textbf{AFW}$_{\bf ncvx}$}
                                                             & \multicolumn{3}{c}{\textbf{FW}$_{\bf ncvx}$} & \multicolumn{3}{c}{\textbf{AFW}$_{\bf ncvx}$}\\
			\cmidrule(lr){2-4} \cmidrule(lr){5-7} \cmidrule(lr){8-10}  \cmidrule(lr){11-13} \cmidrule(lr){14-16}  \cmidrule(lr){17-19} \cmidrule(lr){20-22}
			\multicolumn{22}{c}{{\sf MovieLens10M}  Dataset} \\[2pt]
            \cmidrule(lr){2-4} \cmidrule(lr){5-7} \cmidrule(lr){8-10}  \cmidrule(lr){11-13} \cmidrule(lr){14-16}  \cmidrule(lr){17-19} \cmidrule(lr){20-22}
			$ T^{\max}(s) $ &  $ \widehat\varepsilon $ &     rank &     RMSE &   $ \widehat\varepsilon $ &     rank &     RMSE &   $ \widehat\varepsilon $ &     rank &     RMSE
                                                                             &   $ \widehat\varepsilon $ &     rank &     RMSE &   $ \widehat\varepsilon $ &     rank &     RMSE
                                                                             &   $ \widehat\varepsilon $ &     rank &     RMSE &   $ \widehat\varepsilon $ &     rank &     RMSE \\
			\cmidrule(lr){2-4} \cmidrule(lr){5-7} \cmidrule(lr){8-10}  \cmidrule(lr){11-13} \cmidrule(lr){14-16}  \cmidrule(lr){17-19} \cmidrule(lr){20-22}
1000 &  1.2e-02 &    128 & 0.8090 &  7.8e-03 &    191 & 0.8063 &   1.2e-02 &     90 & 0.8070&  8.4e-03 &    188 & 0.8037 &   1.2e-02 &     94 & 0.8045&  9.2e-03 &    188 & 0.8016 &   1.0e-02 &     96 & 0.8028  \\
2000 &  5.2e-03 &    144 & 0.8083 &  4.1e-03 &    283 & 0.8055 &   4.7e-03 &    117 & 0.8060&  4.2e-03 &    283 & 0.8030 &   4.8e-03 &    114 & 0.8035&  4.9e-03 &    281 & 0.8007 &   5.0e-03 &    116 & 0.8013  \\
3000 &   3.3e-03 &    147 & 0.8081 &  3.8e-03 &    355 & 0.8052 &   3.6e-03 &    133 & 0.8054&  3.5e-03 &    356 & 0.8027 &   1.0e-02 &    {\bf 127} & 0.8029&  3.5e-03 &    349 & {\bf 0.8004} &   3.9e-03 &    128 & 0.8006  \\
		\cmidrule(lr){2-4} \cmidrule(lr){5-7} \cmidrule(lr){8-10}  \cmidrule(lr){11-13} \cmidrule(lr){14-16}  \cmidrule(lr){17-19} \cmidrule(lr){20-22}
		\end{tabular}
	}

\setlength{\tabcolsep}{0.42mm}
{
		\begin{tabular}{c ccc ccc ccc ccc ccc ccc ccc}
			\multicolumn{22}{c}{{\sf MovieLens20M}  Dataset} \\[2pt]
			\cmidrule(lr){2-4} \cmidrule(lr){5-7} \cmidrule(lr){8-10}  \cmidrule(lr){11-13} \cmidrule(lr){14-16}  \cmidrule(lr){17-19} \cmidrule(lr){20-22}
			$ T^{\max}(s) $ &  $ \widehat\varepsilon $ &     rank &     RMSE &   $ \widehat\varepsilon $ &     rank &     RMSE &   $ \widehat\varepsilon $ &     rank &     RMSE
                                                                             &   $ \widehat\varepsilon $ &     rank &     RMSE &   $ \widehat\varepsilon $ &     rank &     RMSE
                                                                             &   $ \widehat\varepsilon $ &     rank &     RMSE &   $ \widehat\varepsilon $ &     rank &     RMSE \\
			\cmidrule(lr){2-4} \cmidrule(lr){5-7} \cmidrule(lr){8-10}  \cmidrule(lr){11-13} \cmidrule(lr){14-16}  \cmidrule(lr){17-19} \cmidrule(lr){20-22}
1000 &  1.8e-01 &     83 & 0.8032 &  3.3e-02 &    117 & 0.8003 &   3.6e-02 &     99 & 0.8023&  3.5e-02 &    114 & 0.7994 &   3.7e-02 &    101 & 0.8006&  3.7e-02 &    118 & 0.7980 &   4.0e-02 &    102 & 0.7988  \\
7000 &  8.1e-03 &    223 & 0.7954 &  7.5e-03 &    373 & 0.7929 &   8.6e-03 &    165 & 0.7936&  7.8e-03 &    365 & 0.7907 &   8.5e-03 &    321 & 0.7910&  8.2e-03 &    369 & 0.7884 &   9.8e-03 &    163 & 0.7893  \\
13000 &  4.5e-03 &    231 & 0.7950 &  4.2e-03 &    504 & 0.7925 &   6.3e-03 &    197 & 0.7927&  4.5e-03 &    494 & 0.7901 &   6.6e-03 &    331 & 0.7903&  4.5e-03 &    494 & {\bf 0.7879} &   4.5e-03 &   {\bf 196} & 0.7881  \\
		\cmidrule(lr){2-4} \cmidrule(lr){5-7} \cmidrule(lr){8-10}  \cmidrule(lr){11-13} \cmidrule(lr){14-16}  \cmidrule(lr){17-19} \cmidrule(lr){20-22}
		\end{tabular}
	}

\setlength{\tabcolsep}{0.42mm}
{
		\begin{tabular}{c ccc ccc ccc ccc ccc ccc ccc}
			\multicolumn{22}{c}{{\sf MovieLens32M}  Dataset} \\[2pt]
			\cmidrule(lr){2-4} \cmidrule(lr){5-7} \cmidrule(lr){8-10}  \cmidrule(lr){11-13} \cmidrule(lr){14-16}  \cmidrule(lr){17-19} \cmidrule(lr){20-22}
			$ T^{\max}(s) $ &  $ \widehat\varepsilon $ &     rank &     RMSE &   $ \widehat\varepsilon $ &     rank &     RMSE &   $ \widehat\varepsilon $ &     rank &     RMSE
                                                                             &   $ \widehat\varepsilon $ &     rank &     RMSE &   $ \widehat\varepsilon $ &     rank &     RMSE
                                                                             &   $ \widehat\varepsilon $ &     rank &     RMSE &   $ \widehat\varepsilon $ &     rank &     RMSE \\
			\cmidrule(lr){2-4} \cmidrule(lr){5-7} \cmidrule(lr){8-10}  \cmidrule(lr){11-13} \cmidrule(lr){14-16}  \cmidrule(lr){17-19} \cmidrule(lr){20-22}
10000 &  2.3e-02 &    230 & 0.7844 &  1.2e-02 &    328 & 0.7823 &   1.7e-02 &    187 & 0.7834&  1.3e-02 &    332 & 0.7801 &   1.8e-02 &    188 & 0.7815&  1.3e-02 &    334 & 0.7787 &   1.6e-02 &    281 & 0.7797  \\
30000 &  5.7e-03 &    291 & 0.7830 &  5.9e-03 &    555 & 0.7806 &   8.8e-03 &    242 & 0.7808&  6.2e-03 &    554 & 0.7783 &   6.0e-03 &    232 & 0.7785&  6.2e-03 &    551 & 0.7762 &   7.6e-03 &    493 & 0.7764  \\
50000 &  4.1e-03 &    294 & 0.7827 &  4.8e-03 &    680 & 0.7802 &   3.0e-03 &    265 & 0.7802&  4.9e-03 &    679 & 0.7778 &   4.3e-03 &   {\bf 261} & 0.7778&  5.4e-03 &    676 & {\bf 0.7756} &   5.7e-03 &    616 & 0.7759  \\
		\cmidrule(lr){2-4} \cmidrule(lr){5-7} \cmidrule(lr){8-10}  \cmidrule(lr){11-13} \cmidrule(lr){14-16}  \cmidrule(lr){17-19} \cmidrule(lr){20-22}
		\end{tabular}
	}

\setlength{\tabcolsep}{0.42mm}
{
		\begin{tabular}{c ccc ccc ccc ccc ccc ccc ccc}
			\multicolumn{22}{c}{{\sf Netflix Prize}  Dataset} \\[2pt]
			\cmidrule(lr){2-4} \cmidrule(lr){5-7} \cmidrule(lr){8-10}  \cmidrule(lr){11-13} \cmidrule(lr){14-16}  \cmidrule(lr){17-19} \cmidrule(lr){20-22}
			$ T^{\max}(s) $ &  $ \widehat\varepsilon $ &     rank &     RMSE &   $ \widehat\varepsilon $ &     rank &     RMSE &   $ \widehat\varepsilon $ &     rank &     RMSE
                                                                             &   $ \widehat\varepsilon $ &     rank &     RMSE &   $ \widehat\varepsilon $ &     rank &     RMSE
                                                                             &   $ \widehat\varepsilon $ &     rank &     RMSE &   $ \widehat\varepsilon $ &     rank &     RMSE \\
			\cmidrule(lr){2-4} \cmidrule(lr){5-7} \cmidrule(lr){8-10}  \cmidrule(lr){11-13} \cmidrule(lr){14-16}  \cmidrule(lr){17-19} \cmidrule(lr){20-22}
10000 &  3.5e-01 &    131 & 0.8575 &  6.6e-02 &    144 & 0.8640 &   7.4e-02 &    131 & 0.8667&  7.0e-02 &    147 & 0.8636 &   7.8e-02 &    134 & 0.8673&  7.7e-02 &    149 & 0.8638 &   9.0e-02 &    136 & 0.8673  \\
30000 &  1.6e-01 &    298 & 0.8498 &  4.3e-02 &    271 & 0.8520 &   4.6e-02 &    230 & 0.8532&  4.5e-02 &    270 & 0.8507 &   4.6e-02 &    240 & 0.8520&  4.7e-02 &    274 & 0.8497 &   4.8e-02 &    246 & 0.8507  \\
50000 &  1.1e-01 &    419 & 0.8479 &  2.8e-02 &    357 & 0.8479 &   3.7e-02 &   {\bf 271} & 0.8503&  3.1e-02 &    356 & 0.8468 &   4.2e-02 &    291 & 0.8494&  2.9e-02 &    360 & {\bf 0.8443} &   3.1e-02 &    324 & 0.8455  \\
			\bottomrule
		\end{tabular}
	}
}
\end{sidewaystable}


\section{Conclusion and outlook}
{\color{blue} In this paper, we proposed a new Frank-Wolfe-type method for minimizing a smooth function over a compact set defined as the level set of a single DC function. The proposed method relies on new generalized linear-optimization oracles that can be efficiently computed for several important nonconvex optimization models arising from compressed sensing and matrix completion. To improve its numerical performance empirically, we also introduced an away-step variant of the proposed method. We analyzed the convergence of these new methods. Finally, we applied the proposed FW-type method and its ``away-step" variant to solve a large-scale matrix completion problem on some standard datasets, and compared them with a popular method \cite{FrGM17} in the literature.

Our results suggest the following interesting future research directions.
\begin{itemize}
\item First, as in most of the existing literature of Frank-Wolfe-type methods, our construction of the proposed methods requires the feasible sets of the underlying optimization problem to be compact. While the compactness assumption is natural, it may limit the applicability of the proposed methods from a wider range of applications. In the very recent work \cite{WLM2022}, the authors proposed a new variant of Frank-Wolfe method which can be applied to {\em convex models with non-compact feasible regions}. It is an interesting future research direction to see how our current work can be extended to allow noncompact constraint sets.
    \item Second, in several important contributions including \cite{CPS18,LuZhou2019,PRA17,R2014}, the authors have established algorithms which can converge to a stronger version of stationary points, called {\em d-stationary points}, for a general optimization problems with DC structure. The d-stationary points do not only enjoy nice theoretical properties \cite{CCHP20} but also often lead to better solution qualities \cite{LuZhou2019}. Therefore, it is also of great interest to see how one could combine the techniques in these articles to construct Frank-Wolfe-type algorithms which converge to d-stationary points.
\end{itemize} }

{\color{blue}
\paragraph{Statements and declarations:} The authors declare that there are no competing interests.
}

\end{document}